\documentclass{amsart}
\usepackage{mathtools, amsmath, amssymb, amsthm, url, enumerate}
\usepackage{tikz}
\usetikzlibrary{cd}

\usepackage[margin=25mm]{geometry}
 
\usepackage[all, cmtip]{xy}
\usepackage[hypertexnames=false]{hyperref}
 
\newcommand{\R}{\mathbb{R}}
\newcommand{\Z}{\mathbb{Z}}

\newcommand{\vol}{\mathrm{vol}}
\newcommand{\Clin}{C^{\infty}_{\mathrm{lin}}}
\newcommand{\X}{\mathcal{X}}

\DeclareMathOperator{\D}{\mathcal{D}}

\DeclareMathOperator{\Path}{Path}

\DeclareMathOperator{\pr}{pr}
\DeclareMathOperator{\ad}{ad}

\DeclareMathOperator{\id}{id}
\DeclareMathOperator{\evl}{ev}

\DeclareMathOperator{\Ker}{Ker}
\DeclareMathOperator{\colim}{colim}

\theoremstyle{definition}
 
\newtheorem{thm}{Theorem}[section]
\newtheorem{defn}[thm]{Definition}
\newtheorem{rem}[thm]{Remark}
\newtheorem{prop}[thm]{Proposition}
\newtheorem{lem}[thm]{Lemma}

\newtheorem{ex}[thm]{Example}
\newtheorem{prob}[thm]{Open Question}
\newtheorem*{thm*}{Theorem}
\newtheorem*{defn*}{Definition}

\newtheorem{notation}[thm]{Notation}
\newtheorem{convention}[thm]{Convention}
 
\title{On Riemannian Geometry in Elastic Diffeology}
\author[Y. Shiobara]{Yusuke Shiobara}
\address{%
  Department of Science and Technology,
  Graduate School of Medicine, Science and Technology,
  Shinshu University,
  Matsumoto, Nagano 390-8621, Japan
}
\email{24hs603d@shinshu-u.ac.jp}

\subjclass[2020]{Primary 58A40; Secondary 18F15, 18F40, 53C05, 53C22, 58D15, 58B20}
\keywords{diffeology, metric, tangent functor, connection, mapping space.}

\begin{document}
\begin{abstract}
  Competing notions of tangent space have hindered the development of
  Riemannian geometry on diffeological spaces. On Blohmann's elastic
  diffeological spaces, however, the tangent functor carries a genuine
  tangent structure in the sense of Cockett and Cruttwell, and we use this
  to develop Riemannian geometry.
  We introduce connections in this setting and show that they induce
  covariant derivatives on vector fields satisfying the usual axioms,
  together with the associated curvature tensor. Given a Riemannian metric,
  we establish a Koszul formula and prove that the Levi-Civita covariant
  derivative is unique if it exists; its existence remains open in general.
  Geodesics are always critical paths of the energy functional, and on
  elastic spaces for which the real line is a curve object, completeness of
  the geodesic spray yields existence and uniqueness of geodesics, together
  with the converse that critical paths are geodesics.
  As our principal example, we show that for a closed manifold and an
  elastic Riemannian diffeological space, these structures on the mapping
  space between them are obtained pointwise, under a tangent-commuting
  hypothesis that holds automatically in the manifold case. In particular,
  the geodesic spray of the mapping space is complete whenever that of the
  target is.
\end{abstract}
\maketitle

\section{Introduction}\label{sec:intro}

Diffeology, introduced by Souriau and systematically developed by Iglesias-Zemmour~\cite{PIZ12}, is a natural generalization of smooth manifolds. The category of diffeological spaces contains that of smooth manifolds as a full subcategory while enjoying closure properties that the latter lacks: it is complete and cocomplete, and mapping spaces are again diffeological spaces. It therefore provides a broad and flexible framework that naturally accommodates singular spaces and infinite-dimensional manifolds.

On this foundation, the topological side of the theory is by now well developed: differential forms and de~Rham theory, sheaf-theoretic methods, and homotopy theory~\cite{Kihara} all admit satisfactory extensions to diffeological spaces. Differential geometry, by contrast, has not yet reached a comparable stage, and the same is true of Riemannian geometry. One important reason is that several inequivalent notions of tangent space have been proposed for diffeological spaces, and these do not generally agree~\cite{Taho}. As a consequence, geometric constructions that depend on a tangent structure lack a canonical starting point.

Two recent developments have substantially advanced the study of Riemannian geometry on diffeological spaces. Iglesias-Zemmour~\cite{PIZ25} proposed a notion of Riemannian metric that avoids tangent spaces altogether, thereby circumventing the ambiguity arising from competing tangent-space constructions. Subsequently, Kuribayashi, Sakai, and the author~\cite{KSS} showed that this metric theory admits a natural tangent-space interpretation through Blohmann's tangent functor~\cite{B23}, constructed as a colimit of the tangent bundles of domains. This provides a tangent-space interpretation of the metric of Iglesias-Zemmour and places it within a broader categorical framework.

While these developments provide satisfactory notions of Riemannian metric, they do not yet furnish the full apparatus of Riemannian geometry. Connections, curvature, geodesics, and variational principles require not merely a tangent functor but a tangent structure in the sense of tangent category theory, compatible with addition, scalar multiplication, and the canonical flip. Blohmann's elastic diffeological spaces~\cite{B23} were introduced precisely so that his tangent functor carries such a structure.

Riemannian geometry on diffeological spaces has also been studied by Goldammer and Welker~\cite{GW}. Motivated by applications to optimization, they introduced a Riemannian metric and a Levi-Civita connection on Vincent's tangent spaces~\cite{V}, a construction distinct from Blohmann's tangent functor. Their Levi-Civita connection is formulated directly as a covariant derivative on vector fields, whereas our approach starts from a connection map in the sense of tangent category theory and derives the corresponding covariant derivative. Nevertheless, the covariant derivative induced by our connection satisfies the same fundamental axioms as in the classical theory (see Propositions~\ref{prop:torsion-free-bracket}, \ref{prop:covariant-derivative-properties}, and \ref{prop:metric-compatibility-vf}). Thus, one major difference between the two frameworks lies in their underlying tangent-space constructions. A detailed comparison is left for future work. We note, however, that in their setting as well, only the uniqueness of the Levi-Civita connection is established, whereas its existence remains open.

The purpose of this paper is to develop Riemannian geometry on elastic diffeological spaces---Levi-Civita connections, curvature, geodesics, and the variational theory of the energy functional---within the framework of the tangent categories of Cockett and Cruttwell~\cite{CC14,CC17}.

More precisely, we introduce a notion of connection on elastic diffeological spaces following Cockett--Cruttwell~\cite{CC17}, supplemented by two scalar axioms reflecting the $\R$-multiplication available in elastic diffeology (Remark~\ref{rem:scalar-axiom}). We show that such a connection induces a covariant derivative on vector fields enjoying the familiar properties from classical differential geometry: $C^\infty(M)$-linearity in the first argument, the Leibniz rule, and, in the Levi-Civita case, metric compatibility and torsion-freeness.

Given a Riemannian metric, we establish a Koszul formula (Proposition~\ref{prop:koszul-formula}) and prove that the Levi-Civita covariant derivative is unique if it exists (Theorem~\ref{thm:uniqueness-cov-deriv}). In contrast with the classical situation, the existence of a Levi-Civita connection is is not known in general (see Open Question~\ref{prob:open-questions}).

We further develop the geodesic and variational theory. Here a genuinely infinite-dimensional phenomenon arises: $\mathsf{Elast}$ contains the category of convenient vector spaces, in which smooth dynamical systems may fail to have unique solutions, so that $\R$ is not a curve object in the sense of Cockett--Cruttwell--Lemay~\cite{CCL} in $\mathsf{Elast}$. To isolate the setting in which uniqueness is recovered, we introduce the full subcategory $\mathsf{Elast}_{\mathrm{curve}}$ of elastic spaces for which $\R$ is a curve object (Definition~\ref{def:elast-lc}). Membership amounts to the uniqueness of solutions of parametrized dynamical systems---the categorical remnant of the Picard--Lindel\"of theorem---and every finite-dimensional or Banach smooth manifold belongs to this subcategory (see \cite[Example 4.2, Exam@le 4.4]{CCL} and \cite[Section 4.1]{M}).

On $\mathsf{Elast}_{\mathrm{curve}}$, completeness of the geodesic spray yields existence and uniqueness of geodesics (Theorem~\ref{thm:geodesic-existence}). The associated geodesic flow plays the role of an exponential map and realizes variation fields (Lemma~\ref{lem:realization}).

We then establish the variational theory. Geodesics are always critical paths of the energy functional (Proposition~\ref{prop:geodesic-critical-path}), and on $\mathsf{Elast}_{\mathrm{curve}}$, for a full connection whose vertical part is Levi-Civita and whose geodesic spray is complete, the converse also holds (Proposition~\ref{prop:critical-geodesic}). We furthermore prove the first variation formula (Lemma~\ref{lem:first-variation-formula}), from which it follows that length-minimizing paths are geodesics after constant-speed reparametrization (Proposition~\ref{prop:minimizer-geodesic}).

The curve-object hypothesis is robust in our main class of examples. In particular, for mapping spaces we obtain existence and uniqueness of geodesics on $C^\infty(M,N)$ whenever $N \in \mathsf{Elast}_{\mathrm{curve}}$, without deciding whether $C^\infty(M,N)$ itself belongs to $\mathsf{Elast}_{\mathrm{curve}}$ (Remark~\ref{rem:mapping-space-lc-membership}).

Finally, mapping spaces provide a large and natural class of examples. For a closed manifold $M$ and an elastic Riemannian diffeological space $N$, we show that the tangent structure, connections, Riemannian metrics, Levi-Civita connections, curvature, and geodesics on $C^\infty(M,N)$ are obtained pointwise from those on $N$ under a natural tangent-commuting hypothesis, automatic when $N$ is a manifold. In particular, the geodesic spray of $C^\infty(M,N)$ is complete whenever that of $N$ is (Theorem~\ref{thm:summary-of-mapping-space}).

\subsection*{Organization}
The paper is organized as follows. Section~\ref{sec:tangent-structure} reviews Blohmann's tangent functor and the elastic axioms and establishes the basic differential-geometric tools used throughout the paper. Section~\ref{sec:connections} develops the theory of connections and covariant derivatives on elastic diffeological spaces. Section~\ref{sec:riemannian-metric} introduces Riemannian metrics and Levi-Civita structures and proves the corresponding uniqueness results. Section~\ref{sec:curvature} recalls the curvature of a vertical connection from \cite{CC17} and records its basic properties in the elastic setting, with a view toward future applications. Section~\ref{sec:geodesics} develops the geodesic and variational theory. Section~\ref{sec:mapping-space} treats mapping spaces as a fundamental class of examples. Section~\ref{sec:perspectives} discusses directions for future research. Appendix~\ref{sec:tangent-categories} collects the necessary background on tangent categories.

\subsection*{Setting and conventions}
Throughout the paper, $\mathsf{Elast}$ denotes Blohmann's category of elastic diffeological spaces~\cite{B23} and $T$ his tangent functor, which equips $\mathsf{Elast}$ with the structure of a Cartesian tangent category (Section~\ref{sec:tangent-structure}). 
Smooth manifolds are regarded as elastic diffeological spaces via their standard diffeology, and $T$ then restricts to the classical tangent functor. Composition is written in the applicative order, $g \circ f$; note that \cite{CC17, CCL} use the diagrammatic order. Connections are understood in the sense of Cockett--Cruttwell~\cite{CC17}, supplemented by the scalar axioms of Remark~\ref{rem:scalar-axiom}, which reflect the $\R$-module structure of Blohmann's tangent spaces. 
Riemannian metrics are definite by definition (Definition~\ref{def:metric}); where only a weak metric in the sense of \cite{KSS} is needed, we say so explicitly. Standing assumptions specific to a section or subsection --- notably membership in $\mathsf{Elast}_{\mathrm{curve}}$ for the geodesic theory of Section~\ref{sec:geodesics}, and the tangent-commuting hypothesis for mapping spaces in Section~\ref{sec:mapping-space} --- are stated at the head of the relevant part.

\subsection*{Acknowledgement}
I would like to express my sincere gratitude to my supervisor, Keiichi Sakai, for his valuable guidance and continuous support since my undergraduate years. I am also grateful to my sub-advisor, Katsuhiko Kuribayashi, for many insightful discussions and helpful advice. I would like to thank David Miyamoto for his generous support and encouragement.

This work was supported by JST SPRING, Grant Number JPMJSP2144, at Shinshu University.

\subsection*{Declaration of generative AI in the writing process}
During the preparation of this manuscript, the author used ChatGPT and Claude for language editing and for discussing possible formulations, presentations, and future research directions. All mathematical results, proofs, and scientific conclusions were developed and verified by the author. The author carefully reviewed and revised all AI-generated suggestions and takes full responsibility for the content of this publication.

\section{Tangent structure on diffeological spaces}\label{sec:tangent-structure}

\subsection{Elastic Diffeological Spaces}

Throughout this paper, $\mathsf{Diff}$ denotes the category of diffeological spaces and
smooth maps; we assume familiarity with plots, inductions, subductions, and the
D-topology \cite{PIZ12}. The purpose of this subsection is to fix notation for
Blohmann's diffeological tangent functor and for the class of \emph{elastic}
spaces on which it carries a tangent structure in the sense of
Rosick\'y--Cockett--Cruttwell. All results in this subsection are due to Blohmann
\cite{B23,B24}; we state them without proof, in the form and notation in which
they will be used in Sections~\ref{sec:connections}--\ref{sec:mapping-space}.
A detailed exposition, to which we are indebted, can also be found in
\cite[Section~2.1]{M}.

We denote by $\mathsf{Euc}$ the category whose objects are open subsets of
Euclidean spaces and whose morphisms are smooth maps, and let
$\mathcal{Y}\colon \mathsf{Euc} \to \mathsf{Diff}$ be the full and faithful
functor assigning to each domain its standard diffeology. On $\mathsf{Euc}$ one
has the classical tangent functor
$\hat{T}U = U \times \R^{\dim U}$,
$\hat{T}f(u,v) = (f(u), D_uf(v))$.

\begin{defn}[{\cite[Section~4.1]{B23}}]\label{def:tangent-functor}
For an endofunctor $\hat{F}\colon \mathsf{Euc} \to \mathsf{Euc}$ and a natural
transformation $\hat{\alpha}\colon \hat{F} \to \hat{G}$, set
\begin{equation*}
  \mathbb{L}\hat{F} \coloneqq \mathrm{Lan}_{\mathcal{Y}}\,\mathcal{Y}\hat{F},
  \qquad
  \mathbb{L}\hat{\alpha} \coloneqq
  \mathrm{Lan}_{\mathcal{Y}}\,\mathcal{Y}\hat{\alpha}.
\end{equation*}
The \textbf{tangent functor} $T \colon \mathsf{Diff} \to \mathsf{Diff}$ is the
left Kan extension $T \coloneqq \mathbb{L}\hat{T}$.
\end{defn}

Because $\mathsf{Euc}$ is small and $\mathsf{Diff}$ is cocomplete, these Kan
extensions are computed pointwise: regarding the diffeology $\D(M)$ of a space
$M$ as a category (objects: plots $P\colon U_P \to M$; morphisms: smooth maps
$h\colon U_P \to U_Q$ with $Q \circ h = P$), one has
\begin{equation*}
  (\mathbb{L}\hat{F})M \;=\; \colim_{P \in \D(M)} \hat{F}U_P ,
  \qquad\text{in particular}\qquad
  TM \;=\; \colim_{P \in \D(M)} \bigl(U_P \times \R^{\dim U_P}\bigr).
\end{equation*}

\begin{rem}\label{rem:tangent-structure-2}
We denote by $[P, w]$ the class in $(\mathbb{L}\hat{F})M$ of an element
$w \in \hat{F}U_P$; thus a tangent vector is written $[P,u,v]$ with
$(u,v) \in U_P \times \R^{\dim U_P}$. The identification is the equivalence
relation \emph{generated} by
\begin{equation*}
  [P \circ h,\; w] \;=\; [P,\; \hat{F}h(w)]
  \qquad\text{for every morphism of plots } h ,
\end{equation*}
so that $[P,w] = [P',w']$ precisely when the two representatives are joined by
a finite chain of such elementary moves (in either direction) over
intermediate plots of $M$. On classes, the Kan extension of a natural
transformation acts representative-wise,
$(\mathbb{L}\hat{\alpha})_M[P,w] = [P, \hat{\alpha}_{U_P}(w)]$, and the tangent
map of a smooth map $f \colon M \to N$ is
\begin{equation*}
  T(f)(\,[P,u,v]) = [\,f \circ P,\, u,\, v\,].
\end{equation*}
\end{rem}

The Euclidean tangent bundle carries its usual structure maps
$\hat{\pi}(u,v) = u$, $\hat{0}(u) = (u,0)$, and
$\hat{\kappa}(t,(u,v)) = (u,tv)$, all natural in $U$; their Kan extensions
therefore exist on all of $\mathsf{Diff}$. For $\hat{\kappa}$ a word of caution
is in order, since its source $\R \times \hat{T}$ involves a product: the
identification $\bigl(\mathbb{L}(\R \times \hat{T})\bigr)M \cong \R \times TM$
used below is legitimate because $\mathsf{Diff}$ is Cartesian closed, so that
$\R \times (-)$ is a left adjoint and preserves the colimit defining $TM$. By
contrast, the \emph{fibre} products entering the addition and the flip do not
commute with the colimit; this is the source of the difficulties addressed by
the elasticity axioms below.

\begin{defn}\label{def:nat-trans}
The \textbf{projection} $\pi \coloneqq \mathbb{L}\hat{\pi}$,
\textbf{zero section} $0 \coloneqq \mathbb{L}\hat{0}$, and
\textbf{scalar multiplication} $\kappa \coloneqq \mathbb{L}\hat{\kappa}$ act
on classes by
\begin{equation*}
  \pi_M[P,u,v] = P(u), \qquad
  0_M(x) = [P,u,0] \quad (\text{any } P \text{ with } P(u)=x), \qquad
  \kappa_M(t,[P,u,v]) = [P,u,tv].
\end{equation*}
The value $0_M(x)$ is independent of the chosen plot: if $P(u) = P'(u') = x$,
then both classes agree with $[\,\mathrm{const}_x,\,0,\,0\,]$ by naturality of
$\hat{0}$ applied to the constant plots factoring through $P$ and $P'$.
\end{defn}

The tangent bundle is generated by curves in the following sense; this is the
reason why, in the variational theory of Section~\ref{sec:geodesics}, tangent
vectors to path spaces can be realized by genuine variations.

\begin{defn}\label{def:delta}
  Define $\delta = \delta_M \colon C^{\infty}(\mathcal{Y}\R, M) \to TM$ by
  $\delta(\gamma) \coloneqq [\gamma, 0, 1]$.
\end{defn}

\begin{prop}\label{prop:tangent-structure-1}
  The functor $T$ shares the following properties with the tangent functor on
  manifolds:
  \begin{itemize}
    \item $T$ preserves finite products: the natural map
      $T(M \times N) \to TM \times TN$ is an isomorphism
      {\cite[Corollary~2.2.19]{B24}};
    \item $T$ preserves subductions {\cite[Proposition~2.2.16]{B24}};
    \item $T$ is local: for $U \subseteq M$ D-open, $T$ of the inclusion is an
      induction with D-open image, so $TU$ is an open subspace of $TM$
      {\cite[Proposition~3.35]{B23}};
    \item $\delta_M$ is a subduction {\cite[Proposition~2.2.20]{B24}}.
  \end{itemize}
\end{prop}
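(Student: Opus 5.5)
Since all four properties are attributed to Blohmann, the plan is to reconstruct each one from the colimit description $TM = \colim_{P \in \D(M)} \hat{T}U_P$ and the generating relation of Remark~\ref{rem:tangent-structure-2}. I would prove them in the order: subductions, $\delta_M$, products, locality. The first two follow almost formally from the colimit presentation, and the last two reuse them.

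\textbf{Subductions.} Let $f\colon M\to N$ be a subduction. As a colimit, $TN$ carries the quotient diffeology of $\coprod_{Q\in\D(N)}\hat{T}U_Q$. Hence every plot of $TN$ locally lifts through some $\hat{T}U_Q$. Since $f$ is a subduction, $Q$ locally factors as $Q = f\circ P$ on a smaller open set. Then $[Q,u,v] = T(f)[P,u,v]$, so the plot locally lifts through $Tf$, which is what we need.

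\textbf{$\delta_M$.} Take a class $[P,u,v]$. Consider the curve $\gamma(s) = P(u+sv)$, defined near $0$. I would extend it to all of $\R$ by a cutoff reparametrization that is the identity near $0$ and stays inside $U_P$. Then $[P,u,v] = [\gamma,0,1]$, using the plot morphism $s\mapsto u+sv$. This gives surjectivity. For smooth local lifts, I would do the same construction with $(u,v)$ varying over a parameter, using the exponential-law diffeology on $C^\infty(\R,M)$.

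\textbf{Products.} This is the main obstacle. The comparison map $T(M\times N)\to TM\times TN$ is clearly smooth. It is also surjective: from $[P,u,v]$ and $[Q,u',v']$ we form the plot $P\times Q$ together with the vector $(v,v')$. The work lies in injectivity and in the inverse being smooth. For the inverse, I would define $([P,u,v],[Q,u',v'])\mapsto [P\times Q,(u,u'),(v,v')]$. Well-definedness reduces to elementary moves in each factor separately, and each such move is a move for the product plot via $h\times\id$. Smoothness of this inverse uses the fact that a product of colimits of this shape is again such a colimit, which holds because $\mathsf{Diff}$ is Cartesian closed. Injectivity needs additivity: a plot $R=(R_1,R_2)$ into $M\times N$ factors through $R_1\times R_2$ along the diagonal, and $D\Delta(v) = (v,v)$. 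So $[R,u,v] = [R_1\times R_2,(u,u),(v,v)]$, and this is exactly the class the inverse map produces.

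\textbf{Locality.} For $U\subseteq M$ D-open, I would show that $TU\to TM$ is injective with D-open image $\pi^{-1}(U)$. Openness holds because the preimage of $\pi^{-1}(U)$ under each $\hat{T}U_P$ is $\hat{T}(P^{-1}U)$. For injectivity, a chain of elementary moves in $M$ joining two classes based in $U$ passes through plots whose relevant points lie in $U$, since $\pi$ is preserved along the chain. Restricting each plot to the preimage of $U$ then produces a chain in $U$. The induction property follows by the same restriction argument applied to plots.
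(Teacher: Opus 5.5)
The paper gives no argument for this proposition. It states the four properties and defers each one to Blohmann \cite{B23,B24}. You instead rebuild them from the colimit presentation $TM=\colim_{P\in\D(M)}\hat{T}U_P$, and your reconstruction is correct in substance:
\begin{itemize}
\item \emph{Subductions:} every plot of $TN$ locally factors through some cocone map $T(Q)$, and $Q$ itself lifts locally through the subduction $f$.
\item \emph{$\delta_M$:} the reparametrization $s\mapsto P(u+\phi(s)v)$, with $\phi(0)=0$ and $\phi'(0)=1$, realizes $[P,u,v]$ as $[\gamma,0,1]$.
\item \emph{Products:} the explicit inverse $([P,u,v],[Q,u',v'])\mapsto[P\times Q,(u,u'),(v,v')]$ is well defined and smooth. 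Cartesian closedness is exactly what makes the product of the two colimits a colimit over pairs of plots, which gives smoothness.
\item \emph{Locality:} restricting every plot in a chain of elementary moves to $P_i^{-1}(U)$ gives injectivity and the induction property.
\end{itemize}
Deferring to Blohmann keeps the exposition short and consistent with his general treatment of the Kan extensions $\mathbb{L}\hat F$. Your route is self-contained, and it shows that none of the four properties uses the elasticity axioms (E1)--(E5). That fits the fact that the statement concerns $T$ on all of $\mathsf{Diff}$.

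For the same reason, drop the phrase ``injectivity needs additivity''. No fibrewise addition is used, and none is available without (E1). The diagonal $\Delta\colon U_R\to U_R\times U_R$ is itself a morphism of plots from $R$ to $R_1\times R_2$, so $[R,u,v]=[R_1\times R_2,(u,u),(v,v)]$ is a single elementary move.

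A few details still need filling in.
\begin{itemize}
\item A subduction must be smooth, so check that $\delta_M$ is smooth. For a plot $\Gamma$ of $C^\infty(\R,M)$ one has $[\Gamma(x),0,1]=T(\ad\Gamma)((x,0),(0,1))$, which is smooth in $x$.
\item In the parametrized lift for $\delta_M$, write the plot locally as $T(P)\circ(\beta_1,\beta_2)$. Near each parameter value, choose the cutoff $\phi$ with small enough range that $\beta_1(x)+\phi(s)\beta_2(x)$ stays in a fixed ball inside $U_P$ for all $s\in\R$.
\item Since the claim is about \emph{finite} products, check the terminal case $T(*)\cong *$. Every $[P,u,v]$ over $*$ equals the single class over $\R^0$, via the map $U_P\to\R^0$.
\item In the locality part, the equality of the image of $T(\iota)$ with $\pi_M^{-1}(U)$ uses the same restriction step, $[P,u,v]=[P|_{P^{-1}(U)},u,v]$, and you should say so.
\end{itemize}
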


Two further pieces of structure are needed for a tangent category: the
fibrewise addition, whose Euclidean model $\hat{+}$ is defined on the fibre
product $\hat{T}_2 = \hat{T} \times_{\hat\pi} \hat{T}$, and the canonical flip
$\hat{\tau}(u,v_0,v_1,v_{01}) = (u,v_1,v_0,v_{01})$ on the second tangent
functor $\hat{T}^2$, together with the vertical lift
$\hat{\lambda}(u,v) = (u,0,0,v)$. As announced above, the Kan extension does
\emph{not} interact well with these constructions: writing
$T_k \coloneqq T \times_\pi \cdots \times_\pi T$ for the $k$-fold fibre
product, the spaces $\mathbb{L}\hat{T}_k M$ and $T_k M$, and likewise
$\mathbb{L}(\hat{T}^2)M$ and $T^2M$, differ in general, so
$\mathbb{L}\hat{+}$ and $\mathbb{L}\hat{\tau}$ do not immediately descend to
the intended domains. What always exists are the comparison morphisms
\begin{alignat*}{2}
  \theta_k&\colon \mathbb{L}\hat{T}_k \to T_k , \quad
    & \theta_{k,M}[P, u, v_0, \ldots, v_{k-1}]
      &= \bigl([P, u, v_0], \ldots, [P, u, v_{k-1}]\bigr), \\
  \theta^2&\colon \mathbb{L}\hat{T}^2 \to T^2 , \quad
    & \theta^2_M[P, u, v_0, v_1, v_{01}] &= T^2P\,(u, v_0, v_1, v_{01}), \\
  \nu_k&\colon TT_k \to T^2 \times_T \cdots \times_T T^2 , \quad
    & \nu_{k,M}\bigl[(P_1,\ldots,P_k), w\bigr]
      &= \bigl([P_1, w], \ldots, [P_k, w]\bigr).
\end{alignat*}

\begin{ex}\label{ex:non-elastic}
The comparison maps genuinely fail to be isomorphisms. Let
$M = \R/(\Z/2)$ be the quotient of the real line by the reflection
$t \mapsto -t$, equipped with the quotient diffeology. Every tangent vector of
$M$ at the singular point is represented by a plot factoring through the
quotient map, and the two ``branches'' cannot be separated after passing to the
colimit; as a consequence $\theta_{2,M}$ is not injective, so the fibrewise
addition $\mathbb{L}\hat{+}$ does not descend to $T_2M$
(see \cite[Example~3.11]{M} for the details). This is precisely the phenomenon
that the elasticity axioms exclude.
\end{ex}

Blohmann's elasticity axioms single out exactly the spaces on which these
comparison maps behave as they do for manifolds.

\begin{defn}[{\cite[Definition~4.1]{B23}}]\label{def:tangent-structure-1}
  A diffeological space $M$ is \textbf{elastic} if:
  \begin{itemize}
    \item[(E1)] $\theta_{k,M}$ is an isomorphism for every $k \geq 1$;
    \item[(E2)] there is a morphism $\tau_M \colon T^2M \to T^2M$ (the
      \textbf{canonical flip}) with
      $\tau_M \circ \theta^2_M = \theta^2_M \circ (\mathbb{L}\hat{\tau})_M$;
    \item[(E3)] the \textbf{vertical lift}
      $\lambda_M \coloneqq \theta^2_M \circ (\mathbb{L}\hat{\lambda})_M
      \colon TM \to T^2M$ is an induction;
    \item[(E4)] $\nu_{k,M}$ is injective for every $k \geq 1$;
    \item[(E5)] each iterated space $T_{k_1}\cdots T_{k_n}M$ satisfies
      (E1)--(E4).
  \end{itemize}
  Elastic spaces span a full subcategory
  $\mathsf{Elast} \subseteq \mathsf{Diff}$.
\end{defn}

\begin{thm}[{\cite[Theorem~4.2]{B23}}]\label{thm:tangent-structure-2}
  The functor $T$, the ring $\R$, and the natural transformations $\pi$, $0$,
  $\kappa$, $+ \coloneqq \mathbb{L}\hat{+} \circ \theta_2^{-1}$, the flip
  $\tau$ of \textup{(E2)}, and the lift $\lambda$ of \textup{(E3)} make
  $\mathsf{Elast}$ a Cartesian tangent category with scalar
  $\R$-multiplication.
\end{thm}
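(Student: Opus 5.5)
The strategy is to push every tangent-category axiom down from $\mathsf{Euc}$, where it holds by classical calculus, using three facts. First, structure maps act representative-wise on classes (Remark~\ref{rem:tangent-structure-2}). Second, (E1) and (E4) make the comparison maps $\theta_k$ bijective and $\nu_k$ injective. Third, (E5) makes every iterated space $T_{k_1}\cdots T_{k_n}M$ elastic again.

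\textbf{Step 1: closure and well-definedness.} I first check that $\mathsf{Elast}$ is closed under $T$, under the fibre products $T_k$, and under finite products. Closure under $T$ and $T_k$ is exactly (E5). For finite products I use that $T$ preserves products (Proposition~\ref{prop:tangent-structure-1}) and that the Euclidean comparison maps are compatible with products, so $\theta_k$, $\theta^2$ and $\nu_k$ for $M\times N$ are products of those for $M$ and $N$.

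Next I check that each structure map is a well-defined smooth natural transformation.
\begin{itemize}
  \item $\pi$, $0$ and $\kappa$ are Kan extensions, hence natural and smooth. For $\kappa$ one uses the Cartesian-closedness remark.
  \item $+=\mathbb{L}\hat{+}\circ\theta_2^{-1}$ is smooth because $\theta_2$ is an isomorphism by (E1).
  \item $\tau$ is given by (E2). It is unique and natural because $\theta^2$ is a subduction, which holds since plots of $T^2M$ lift locally through representatives.
  \item $\lambda$ is a composite of natural maps.
\end{itemize}

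\textbf{Step 2: the equational axioms.} These are: the additive bundle axioms for $\pi\colon TM\to M$ (associativity, unit, commutativity, compatibility with $\kappa$), additivity of $T(f)$, $\lambda$ and $\tau$, $\tau^2=\id$, $\tau\circ\lambda=\lambda$, the hexagon $T\tau\circ\tau T\circ T\tau=\tau T\circ T\tau\circ\tau T$, and the $\lambda$--$\tau$ coherences. Each is an equation between maps out of some $T_{k_1}\cdots T_{k_n}M$.

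I verify each one on elements written as images of Euclidean representatives. Using (E1) and (E5), every element of such a domain is $\theta[P,w]$ for a single plot $P$. Naturality of $\theta$ then reduces the equation to the corresponding identity on $\hat T_{k_1}\cdots\hat T_{k_n}U_P$, which holds classically. Applying $\theta$ and the plot $P$ transfers it back.

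\textbf{Step 3: the universality axioms.} These say that $T_2M$ is a pullback preserved by $T^n$, and that the square
\[
\xymatrix{T_2M\ar[r]^{\mu}\ar[d] & T^2M\ar[d]^{T\pi}\\ M\ar[r]^{0} & TM}
\qquad \mu = T(+)\circ(\lambda\times 0T)
\]
is a pullback (universality of the vertical lift).

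Preservation of the fibre products by $T$ amounts to $T(T_kM)\cong (TT)_k$ over $TM$. This is where I use $\nu_k$: it is injective by (E4), surjective by a local lifting argument on plots of $T_kM$ (which are tuples over a common plot by (E1)), and an induction as a comparison of colimit diffeologies.

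For universality of the lift I would show that $\mu$ is an induction onto $(T\pi)^{-1}(0M)$, using (E3) that $\lambda$ is an induction. To recover $v_1$ from an element $\xi$ with $T\pi(\xi)=0$, I would take $v_1=T\pi'(\xi)$, where $\pi'=\pi T$ is the other projection. Then $\xi-\lambda(\cdot)$, formed fibrewise with $T$-addition, has vanishing second component and lies in the image of $\lambda$.

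\textbf{Main obstacle.} I expect this last step to be hardest, namely showing that the diffeology on the kernel of $T\pi$ is the one induced by $\lambda\times 0$. This requires a plot-by-plot decomposition of $T^2M$ through a single Euclidean plot, and therefore needs $\theta^2$ to be locally surjective on plots. I would try to extract that from (E2) together with (E5) applied to $TM$. Finally, Cartesianness follows from Step 1, and the $\R$-scalar structure follows from the $\kappa$ equations checked in Step 2.
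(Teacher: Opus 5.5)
The paper does not prove this statement; it quotes it from Blohmann, so your sketch has to stand on its own. Steps~1--2, which transfer each axiom to Euclidean representatives, are fine in outline. But universality of the vertical lift is not established, and you have put the difficulty in the wrong place.

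The diffeological part is the easy one. Suppose you know that every $\eta\in T^2M$ with $\pi_{TM}(\eta)=0_M(x)=T(\pi_M)(\eta)$ lies in $\lambda_M(T_xM)$. Then the inverse of $\lambda_{2,M}$ on $\Ker(T\pi_M)$ is $\xi\mapsto\bigl(\pi_{TM}\xi,\ \lambda_M^{-1}(\tau_M\xi-T0_M(\pi_{TM}\xi))\bigr)$, with the difference taken in the fibres of $\pi_{TM}$. This is smooth precisely because $\lambda_M$ is an induction by (E3).

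What is missing is that set-level inclusion, which you simply assert (``lies in the image of $\lambda$''). It does not follow from a single representative. For $\eta=T^2P(u,v_0,v_1,v_{01})$ you only know $[P,u,v_0]=[P,u,v_1]=0_M(x)$, not $v_0=v_1=0$. The mixed term $T^2P(u,v_0,v_1,0)$ carries second-order information: for $P=Q\circ h$ with $dh_uv_0=dh_uv_1=0$ it equals $\lambda_M[Q,h(u),d^2h_u(v_0,v_1)]$. Local surjectivity of $\theta^2$ does not help either, since $\theta^2_M$ is a subduction for every diffeological space, as you note yourself in Step~1.

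One way to close the gap uses (E1) and (E4):
\begin{itemize}
\item By injectivity of $\theta_{2,M}$, $[P,u,v_0,v_1]=[P,u,0,0]$ in $\mathbb{L}\hat T_2M$. So a zigzag of elementary moves $(P'\circ h,u',a,b)\sim(P',h(u'),dh_{u'}a,dh_{u'}b)$ joins them.
\item Along this zigzag, the element $T^2P_i(u_i,a_i,b_i,0)$ stays in the double kernel over $x$. At each move it changes, under $+_{TM}$, by $\lambda_M[P',h(u'),d^2h_{u'}(a,b)]$.
\item This step uses $T^2P'(y,A,0,W)=\lambda_M[P',y,W]$ whenever $[P',y,A]=0_M(x)$, and that is where (E4) enters. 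Both sides are images under $T(+_M)$ of tangent vectors to the plot $T(P')\times_{P'}T(P')$ of $T_2M$. These two tangent vectors have the same image $\bigl(0_{TM}(0_M(x)),\lambda_M[P',y,W]\bigr)$ under $\nu_{2,M}$, so they coincide.
\item The zigzag ends at $T^2P(u,0,0,0)=0_{TM}(0_M(x))$, so the mixed term lies in $\operatorname{im}\lambda_M$. Hence so does $\eta=T^2P(u,v_0,v_1,0)+_{TM}\lambda_M[P,u,v_{01}]$, where the second summand is handled by the same identification.
\end{itemize}

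A second gap is surjectivity of $\nu_{k,M}$, and smoothness of its inverse. Lifting plots of $T_kM$ only describes the source $T(T_kM)$. What you need concerns the target. A tuple $(A_1,\dots,A_k)$ in $T^2M$ with common $T(\pi_M)$-image must have representatives $T^2P(c_i)$ over one plot $P$ with common image under $\hat T(\hat\pi_{U_P})$. But (E1), applied to $TM$ via (E5), gives common representatives only for the other projection $\pi_{TM}$.

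The missing ingredient is the flip:
\begin{itemize}
\item Since $\pi_{TM}\circ\tau_M=T(\pi_M)$, the tuple $(\tau_MA_i)_i$ lies in $T_k(TM)$.
\item (E1) for $TM$, together with local factorization of plots of $TM$ through some $T(P)$, gives $\tau_MA_i=T^2P(c_i)$ with common image under $\hat\pi_{\hat TU_P}$.
\item By naturality of $\tau$ and $\tau^2=\id$, $A_i=T^2P(\hat\tau c_i)$. So $(A_i)_i$ is $\nu_{k,M}$ of the corresponding tangent vector to the plot $T(P)\times_P\cdots\times_PT(P)$ of $T_kM$.
\item The same argument run on plots gives smoothness of $\nu_{k,M}^{-1}$.
\end{itemize}
The additivity axioms for $\lambda$ and $\tau$ in Step~2 involve $T(+_M)$ on $T^2M\times_{TM}T^2M$, so this step must come before Step~2.

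Finally, the product claim in Step~1 needs $\mathbb{L}\hat T_k$ and $\mathbb{L}\hat T^2$ to preserve binary products. This follows from finality of $(P,Q)\mapsto P\times Q$ from $\D(M)\times\D(N)$ to $\D(M\times N)$ together with cartesian closedness of $\mathsf{Diff}$. It is true, but it is an argument you must give, not a formal consequence of the Euclidean case.
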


\begin{rem}\label{rem:negatives}
In a tangent category the fibres of $\pi_M$ are a priori only commutative
monoids. Here the scalar $\R$-multiplication supplies negatives: for
$v \in TM$ one has
$v + \kappa_M(-1, v) = 0_M(\pi_M(v))$, as one checks on representatives using
\eqref{eq:TP-linear} below. Hence every $\pi_M \colon TM \to M$, and likewise
every $\pi_{TM} \colon T^2M \to TM$, is a bundle of abelian \emph{groups}, and
differences of morphisms with the same base point are defined. We use this
silently from Section~\ref{sec:connections} on.
\end{rem}

The class of elastic spaces is sufficiently broad for our purposes: it contains
all finite-dimensional manifolds, all convenient vector spaces---in particular
all Banach and Fr\'echet spaces---and the manifolds modelled on them
\cite[Section~2.1]{M}, and it is stable under the mapping-space constructions
of Section~\ref{sec:mapping-space} \cite{B23}. Let us emphasize at once that
this breadth has a price. Smooth dynamical systems on a convenient vector space
need not have unique solutions, so $\mathsf{Elast}$ is strictly larger than the
subcategory $\mathsf{Elast}_{\mathrm{curve}}$ on which the geodesic theory of
Section~\ref{sec:geodesics} takes place; the two should be kept carefully
apart. From now on, unless stated otherwise, every diffeological space is
assumed elastic, and we freely use the structure maps of
Theorem~\ref{thm:tangent-structure-2}.

\begin{rem}\label{rem:cocone-TP}
Since $\mathcal{Y}$ is full and faithful, the left Kan extension along
$\mathcal{Y}$ restricts on representables to the original functor; in
particular $T(\mathcal{Y}U) \cong \mathcal{Y}(\hat{T}U)$ for every
$U \in \mathsf{Euc}$. Under this identification the cocone map of the colimit
$TM = \colim_{P \in \D(M)} \hat{T}U_P$,
\begin{equation*}
  \hat{T}U_P = U_P \times \R^{\dim U_P} \longrightarrow TM,
  \qquad (u,v) \longmapsto [P,u,v],
\end{equation*}
is precisely the tangent map $T(P)$ of the plot $P$: applying the formula of
Remark~\ref{rem:tangent-structure-2} to the identity plot of $U_P$ gives
$T(P)\,[\id_{U_P},u,v] = [P,u,v]$. We henceforth write $T(P)$ for the cocone
maps.
\end{rem}

\begin{lem}\label{lem:TP-linear}
Let $M$ be elastic. Each cocone map $T(P) \colon \hat{T}U_P \to TM$ is a
linear bundle morphism (Definition~\ref{def:differential-bundle-morphism})
over $P$; explicitly, on representatives,
\begin{equation}\label{eq:TP-linear}
  \kappa_M\bigl(r,[P,u,v]\bigr) = [P,u,rv],
  \qquad
  [P,u,v_0] +_M [P,u,v_1] = [P,u,v_0+v_1].
\end{equation}
Moreover the family $\{T(P)\}_{P \in \D(M)}$ is jointly epic, and so is the
family $\{T^{(k)}(P)\}$ of cocone maps of
$\mathbb{L}\hat{T}_kM = \colim_P \hat{T}_kU_P$.
\end{lem}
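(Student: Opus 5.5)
The scalar formula is immediate from the definitions. Since $\kappa = \mathbb{L}\hat\kappa$ acts representative-wise (Remark~\ref{rem:tangent-structure-2}, Definition~\ref{def:nat-trans}), and $\R\times TM \cong \mathbb{L}(\R\times\hat T)M$ by Cartesian closedness, we get $\kappa_M(r,[P,u,v]) = [P,\hat\kappa_{U_P}(r,(u,v))] = [P,u,rv]$.

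For addition we use (E1). By Theorem~\ref{thm:tangent-structure-2}, $+_M = (\mathbb{L}\hat+)_M\circ\theta_{2,M}^{-1}$. The explicit formula for $\theta_2$ gives
\[
\theta_{2,M}[P,u,v_0,v_1] = ([P,u,v_0],[P,u,v_1]).
\]
Hence $\theta_{2,M}^{-1}([P,u,v_0],[P,u,v_1]) = [P,u,v_0,v_1]$ in $\mathbb{L}\hat T_2M$. Applying $\mathbb{L}\hat+$ representative-wise yields $[P,u,v_0+v_1]$. Together the two identities say that $T(P)$ commutes with $\pi$ (covering $P$), with $0$, with $\kappa$ and with $+$, i.e.\ $T(P)\circ\hat\kappa = \kappa_M\circ(\R\times T(P))$ and $T(P)\circ\hat+ = +_M\circ (T(P)\times_P T(P))$. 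One then checks these against the conditions of Definition~\ref{def:differential-bundle-morphism}. If linearity there is phrased via the lift, it should follow from naturality of $\lambda = \theta^2\circ\mathbb{L}\hat\lambda$ together with $\theta^2_M[P,\dots] = T^2P(\dots)$, giving $T^2(P)\circ\hat\lambda = \lambda_M\circ T(P)$.

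Joint epicness comes from the colimit presentation. Every element of $TM = \colim_P \hat TU_P$ has the form $[P,u,v] = T(P)(u,v)$, so the cocone maps are jointly surjective. Better, for any two maps $f,g\colon TM\to X$ with $f\circ T(P) = g\circ T(P)$ for all $P$, the universal property of the colimit gives $f=g$. The same argument applies verbatim to the cocone maps $T^{(k)}(P)\colon \hat T_kU_P\to \mathbb{L}\hat T_kM$, and via $\theta_k$ (an isomorphism by (E1)) these transfer to jointly epic families into $T_kM$.

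The only delicate point is the addition formula. It requires that $\theta_2^{-1}$ sends a pair of classes sharing the same plot and base point to the obvious single representative. This is exactly what the explicit formula for $\theta_2$ plus bijectivity under (E1) provides. For pairs whose two components have different representing plots one could not write this down, but the lemma only concerns common representatives, so this step poses no obstacle.
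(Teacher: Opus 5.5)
Your proof is correct and follows essentially the same route as the paper. You obtain the scalar and additive identities by letting $\kappa=\mathbb{L}\hat\kappa$ and $+_M=\mathbb{L}\hat{+}\circ\theta_{2,M}^{-1}$ act on representatives, where the paper phrases these as naturality of $\kappa$ and $+$ along $P$ using (E1), and joint epimorphy comes from the colimit presentation in both. Your check of the lift condition $\lambda_M\circ T(P)=T^2(P)\circ\hat\lambda_{U_P}$ via $\lambda=\theta^2\circ\mathbb{L}\hat\lambda$ is exactly what Definition~\ref{def:differential-bundle-morphism} requires, and the paper's proof leaves it implicit, so you need not hedge it.
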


\begin{proof}
Compatibility with $\pi$ and $0$ is immediate from
Definition~\ref{def:nat-trans}. The first identity in \eqref{eq:TP-linear} is
the naturality of $\kappa$ applied to $P \colon U_P \to M$, and the second is
the naturality of $+$, which is available because $M$ is elastic and hence
$\theta_{2,M}$ is invertible by (E1). Joint epimorphy holds because in
$\mathsf{Diff}$ a colimit is a quotient of a coproduct, so the cocone maps are
jointly surjective and the colimit diffeology is the pushforward of the
diffeologies of the $\hat{T}U_P$.
\end{proof}

\subsection{Vector Fields, One-Forms and the Lie Bracket}

This subsection follows the general framework developed in
\cite[Sections~2.4, 2.6]{AB} and \cite[Section~A.1]{M}, where the statements
are established for an arbitrary Cartesian tangent category with scalar
$\R$-multiplication and therefore apply verbatim to $\mathsf{Elast}$ by
Theorem~\ref{thm:tangent-structure-2}. In contrast with
Section~\ref{sec:tangent-structure}.1, the results below that are specific to
the diffeological setting are proved in full, since they are used repeatedly in
the sequel.

\begin{defn}[Vector field]\label{def:vector-field}
Let $M \in \mathsf{Elast}$.
A \textbf{vector field} $V \in \mathfrak{X}(M)$ on $M$ is a smooth section
$V \colon M \to T(M)$, that is, a smooth map satisfying $\pi_M \circ V = \id_M$.
\end{defn}

\begin{defn}\label{def:vf-module}
For $V, W \in \mathfrak{X}(M)$ and $f \in C^\infty(M,\R)$ we set
\begin{equation*}
  V + W \coloneqq {+}_M \circ \langle V, W\rangle,
  \qquad
  fV \coloneqq \kappa_M \circ \langle f, V \rangle ,
\end{equation*}
where $\langle -,- \rangle$ denotes the induced morphism into the fibre
product, respectively into $\R \times TM$. Both are again smooth sections,
since $+_M$ and $\kappa_M$ are morphisms of $\mathsf{Elast}$ over $M$.
\end{defn}

\begin{lem}\label{lem:vf-module}
With the operations of Definition~\ref{def:vf-module}, $\mathfrak{X}(M)$ is a
module over the commutative $\R$-algebra $C^\infty(M,\R)$; in particular it is
an $\R$-module, the scalar $r \in \R$ acting through the constant function $r$.
\end{lem}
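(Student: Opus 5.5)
The plan is to reduce every module axiom to a pointwise statement in the fibres $T_xM = \pi_M^{-1}(x)$, and then prove those fibrewise identities on representatives using \eqref{eq:TP-linear}. A smooth section is determined by its values, so two sections $V,W$ agree iff $V(x)=W(x)$ for all $x\in M$. By Definition~\ref{def:vf-module}, $(V+W)(x) = V(x)+_M W(x)$ and $(fV)(x) = \kappa_M(f(x),V(x))$. It therefore suffices to show that each fibre $T_xM$, with $+_M$ and $\kappa_M$, is a real vector space. The zero of $\mathfrak{X}(M)$ is the zero section $0_M$, which is smooth and satisfies $\pi_M\circ 0_M=\id_M$. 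The negative of $V$ is $(-1)V$, which is smooth by Definition~\ref{def:vf-module}; Remark~\ref{rem:negatives} shows it is an additive inverse.

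For the fibrewise vector-space axioms, I would use that a tangent category makes $\pi_M$ an additive bundle, so associativity, commutativity and the unit law for $+_M$ with unit $0_M$ hold by Theorem~\ref{thm:tangent-structure-2}. The remaining axioms are $(rs)v = r(sv)$, $1v=v$, $r(v+w)=rv+rw$ and $(r+s)v = rv+sv$. Associativity and unit for scalars follow directly from the formula $\kappa_M(t,[P,u,v])=[P,u,tv]$ of Definition~\ref{def:nat-trans}. The two distributive laws are the only place where some care is needed. To check them, write $v,w\in T_xM$ with a \emph{common} representing plot. By (E1), $\theta_{2,M}$ is a bijection, so the pair $(v,w)\in T_2M$ equals $\theta_{2,M}[P,u,v_0,v_1]=([P,u,v_0],[P,u,v_1])$ for a single plot $P$ with $P(u)=x$. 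Now \eqref{eq:TP-linear} gives
\[
r(v+w) = [P,u,r(v_0+v_1)] = [P,u,rv_0]+[P,u,rv_1] = rv+rw,
\]
and $(r+s)v=[P,u,(r+s)v_0] = rv+sv$ in the same way. Joint surjectivity of the cocone maps in Lemma~\ref{lem:TP-linear} guarantees that such representatives always exist.

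Finally, $C^\infty(M,\R)$ is a commutative $\R$-algebra under pointwise operations. The pointwise identities above then give $(fg)V=f(gV)$, $(f+g)V=fV+gV$, $f(V+W)=fV+fW$ and $1V=V$. The $\R$-module structure is the restriction along the constant functions $\R\to C^\infty(M,\R)$. The only real obstacle is the step where two vectors need a common representative, and (E1) is exactly what provides it.
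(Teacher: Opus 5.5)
Your proposal is correct and follows essentially the paper's route: the paper also checks every module axiom on representatives $[P,u,v]$ via Lemma~\ref{lem:TP-linear}, and takes negatives from Remark~\ref{rem:negatives}. Your version additionally makes explicit a step the paper leaves implicit: two vectors in the same fibre have a common representing plot, which follows from (E1) together with the joint surjectivity of the cocone maps of $\mathbb{L}\hat{T}_2M$.
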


\begin{proof}
All axioms are equalities of morphisms $M \to TM$, and by
Lemma~\ref{lem:TP-linear} they may be verified on representatives $[P,u,v]$,
where they reduce to the corresponding identities in the fibres
$\R^{\dim U_P}$ of $\hat{T}U_P$. Negatives are supplied by
Remark~\ref{rem:negatives}.
\end{proof}

We shall use the $C^\infty(M,\R)$-module structure of
Lemma~\ref{lem:vf-module} throughout Sections~\ref{sec:connections}
and~\ref{sec:riemannian-metric}, where the tensoriality of connections and
metrics is expressed in terms of it.

Before defining the action of vector fields on functions we identify the
fibrewise-linear functions on $TM$ with the $1$-forms of Iglesias-Zemmour.

\begin{lem}\label{lem:cotangent-omega}
Let $M$ be an elastic diffeological space, and write $\Clin(-,\R)$ for the set
of fibrewise-linear smooth maps. Then the assignment
$\phi\mapsto(\phi\circ T(P))_P$ is a bijection
\[
  \Clin(TM,\R)
  \;\xrightarrow{\ \sim\ }\;
  \lim_{P\in\D(M)}\Clin(\hat{T}U_P,\R)
  =\lim_{P\in\D(M)}\Omega^{1}(U_P)
  =\Omega^{1}(M),
\]
the limits being taken in the category of sets and the last equality being the
definition of a differential $1$-form on a diffeological space \cite[Section~1.1]{B23}.
\end{lem}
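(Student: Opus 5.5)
The plan is to use the universal property of the colimit $TM=\colim_{P\in\D(M)}\hat{T}U_P$ (Remark~\ref{rem:cocone-TP}) and then check that linearity passes in both directions. Since $TM$ is a colimit in $\mathsf{Diff}$, smooth maps $TM\to\R$ correspond bijectively to compatible families of smooth maps $\hat{T}U_P\to\R$, via $\phi\mapsto(\phi\circ T(P))_P$. Compatibility means $\phi_P\circ\hat{T}h=\phi_Q$ for every morphism of plots $h\colon U_Q\to U_P$ with $P\circ h=Q$. A compatible family of this kind is exactly an element of $\lim_P C^\infty(\hat{T}U_P,\R)$ in $\mathsf{Set}$. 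So the assignment in the statement is a bijection $C^\infty(TM,\R)\cong\lim_P C^\infty(\hat{T}U_P,\R)$, and what remains is to show that it restricts to a bijection between the fibrewise-linear subsets.

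\emph{Linear implies linear.} Suppose $\phi$ is fibrewise linear. Lemma~\ref{lem:TP-linear} says each $T(P)$ is a linear bundle morphism, so $\phi\circ T(P)(u,rv_0+sv_1)=\phi(r[P,u,v_0]+s[P,u,v_1])=r\,\phi[P,u,v_0]+s\,\phi[P,u,v_1]$. Hence $\phi\circ T(P)$ is fibrewise linear on $U_P\times\R^{\dim U_P}$. A smooth function on $\hat{T}U_P$ that is linear in the fibre is exactly a $1$-form $\sum_i a_i(u)\,dv^i$, so $\Clin(\hat{T}U_P,\R)=\Omega^1(U_P)$. The family is still compatible, since the compatibility maps $\hat{T}h$ are the pullbacks of forms. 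This identifies the limit with $\Omega^1(M)$ by definition.

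\emph{Linear family implies linear $\phi$.} This is the step I expect to be the main obstacle. Take a compatible family of linear maps and let $\phi$ be the induced smooth map. We must show $\phi(\xi_0+\xi_1)=\phi(\xi_0)+\phi(\xi_1)$ for arbitrary $\xi_0,\xi_1$ in the same fibre. The difficulty is that $\xi_0$ and $\xi_1$ may come from different plots. Here we use (E1): $\theta_{2,M}\colon\mathbb{L}\hat{T}_2M\to T_2M$ is an isomorphism, so the pair $(\xi_0,\xi_1)\in T_2M$ has a representative $[P,u,v_0,v_1]$ on a single plot. In other words, $\xi_i=[P,u,v_i]$ for one common $P$. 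Then, by \eqref{eq:TP-linear}, $\xi_0+\xi_1=[P,u,v_0+v_1]$. Applying $\phi$ gives $\phi_P(u,v_0+v_1)=\phi_P(u,v_0)+\phi_P(u,v_1)$, using the linearity of $\phi_P$. Scalar multiplication is easier: $\kappa_M(r,[P,u,v])=[P,u,rv]$ holds on any single representative, so homogeneity follows directly.

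Injectivity of the restricted map is inherited from the bijection on all smooth maps, which rests on joint epimorphy in Lemma~\ref{lem:TP-linear}. This completes the bijection.
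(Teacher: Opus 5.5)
Your proposal is correct and follows the paper's proof essentially step for step. The shared ingredients are:
\begin{enumerate}
  \item the colimit bijection $C^\infty(TM,\R)\cong\lim_P C^\infty(\hat{T}U_P,\R)$;
  \item linearity of the cocone maps $T(P)$ (Lemma~\ref{lem:TP-linear}) for the forward direction;
  \item for the converse, (E1) to represent a pair of vectors in a common fibre on a single plot, followed by \eqref{eq:TP-linear}, with homogeneity checked directly on representatives.
\end{enumerate}
The paper phrases step 3 as precomposing with $\theta_{2,M}$ and using joint epimorphy of the cocone maps of $\mathbb{L}\hat{T}_2M$, which amounts to the same thing. The only blemish is notational: the fibrewise-linear function on $\hat{T}U_P$ corresponds to the $1$-form $\sum_i a_i(u)\,du^i$ (evaluated as $\sum_i a_i(u)v^i$), not $\sum_i a_i(u)\,dv^i$.
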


\begin{proof}
Since $C^{\infty}(-,\R)$ is contravariant and carries the colimit
$TM=\colim_P\hat{T}U_P$ to a limit,
\begin{equation}\label{eq:nonlinear-iso}
  C^{\infty}(TM,\R)\;\xrightarrow{\ \sim\ }\;
  \lim_{P}C^{\infty}(\hat{T}U_P,\R),
  \qquad
  \phi\longmapsto(\phi\circ T(P))_{P},
\end{equation}
is a bijection, so it suffices to show that $\phi$ is fibrewise linear if and
only if $\phi_P:=\phi\circ T(P)$ is fibrewise linear for every $P$.

If $\phi$ is fibrewise linear, then so is each $\phi_P$, since $T(P)$ is a
linear bundle morphism (Lemma~\ref{lem:TP-linear}).

Conversely, assume every $\phi_P$ is fibrewise linear. For additivity we must
show $\phi\circ+_M=+_{\R}\circ(\phi\times_M\phi)$ as maps $T_2M\to\R$. Since
$+_M=\mathbb{L}\hat{+}\circ\theta_{2,M}^{-1}$ with
$\theta_{2,M}\colon\mathbb{L}\hat{T}_2M\xrightarrow{\sim}T_2M$ the elastic
isomorphism~(E1), this is equivalent to
$\phi\circ\mathbb{L}\hat{+}=+_{\R}\circ(\phi\times_M\phi)\circ\theta_{2,M}$
on $\mathbb{L}\hat{T}_2M=\colim_P\hat{T}_2U_P$. The cocone maps of this colimit
are jointly epic (Lemma~\ref{lem:TP-linear}), so it suffices to check the
equality on representatives:
\[
  \phi\circ\mathbb{L}\hat{+}\,[P,u,v_0,v_1]
  =\phi_P(u,v_0+v_1)
  =\phi_P(u,v_0)+\phi_P(u,v_1)
  =+_{\R}\circ(\phi\times_M\phi)\,\theta_{2,M}[P,u,v_0,v_1].
\]
Homogeneity is verified in the same manner, on representatives of $TM$
directly: by \eqref{eq:TP-linear} and homogeneity of $\phi_P$,
\[
  \phi\bigl(\kappa_M(r,[P,u,v])\bigr)
  =\phi_P(u,rv)
  =r\,\phi_P(u,v)
  =r\,\phi[P,u,v].
\]
Thus \eqref{eq:nonlinear-iso} restricts to the asserted bijection.
\end{proof}

The categorical definition of the action of a vector field on a function uses
only the tangent structure; Proposition~\ref{prop:pr2-Tf} below identifies it
with the classical formula.

\begin{defn}\label{def:vf-action}
For a function $f \colon M \to \R$ and a vector field $V \in \mathfrak{X}(M)$,
the \textbf{action} of $V$ on $f$ is the function $V \cdot f \colon M \to \R$
defined by
\begin{align*}
    V \cdot f := \pr_2 \circ T(f) \circ V ,
\end{align*}
where $\pr_2 \colon T\R = \R \times \R \to \R$ is the second projection.
\end{defn}

\begin{prop}\label{prop:pr2-Tf}
  Let $d \colon \Omega^n(M) \to \Omega^{n+1}(M)$ denote the exterior derivative.
  For any smooth function $f \colon M \to \R$ one has
  $\pr_2 \circ T(f) = d(f)$
  under the identification of Lemma~\ref{lem:cotangent-omega}; consequently
  $V \cdot f = d(f) \circ V$ for every $V \in \mathfrak{X}(M)$.
\end{prop}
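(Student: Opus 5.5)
The plan is to reduce everything to plots via the bijection of Lemma~\ref{lem:cotangent-omega}. First I check that $\pr_2 \circ T(f) \colon TM \to \R$ is a fibrewise-linear smooth map, so that it lies in $\Clin(TM,\R)$ and corresponds to some $1$-form. Smoothness is clear, since it is a composite of smooth maps. Fibrewise linearity follows because $T(f)$ is a linear bundle morphism $TM \to T\R$: it commutes with $+$ and $\kappa$ by naturality of these transformations, which are part of the tangent structure of Theorem~\ref{thm:tangent-structure-2}. In addition, $\pr_2 \colon T\R = \R\times\R \to \R$ is linear on fibres of $T\R$. Alternatively, linearity can be read off from the plot-wise computation in the next step, using the converse direction in the proof of Lemma~\ref{lem:cotangent-omega}.

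The second step identifies the family $(\pr_2\circ T(f)\circ T(P))_P$. By functoriality, $T(f)\circ T(P) = T(f\circ P)$, and by Remark~\ref{rem:cocone-TP} this is the classical tangent map of the smooth function $f\circ P\colon U_P\to\R$ under $T(\mathcal{Y}U)\cong \mathcal{Y}(\hat T U)$ (and $T\R\cong \hat T\R$). Concretely,
\[
  \pr_2\circ T(f)\,[P,u,v] = \pr_2[\,f\circ P,u,v\,] = D_u(f\circ P)(v),
\]
using $\hat T(f\circ P)(u,v) = (f(P(u)), D_u(f\circ P)(v))$. This is precisely $d(f\circ P)=P^*(df)$ evaluated at $(u,v)$, that is, the plot-wise value of the diffeological exterior derivative $df$, which by definition is the family $(d(f\circ P))_P$. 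Hence the two elements of $\lim_P \Omega^1(U_P)$ coincide, and since the map of Lemma~\ref{lem:cotangent-omega} is a bijection, $\pr_2\circ T(f)$ corresponds to $d(f)$. The consequence $V\cdot f = d(f)\circ V$ is then immediate from Definition~\ref{def:vf-action}, once $d(f)$ is read as the fibrewise-linear map $TM\to\R$.

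The main obstacle I expect is bookkeeping rather than substance. One point is making precise that $T(\R)$ is identified with $\R\times\R$ so that $\pr_2$ makes sense, which uses $T(\mathcal{Y}\R)\cong\mathcal{Y}(\hat T\R)$ from Remark~\ref{rem:cocone-TP}. The other is confirming that the class $[f\circ P,u,v]\in T\R$ corresponds under this identification to $\hat T(f\circ P)(u,v)$. For this I would apply the relation $[Q\circ h,w]=[Q,\hat T h(w)]$ with $Q=\id_\R$ and $h=f\circ P$, which gives $[f\circ P,u,v]=[\id_\R, f(P(u)), D_u(f\circ P)v]$ directly.
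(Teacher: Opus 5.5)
Your proposal is correct and follows the paper's proof: both compute plotwise $\pr_2\circ T(f)\circ T(P)(u,v)=\pr_2\circ\hat{T}(f\circ P)(u,v)=d(f\circ P)_u(v)$ and conclude from the injectivity of the bijection of Lemma~\ref{lem:cotangent-omega}. Your explicit check that $\pr_2\circ T(f)$ is fibrewise linear, and hence lies in $\Clin(TM,\R)$, spells out a step the paper takes for granted.
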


\begin{proof}
  By definition, $d(f) \in \Omega^1(M) = \lim_P \Omega^1(U_P)$ is the family
  $(d(f \circ P))_{P \in \D(M)}$, and for a function $g$ on a domain one has
  $d(g) = \pr_2 \circ \hat{T}g$. Applying
  Remark~\ref{rem:tangent-structure-2} and $T\R = \hat{T}\R$,
  \begin{equation*}
    \bigl(\pr_2 \circ T(f)\bigr)\circ T(P)\,(u,v)
      = \pr_2\bigl[f \circ P, u, v\bigr]
      = \pr_2 \circ \hat{T}(f \circ P)(u,v)
      = d(f \circ P)_u(v)
  \end{equation*}
  for every plot $P$. Thus the image of $\pr_2 \circ T(f) \in \Clin(TM,\R)$
  under the bijection of Lemma~\ref{lem:cotangent-omega} is the family
  $(d(f\circ P))_P = d(f)$, and injectivity of that bijection gives
  $\pr_2 \circ T(f) = d(f)$.
\end{proof}

The action of vector fields on functions has the following properties.

\begin{prop}[{\cite[Proposition~2.25]{AB}}]\label{prop:vf-action-properties}
  For any $V, W \in \mathfrak{X}(M)$ and $f, g \in C^\infty(M, \R)$,
  the following hold:
  \begin{enumerate}
    \item $V \cdot (f + g) = V \cdot f + V \cdot g$;
    \item $(V + W) \cdot f = V \cdot f + W \cdot f$;
    \item $V \cdot (fg) = (V \cdot f)g + f(V \cdot g)$.
  \end{enumerate}
\end{prop}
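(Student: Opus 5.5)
The plan is to reduce all three identities to the formula $V \cdot f = d(f)\circ V$ of Proposition~\ref{prop:pr2-Tf}, and then to pointwise identities for $1$-forms on $M$. With this formula each claim becomes an equality of functions $M \to \R$, which may be checked at each point $x \in M$. By Lemma~\ref{lem:TP-linear} the cocone maps $T(P)$ are jointly surjective, so for fixed $x$ we may choose a plot $P$, a point $u$ and a vector $v$ with $V(x) = [P,u,v]$. Then, by Lemma~\ref{lem:cotangent-omega} and the proof of Proposition~\ref{prop:pr2-Tf},
\begin{equation*}
  (V\cdot f)(x) = d(f)[P,u,v] = d(f\circ P)_u(v).
\end{equation*}

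For (1), I will use that the exterior derivative on $\Omega^0(M)$ is additive: $d(f+g) = (d(f\circ P) + d(g\circ P))_P$ plotwise. Evaluating at $[P,u,v]$ then gives the claim, since the sum of $1$-forms corresponds under Lemma~\ref{lem:cotangent-omega} to the pointwise sum of fibrewise-linear functions on $TM$.

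For (3), the same plotwise argument applies with the Leibniz rule on the domain $U_P$:
\begin{equation*}
  d((fg)\circ P)_u(v) = d(f\circ P)_u(v)\,g(P(u)) + f(P(u))\,d(g\circ P)_u(v).
\end{equation*}
Because $P(u) = \pi_M(V(x)) = x$, the right-hand side equals $(V\cdot f)(x)\,g(x) + f(x)\,(V\cdot g)(x)$.

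For (2), the two vectors $V(x)$ and $W(x)$ are a priori represented by \emph{different} plots, and this is the main obstacle. I will handle it with the elastic isomorphism (E1). The pair $(V(x),W(x)) \in T_2M$ equals $\theta_{2,M}[Q,u,v_0,v_1]$ for a single plot $Q$, so $V(x) = [Q,u,v_0]$ and $W(x) = [Q,u,v_1]$. By \eqref{eq:TP-linear}, $(V+W)(x) = [Q,u,v_0+v_1]$. Additivity then follows from linearity of $d(f\circ Q)_u$, or directly from the fibrewise linearity of $d(f) \in \Clin(TM,\R)$ established in Lemma~\ref{lem:cotangent-omega}, which already encodes this common-plot argument. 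In the write-up I will simply cite that linearity: $d(f)\circ(V+W) = d(f)\circ +_M\circ\langle V,W\rangle = d(f)\circ V + d(f)\circ W$.
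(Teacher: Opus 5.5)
Your proof is correct, but it takes a different route from the paper. The paper gives no argument of its own. It cites [AB, Proposition~2.25], where these identities are established in an arbitrary Cartesian tangent category with scalar $R$-multiplication, and transfers them to $\mathsf{Elast}$ via Theorem~\ref{thm:tangent-structure-2}. That proof works only with naturality of the structure maps, tangent-stability of the ring object and the scalar-multiplication axioms, and never chooses representatives.

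You instead use the diffeological colimit presentation. Via Proposition~\ref{prop:pr2-Tf} you evaluate at a representative $[P,u,v]$ of $V(x)$, so that (1) and (3) reduce to the sum and product rules for $d$ on $U_P$, using $P(u)=\pi_M(V(x))=x$. For (2) you use (E1) and \eqref{eq:TP-linear} to put $V(x)$ and $W(x)$ on a common plot.

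Your route is concrete and self-contained within the paper, and it shows exactly where elasticity matters. Your arguments for (1) and (3) use only the colimit description of $TM$ and $\pi_M\circ V=\id_M$, so they hold on any diffeological space. Only (2) needs $+_M$, and hence (E1). The cited route buys generality, in line with the paper's tangent-categorical viewpoint.

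Finally, the common-plot step in (2) can be bypassed. Naturality of $+$ gives $T(f)\circ +_M = +_{\R}\circ (T(f)\times_f T(f))$, and $\pr_2$ is additive on the fibres of $T\R=\R\times\R$. This is precisely the fibrewise linearity of $d(f)$ that you end up citing in your write-up.
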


We now turn to the Lie bracket, for which we first record the standard
extension of the vertical lift.

\begin{defn}\label{def:vertical-lift-extension}
Using the addition of the tangent structure as a bundle of abelian groups
(Remark~\ref{rem:negatives}), the \textbf{extended vertical lift} is the
morphism $\lambda_2 \colon T_2 \to T^2$ defined by
\begin{align*}
    \lambda_2 := \tau \circ {+}_T \circ (T0 \times_0 \lambda),
\end{align*}
where $T0 \times_0 \lambda$ denotes the map induced on the fibre product
$T_2 = T\times_\pi T$ by $T0$ on the first factor and $\lambda$ on the second;
component-wise,
$\lambda_{2,M} = \tau_M \circ {+}_{TM} \circ (T0_M \times_{0_M} \lambda_M)$, so
that informally $\lambda_{2,M}(v,w) = \tau_M(T0_M(v)) +_{TM} \lambda_M(w)$.
\end{defn}

\begin{prop}[{\cite[Section~3.4]{CC14}}]\label{prop:lambda2-iso}
For every $M \in \mathsf{Elast}$ the morphism $\lambda_{2,M}$ is an
isomorphism from $T_2(M) = T(M) \times_M T(M)$ onto the vertical tangent bundle
$\Ker(T\pi_M) \subseteq T^2M$.
\end{prop}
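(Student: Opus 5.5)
Since $\mathsf{Elast}$ is a tangent category by Theorem~\ref{thm:tangent-structure-2}, the plan is to derive Proposition~\ref{prop:lambda2-iso} from the axioms of a tangent category and not from representatives. The essential input is the \emph{universality of the vertical lift}: the square exhibiting $\lambda$ as an equalizer/pullback, namely that
\[
  T_2M \xrightarrow{\ \langle \pi\circ\pr_1,\ \ldots\rangle\ } \cdots
\]
or, in its usual form, that $\mu \coloneqq T(+)\circ\langle \lambda\circ\pr_1,\ 0_T\circ\pr_2\rangle\colon T_2M\to T^2M$ together with $T\pi_M$ is a pullback of $0_M\colon M\to TM$ along $T\pi_M$. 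In \cite{CC14} this axiom is part of the definition of a tangent category. In Blohmann's setting it is supplied by the elastic axioms, specifically (E3), (E4) and (E5), as part of Theorem~\ref{thm:tangent-structure-2}. Granting this, $\Ker(T\pi_M)$, the pullback of $T\pi_M$ along $0_M$, is identified with $T_2M$ via $\mu$. The remaining task is to show that $\lambda_2$ differs from $\mu$ by an automorphism of $T_2M$ compatible with these structures.

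The steps are as follows.

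\emph{Step 1.} Check that $\lambda_{2,M}$ actually lands in $\Ker(T\pi_M)$. The flip satisfies $T\pi\circ\tau=\pi_T$, and both summands are additive. So
\[
T\pi(\tau T0(v)+\lambda(w)) = \pi_T T0(v) + \pi_T\lambda(w),
\]
where the sum is taken in $T_M$ over $0_M$ and uses $T\pi\circ\tau=\pi_T$ together with $\tau\lambda=\lambda$. Since $\pi_T T0(v)=0(\pi v)$ and $\pi_T\lambda(w)=0(\pi w)$, the result lies in $0_M(M)$.

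\emph{Step 2.} Identify $\tau\circ +_T\circ(T0\times\lambda)$ with $T(+)\circ\tau_2$-type expressions, using the additivity of $\tau$ as a bundle morphism $(TT,T\pi)\to(TT,\pi_T)$ and the identities $\tau\circ T0=0_T$ and $\tau\circ\lambda=\lambda$. This should yield $\lambda_2=\mu\circ\sigma$, where $\sigma$ is the swap of the two factors of $T_2M$. The symmetry of the fibre product makes $\sigma$ an isomorphism.

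\emph{Step 3.} Conclude: $\lambda_2$ is a composite of the isomorphism $\sigma$ and the universal isomorphism $\mu$ onto the kernel, hence it is itself an isomorphism onto $\Ker(T\pi_M)$.

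The main obstacle is Step 2, the interchange of the two additions $+_T$ and $T(+)$ on $T^2M$ through $\tau$. To handle it I would first verify the identity on Euclidean representatives, where it is the trivial computation $(u,v,0,w)$. I would then transport it using the joint epimorphy of the cocone maps $\theta^2\circ\mathbb{L}\hat{T}_2$ (Lemma~\ref{lem:TP-linear}) together with (E1), (E2) and (E5). These guarantee that the relevant fibre products are colimits of their Euclidean counterparts, so that equalities of morphisms can be checked plotwise.
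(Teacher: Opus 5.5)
Your argument is correct, but it is a detour compared with the paper. The paper's proof is a one-liner. In the formulation of tangent categories adopted in Appendix~\ref{sec:tangent-categories} (last bullet of Definition~\ref{def:tangent-structure-axioms}), universality of the vertical lift is stated \emph{directly} for $\lambda_2=\tau\circ{+}_T\circ(T0\times_0\lambda)$: the square with $T\pi$, $0$ and $\pi\circ\pr_1$ is a pointwise pullback. Combined with Theorem~\ref{thm:tangent-structure-2}, the proposition is literally that axiom.

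You instead start from the Cockett--Cruttwell form with $\mu=T(+)\circ\langle\lambda\circ\pr_1,0_{TM}\circ\pr_2\rangle$ and transfer it along $\lambda_{2,M}=\mu\circ\sigma$. That identity is correct; in Euclidean coordinates $\lambda_2(v,w)=(u,v,0,w)=\mu(w,v)$. The derivation you sketch in Step~2 already proves it in any tangent category. Since $\tau_M$ is an involutive additive bundle morphism between $(T^2M,\pi_{TM},+_{TM})$ and $(T^2M,T\pi_M,T(+_M))$, it sends the $+_{TM}$-sum $T0_M(v)+_{TM}\lambda_M(w)$ to the $T(+_M)$-sum of $\tau_MT0_M(v)=0_{TM}(v)$ and $\tau_M\lambda_M(w)=\lambda_M(w)$. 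Commutativity of $T(+_M)$ then turns this into $\mu(w,v)$.

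So what you call the main obstacle needs no representatives at all. The plotwise check would also work, using naturality of $\lambda_2,\mu,\sigma$ and joint epimorphy of the cocone maps into $T_2M$, for which (E1) alone suffices. Step~1 is redundant once Steps~2--3 are in place.

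What your route buys is independence of convention. It shows the two standard formulations of universality are interchangeable, so the proposition holds whichever one Theorem~\ref{thm:tangent-structure-2} is read as providing.

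There is one slip in Step~1. $\tau_MT0_M(v)$ and $\lambda_M(w)$ lie in different $\pi_{TM}$-fibres but in the same $T\pi_M$-fibre, so their sum is a $T(+_M)$-sum. $T\pi_M$ of such a sum is the common base point $0_M(\pi_M v)$, not a sum $\pi_{TM}T0_M(v)+\pi_{TM}\lambda_M(w)$. Your formula gives the right answer only because both terms are zero.
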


\begin{proof}
This is the universality of the vertical lift, one of the axioms of a tangent
category (Definition~\ref{def:tangent-structure-axioms} in
Appendix~\ref{sec:tangent-categories}), combined with
Theorem~\ref{thm:tangent-structure-2}.
\end{proof}

As in the case of manifolds, one can now define a Lie bracket on vector fields
\cite[Section~3.4]{CC14}.
Let $V, W$ be vector fields on $M$.
By naturality of $\pi_{TM}$,
\begin{equation*}
  \pi_{TM} \circ T(W) \circ V = W \circ \pi_M \circ V = W \circ \id_M = W.
\end{equation*}
Moreover, using the tangent category axiom $\pi_{TM} \circ \tau_M = T(\pi_M)$,
\begin{equation*}
  \pi_{TM} \circ \tau_M \circ T(V) \circ W
    = T(\pi_M) \circ T(V) \circ W
    = T(\id_M) \circ W
    = \id_{TM} \circ W
    = W.
\end{equation*}
Since both expressions have the same base point, their difference is defined in
the bundle of abelian groups $\pi_{TM}\colon T^2M \to TM$
(Remark~\ref{rem:negatives}). This yields the map
$\ell(V,W) \colon M \to T^2(M)$:
\begin{equation*}
  \ell(V,W) = T(W) \circ V - \tau_M \circ T(V) \circ W .
\end{equation*}
Applying the chain rule once more, one finds
\begin{equation*}
  T(\pi_M) \circ \ell(V,W) = V - V = 0_M,
\end{equation*}
so that $\ell(V,W)$ takes values in $\Ker(T\pi_M)$, the vertical tangent
bundle. By Proposition~\ref{prop:lambda2-iso} this kernel is exactly the image
of $\lambda_{2,M}$. Since $\pi_{TM} \circ \ell(V,W) = W$ by the computations
above, the morphism $\ell(V,W)$ factors uniquely through $\lambda_{2,M}$ with
first component $W$; its second component is the vector field we now name.

\begin{defn}\label{def:lie-bracket}
The \textbf{Lie bracket} of vector fields $V, W \in \mathfrak{X}(M)$
is defined as the unique vector field $[V,W] \in \mathfrak{X}(M)$ satisfying
\begin{equation*}
  \ell(V,W) = \lambda_{2,M} \circ \langle W, [V,W] \rangle,
\end{equation*}
where $\langle W, [V,W] \rangle \colon M \to T(M) \times_M T(M)$
is the induced morphism into the fibre product.
\end{defn}

Rosick\'{y} \cite{R} announced that the bracket defined above satisfies
the Jacobi identity; Cockett and Cruttwell \cite{CC15} provided a complete
proof.

\begin{prop}[{\cite[Theorem~4.2]{CC15}}]\label{thm:jacobi}
In any tangent category $\mathcal{C}$, the bracket of
Definition~\ref{def:lie-bracket} is $\R$-bilinear and antisymmetric, and it
satisfies the Jacobi identity for every object $M$.
\end{prop}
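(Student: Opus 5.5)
The plan is to argue entirely inside an abstract tangent category $\mathcal{C}$ with negatives, so that the difference defining $\ell(V,W)$ makes sense, and with scalar $\R$-multiplication wherever $\R$-bilinearity is claimed. I will use only the axioms of Definition~\ref{def:tangent-structure-axioms}: naturality of $\pi,0,+,\lambda,\tau$; $\tau^2=\id$; the compatibilities $\tau\circ\lambda=\lambda$, $\pi_T\circ\tau=T\pi$ and $T\lambda\circ\tau = \tau_T\circ T\tau\circ\lambda_T$; the Yang--Baxter identity $\tau_T\circ T\tau\circ\tau_T=T\tau\circ\tau_T\circ T\tau$; and universality of the lift (Proposition~\ref{prop:lambda2-iso}). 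The first step is to introduce the \emph{vertical descent} $p\colon \Ker(T\pi)\to T$, namely the second component of $\lambda_2^{-1}$. Then $[V,W]=p\circ\ell(V,W)$. I would record two facts about $p$: it is additive for both additions on $T^2$ restricted to the vertical bundle, and it is natural, i.e.\ $T$ applied to verticals commutes with $p$ up to the appropriate flip.

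\textbf{Bilinearity.} Additivity in $W$ follows because $T(W_1+W_2)\circ V = T(W_1)\circ V +_T T(W_2)\circ V$, using naturality of $+$ and functoriality. Likewise $\tau\circ T(V)\circ(W_1+W_2)$ splits by linearity of $\tau\circ TV$ in the $\pi_T$-fibres. Then $p$ is additive, so the bracket is additive in $W$. Additivity in $V$ is symmetric, with the roles of the two additions on $T^2$ interchanged by $\tau$. Scalar multiplication by constants is handled in exactly the same way, with $\kappa$ in place of $+$.

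\textbf{Antisymmetry.} I would show $\ell(W,V) = -\,\tau\circ\ell(V,W)$ up to the identification of the two bundle structures on the vertical part. Applying $\tau$ to $\ell(V,W)$ and using $\tau^2=\id$ interchanges the two terms, up to sign. Because $\tau\circ\lambda=\lambda$, the map $\tau$ acts on verticals compatibly with $\lambda_2$ and swaps nothing in the second component. Hence $p(\tau\,\xi)=p(\xi)$, and $[W,V]=-[V,W]$ follows.

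\textbf{Jacobi identity.} This is the main obstacle, and here I would follow \cite{CC15}. Write $[U,[V,W]] = p\circ\bigl(T(p\circ\ell(V,W))\circ U - \tau\circ T(U)\circ p\circ\ell(V,W)\bigr)$. Using naturality of $p$, I would lift everything to a single expression in $T^3M$, built from $T^2W\circ TV\circ U$ and its images under the permutations generated by $\tau_T$ and $T\tau$. Each of the three cyclic terms then becomes an alternating sum of six such terms indexed by $S_3$. The braid (Yang--Baxter) relation guarantees that the six permutation maps are well defined, and therefore that the eighteen terms of the cyclic sum cancel in pairs. Finally, applying a doubly iterated descent $p\circ Tp$ (or $p\circ p_T$) and checking that it lands in zero gives the Jacobi identity. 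The delicate points are twofold. First, one must verify that every intermediate expression is vertical, so that $p$ applies. Second, one must keep track of which of the several additive structures on $T^3M$ each difference uses, and the interchange law between them is where the coherence $T\lambda\circ\tau=\tau_T\circ T\tau\circ\lambda_T$ enters.
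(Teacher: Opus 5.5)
Deferring the Jacobi identity to \cite{CC15} is exactly what the paper does; it gives no argument beyond that citation. The antisymmetry step that you argue yourself, however, fails as written.

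The flip does not preserve $\Ker(T\pi_M)$: it exchanges $\Ker(T\pi_M)$ and $\Ker(\pi_{TM})$. Since $\pi_{TM}\circ\ell(V,W)=W$, the element $\tau_M\circ\ell(V,W)$ has $\pi_{TM}$-projection zero and $T(\pi_M)$-projection $W$. So it lies outside the domain of $p$, and ``$p(\tau\xi)=p(\xi)$'' has no meaning. Likewise $\tau_M\circ\lambda_{2,M}\neq\lambda_{2,M}$, because by definition $\tau_M\lambda_{2,M}(v,d)=T(0_M)(v)+_{TM}\lambda_M(d)$; the identity $\tau\circ\lambda=\lambda$ only covers $v=0$.

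The claimed identity $\ell(W,V)=-\tau_M\circ\ell(V,W)$ is false for either negation on $T^2M$. On $\R^n$, in the coordinates of Example~\ref{ex:euclidean-vertical-connection} and with $c=d\bar W(\bar V)-d\bar V(\bar W)$, one finds $\ell(V,W)=(x,\bar W,0,c)$, $\ell(W,V)=(x,\bar V,0,-c)$ and $\tau_M\ell(V,W)=(x,0,\bar W,c)$. What the flip actually does is turn the $\pi_{TM}$-difference defining $\ell(V,W)$ into a $T(\pi_M)$-difference of the two terms of $\ell(W,V)$. Comparing these two kinds of difference is exactly the step your argument omits.

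The missing ingredient is the $\tau$-invariance of the strong difference. If $a,b\in T^2M$ have the same $\pi_{TM}$-projections and the same $T(\pi_M)$-projections, then $p(\tau_M a-\tau_M b)=p(a-b)$, all differences taken over $\pi_{TM}$. Take $a=T(W)\circ V$ and $b=\tau_M\circ T(V)\circ W$. Then $\ell(V,W)=a-b$ and $\ell(W,V)=\tau_M b-\tau_M a$, so the lemma gives $[W,V]=-[V,W]$.

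To prove the lemma, proceed as follows.
\begin{enumerate}
\item Use $\tau_M\circ T(0_M)=0_{TM}$, $\tau\circ\lambda=\lambda$, and the fact that $\tau$ exchanges $+_{TM}$ and $T(+_M)$. These turn the definition into $\lambda_{2,M}(v,d)=0_{TM}(v)\,T(+_M)\,\lambda_M(d)$.
\item Write $a-b=\lambda_{2,M}(W,d)$ and $b=b\,T(+_M)\,T(0_M)(V)$.
\item Apply the interchange law between $+_{TM}$ and $T(+_M)$, which is naturality of $+$ at the morphism $+_M$. This gives $a=b\,T(+_M)\,\bigl(T(0_M)(V)+_{TM}\lambda_M(d)\bigr)$.
\item Apply $\tau_M$ to obtain $\tau_M a=\tau_M b+_{TM}\lambda_{2,M}(V,d)$.
\end{enumerate}

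The same interchange law is needed in your bilinearity step. There $T(W_1+W_2)\circ V$ is a $T(+_M)$-sum over $V$, not a $+_{TM}$-sum, since its two summands have different $\pi_{TM}$-projections $W_1$ and $W_2$.

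Your Jacobi sketch also miscounts. $[U,[V,W]]$ involves only the four orderings of $U,V,W$ in which $U$ comes first or last. So the cyclic sum has twelve terms, each ordering appearing twice, not eighteen. Moreover, the braid relation alone does not make these terms cancel: they lie over different base points and are combined in different bundle structures on $T^3M$. Handling this is the real content of \cite{CC15}.
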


Consequently, $\mathfrak{X}(M)$ is an $\R$-Lie algebra. The bracket is
moreover compatible with the action on functions and with the
$C^\infty(M,\R)$-module structure of Lemma~\ref{lem:vf-module}.

\begin{prop}[{\cite[Propositions~2.25, 2.27]{AB}}]\label{prop:leibniz-bracket}
For any $f \colon M \to \R$ and $V, W \in \mathfrak{X}(M)$,
\begin{align*}
    [V, W] \cdot f = V \cdot (W \cdot f) - W \cdot (V \cdot f),
    \qquad
    [V, fW] = (V \cdot f)W + f[V, W].
\end{align*}
\end{prop}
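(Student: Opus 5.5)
Both identities are equalities of smooth functions, respectively of sections $M \to TM$. My plan is to test them against representatives, exploiting the fact that all structure maps are computed plotwise.

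\textbf{First identity.} By Proposition~\ref{prop:pr2-Tf}, $[V,W]\cdot f = d(f)\circ[V,W]$, and $d(f)$ is fibrewise linear. I would apply $T(d f)$, or more concretely the map $\pr_2\circ T(\pr_2\circ T f)$, to the defining equation
\[
\ell(V,W)=\lambda_{2,M}\circ\langle W,[V,W]\rangle .
\]
Write $g := \pr_2\circ T(f)\colon TM\to\R$, so that $V\cdot f = g\circ V$. Then I compute $T(g)$ on the two pieces.
\begin{itemize}
  \item On $T(W)\circ V$, the chain rule gives $T(g)\circ T(W)\circ V = T(g\circ W)\circ V$, whose second component is $V\cdot(W\cdot f)$.
  \item On $\tau_M\circ T(V)\circ W$, I need $\pr_2\circ T(g)\circ\tau_M = \pr_2\circ T(g)$. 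This is the symmetry of second derivatives of $f$. I would check it on representatives $T^2P(u,v_0,v_1,v_{01})$ using (E2), where it reduces to the symmetry of the Hessian of $f\circ P$. Granting this, the second piece gives $W\cdot(V\cdot f)$.
  \item On the vertical lift, fibrewise linearity of $g$ (Lemma~\ref{lem:cotangent-omega}) should give $\pr_2\circ T(g)\circ\lambda_{2,M}(w,x) = g(x)$, again checked on representatives $(u,0,0,v)$ together with the $T0$-part. Applied to $\langle W,[V,W]\rangle$, this yields $[V,W]\cdot f$.
\end{itemize}
The difference in the bundle $\pi_{TM}$ is carried to a difference in $\R$, because $\pr_2\circ T(g)$ is additive along $\pi_{TM}$. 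This additivity is again verified on representatives via (E1) applied to $TM$, which is available by (E5).

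\textbf{Second identity.} I would compute $\ell(V,fW)$. Since $fW = \kappa_M\circ\langle f,W\rangle$, the chain rule gives
\[
T(fW)\circ V = T\kappa_M\circ\langle T f\circ V,\; T W\circ V\rangle .
\]
The derivative of $\kappa$ splits, using the tangent-category axiom relating $T\kappa$, $\kappa_T$ and $\lambda$ with scalar multiplication, as
\[
\kappa_{TM}\bigl(f,\,TW\circ V\bigr) + \lambda\text{-term}\bigl((V\cdot f)\,W\bigr).
\]
The flip term $\tau_M\circ T(V)\circ(fW)$ equals $\kappa$-scaled $\tau_M T(V) W$, by naturality of the scalar structure with respect to $\tau$ (axiom of scalar $\R$-multiplication). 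Subtracting, and then using uniqueness of the factorization through the isomorphism $\lambda_{2,M}$ of Proposition~\ref{prop:lambda2-iso} together with its compatibility with $\kappa$ and $+$, gives second component $(V\cdot f)W + f[V,W]$.

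\textbf{Main obstacle.} The main obstacle is making the Leibniz splitting of $T(\kappa)$ rigorous. I expect to verify it on representatives $[P,u,v]$ using Lemma~\ref{lem:TP-linear} and (E1)--(E2). These hold in $\hat T$ on $\mathsf{Euc}$ by the ordinary product rule, and they descend by joint epimorphy of the cocone maps.
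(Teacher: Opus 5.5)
\paragraph{Review.} The paper gives no proof of this proposition; it imports both identities from \cite{AB}, where they hold in any Cartesian tangent category with scalar multiplication. Your argument for the first identity is a sound direct proof. The $\tau$-invariance you need, $\pr_2\circ T(g)\circ\tau_M=\pr_2\circ T(g)$, does not require plots. Naturality of $\tau$ at $f$ reduces it to $\pr_2\circ T(\pr_2)\circ\tau_\R=\pr_2\circ T(\pr_2)$ on $T^2\R\cong\R^4$, where both sides return the $v_{01}$-coordinate. If you do test on representatives $T^2P(\xi)$, you must also show that these exhaust $T^2M$. That is true, since plots of $TM$ locally factor through the cocone maps $T(P)$, but it is not among the paper's lemmas.

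The second identity goes wrong at exactly the step you call the main obstacle: the splitting you write down is false, not merely unproved. In the coordinates of Example~\ref{ex:euclidean-vertical-connection}:
\begin{itemize}
  \item $T(fW)\circ V=(x,\,f\bar W,\,\bar V,\,(V\cdot f)\bar W+f\,d\bar W(\bar V))$ lies over $fW$ for $\pi_{TM}$.
  \item $\kappa_{TM}\circ\langle f,T(W)\circ V\rangle=(x,\,\bar W,\,f\bar V,\,f\,d\bar W(\bar V))$ lies over $W$.
  \item A plain vertical lift of $(V\cdot f)W$ lies over the zero vector.
\end{itemize}
So your sum is not even formed in a single fibre of $\pi_{TM}$. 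For the same reason, $\tau_M\circ T(V)\circ(fW)=\tau_M\circ\kappa_{TM}\circ\langle f,T(V)\circ W\rangle$ is not $\kappa_{TM}\circ\langle f,\tau_M\circ T(V)\circ W\rangle$. Moreover $\kappa_{TM}(r,\lambda_{2,M}(w,z))=\lambda_{2,M}(w,rz)$ leaves the first slot unscaled, so your final factorization can never produce $\lambda_{2,M}\circ\langle fW,\dots\rangle$.

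The correct scalar multiplication is the partial tangent $T_{(2)}\kappa_M=\tau_M\circ\kappa_{TM}\circ(\id_\R\times\tau_M)$, which is the scalar multiplication of the bundle $T(\pi_M)$. With it, Proposition~\ref{prop:decomposition-tangent-morphism} and the second diagram of Definition~\ref{def:scalar-multiplication} give, exactly as in the proof of Proposition~\ref{prop:covariant-derivative-properties}(4),
\[
T(fW)\circ V=\lambda_{2,M}\circ\langle fW,(V\cdot f)W\rangle+_{TM}T_{(2)}\kappa_M\circ\langle f,T(W)\circ V\rangle .
\]
The remaining steps are:
\begin{enumerate}
  \item Naturality of $\kappa$ at $V$ and the third diagram give $\tau_M\circ T(V)\circ(fW)=T_{(2)}\kappa_M\circ\langle f,\tau_M\circ T(V)\circ W\rangle$.
  \item $T_{(2)}\kappa_M(r,-)$ is additive for $+_{TM}$.
  \item $T_{(2)}\kappa_M(r,\lambda_{2,M}(w,z))=\lambda_{2,M}(rw,rz)$, by the first and third diagrams of Definition~\ref{def:scalar-multiplication} and naturality of $\kappa$ at $0_M$.
\end{enumerate}
Together these yield $\ell(V,fW)=\lambda_{2,M}\circ\langle fW,(V\cdot f)W+f[V,W]\rangle$, and injectivity of $\lambda_{2,M}$ (Proposition~\ref{prop:lambda2-iso}) finishes the proof. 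So the Leibniz splitting is already available categorically and needs no check on representatives. Checking your $\kappa_{TM}$ version on representatives would simply fail.
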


\section{Connections and Covariant Derivatives}\label{sec:connections}

Throughout this section, $M$ denotes an elastic diffeological space, and we
freely use the tangent structure of Theorem~\ref{thm:tangent-structure-2}.

\subsection{Vertical (Affine) Connections and Covariant Derivatives}%
\label{subsec:vertical-cov}

Connections in tangent categories are described by morphisms on the second tangent bundle rather than directly by covariant derivatives. This viewpoint is particularly suitable in the diffeological setting, where tangent spaces need not admit local trivializations and many classical constructions are unavailable. We begin with the notion of a vertical connection and then derive the corresponding covariant derivative.

\begin{defn}[{\cite[Definitions~3.2, 3.3]{CC17}, modified}]\label{def:vertical-connection}
  A morphism $K \colon T^2M \to TM$ satisfying the following conditions
  is called a \textbf{vertical (affine) connection} on $M$:
  \begin{enumerate}[(a)]
    \item $K \circ \lambda_M = \id_{TM}$
    \item $\pi_M \circ K = \pi_M \circ \pi_{TM}$
    \item $\lambda_M \circ K = T(K) \circ \lambda_{TM}$
    \item $\lambda_M \circ K = T(K) \circ \tau_{TM} \circ T(\lambda_M)$
    \item $K \circ \kappa_{TM} = \kappa_M \circ (\id_{\R} \times K)$
    \item $K \circ T_{(2)}\kappa_M = \kappa_M \circ (\id_{\R} \times K)$
  \end{enumerate}
\end{defn}

\begin{rem}\label{rem:K-linear-bundle}
  By definition, the vertical connection $K$ yields linear bundle morphisms
  (Definition~\ref{def:differential-bundle-morphism}):
  \[
    (K, \pi_M) \colon (T(\pi_M) \colon T^2M \to TM) \to (\pi_M \colon TM \to M),
  \]
  \[
    (K, \pi_M) \colon (\pi_{TM} \colon T^2M \to TM) \to (\pi_M \colon TM \to M).
  \]
  Consequently, $K$ is additive.
\end{rem}

\begin{rem}\label{rem:scalar-axiom}
  In general tangent structures as considered in \cite{CC14, CC15, CC17},
  scalar multiplication on bundles is not part of the axioms.
  In elastic diffeology, however, the structure of scalar $\R$-multiplication
  is present. Taking into account compatibility with connections on tangent spaces
  of manifolds, we have added preservation of scalar multiplication to the axioms
  for both vertical and horizontal connections: axioms~(e) and~(f) for vertical connections,
  and axioms~(e) and~(f) of Definition~\ref{def:horizontal-connection} for
  horizontal connections below. These axioms ensure that the induced covariant derivative
  is linear with respect to scalar multiplication and satisfies the classical Leibniz identities.
\end{rem}

\begin{rem}\label{rem:K-subduction}
  Since $K$ is a split epimorphism, it is a strong epimorphism.
  Strong epimorphisms in the category $\mathsf{Diff}$ of diffeological spaces
  are subductions, so $K$ is a subduction.
\end{rem}

\begin{ex}[Vertical connections on Euclidean space]\label{ex:euclidean-vertical-connection}
Let $M = \R^n$, identified with $TM = \R^n \times \R^n$ and
$T^2M = \R^n \times \R^n \times \R^n \times \R^n$, with coordinates
$(u, v_0, v_1, v_{01}) \in T^2\R^n$ so that
$\pi_{TM}(u, v_0, v_1, v_{01}) = (u, v_0)$ and
$T(\pi_M)(u, v_0, v_1, v_{01}) = (u, v_1)$.
For a smooth family $\Gamma(u)$ of bilinear maps
$\R^n \times \R^n \to \R^n$, the morphism
\begin{align*}
    K_\Gamma(u, v_0, v_1, v_{01}) = \bigl(u,\; v_{01} + \Gamma(u)(v_0, v_1)\bigr)
\end{align*}
is a vertical connection (see \cite[Example~3.6]{CC17}). The bilinearity of
$\Gamma(u)$ is essential: the scalar axioms (e) and (f) amount to the
homogeneity of $\Gamma(u)$ in each of its two arguments.
The special case $\Gamma = 0$, namely
$K_0(u, v_0, v_1, v_{01}) = (u, v_{01})$, is called the
\emph{canonical affine vertical connection}.
Moreover, since $\tau_M(u, v_0, v_1, v_{01}) = (u, v_1, v_0, v_{01})$, we have
$K_\Gamma \circ \tau_M = K_\Gamma$ if and only if each $\Gamma(u)$ is
symmetric; thus torsion-freeness (Definition~\ref{def:torsion-free} below)
recovers the classical symmetry of the Christoffel symbols.
\end{ex}

\begin{defn}[{\cite[Definition~3.15]{CC17}}]\label{def:covariant-derivative}
  Let $K$ be a vertical connection on $M$ and let $X, Y \in \mathfrak{X}(M)$.
  The operation
  \begin{align*}
    \nabla^K \colon \mathfrak{X}(M) \times \mathfrak{X}(M) \to \mathfrak{X}(M),\qquad
    \nabla^K_X Y := K \circ T(Y) \circ X
  \end{align*}
  is called the \textbf{covariant derivative for $K$}.
\end{defn}

\begin{rem}\label{rem:cov-deriv-vs-connection}
  In ordinary Riemannian geometry, connections and covariant derivatives
  determine each other. In a tangent category this need not be the case:
  in the absence of local triviality, the family of morphisms
  $T(Y) \circ X$, for varying $X, Y \in \mathfrak{X}(M)$, need not be
  jointly epic on $T^2M$, so the covariant derivative $\nabla^K$ need not
  determine the vertical connection $K$.
\end{rem}

\begin{defn}[{\cite[Definition~3.24]{CC17}}]\label{def:torsion-free}
  A vertical connection $K$ on $M$ is said to be \textbf{torsion-free} if
  \begin{align*}
    K \circ \tau_M = K.
  \end{align*}
\end{defn}

\begin{prop}[{\cite[Corollary~3.32]{CC17}}]\label{prop:torsion-free-bracket}
  If a vertical connection $K$ on $M$ is torsion-free, then for all
  $X, Y \in \mathfrak{X}(M)$,
  \begin{align*}
    \nabla^K_X Y - \nabla^K_Y X = [X, Y].
  \end{align*}
\end{prop}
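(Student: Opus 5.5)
We prove the identity directly from the definition of the bracket, using the vertical connection axioms to evaluate $K$ on the morphism $\ell(X,Y)$.

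\textbf{Step 1.} Recall from Definition~\ref{def:lie-bracket} that
\[
  \ell(X,Y) = T(Y)\circ X - \tau_M\circ T(X)\circ Y = \lambda_{2,M}\circ\langle Y,[X,Y]\rangle .
\]

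\textbf{Step 2.} Apply $K$ to both sides. The subtraction in $\ell(X,Y)$ takes place in the bundle $\pi_{TM}\colon T^2M\to TM$. By Remark~\ref{rem:K-linear-bundle}, $K$ is a linear bundle morphism over $\pi_M$ for this bundle structure, so it preserves sums and negatives there. The negatives come from $\kappa_{TM}(-1,-)$ together with axiom~(e). Hence the left-hand side becomes
\[
  K\circ T(Y)\circ X - K\circ\tau_M\circ T(X)\circ Y .
\]
Torsion-freeness gives $K\circ\tau_M=K$, so this equals
\[
  \nabla^K_XY-\nabla^K_YX .
\]

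\textbf{Step 3.} It remains to show that
\[
  K\circ\lambda_{2,M}\circ\langle Y,[X,Y]\rangle=[X,Y].
\]
More generally, we show $K\circ\lambda_{2,M}(v,w)=w$. Unwinding Definition~\ref{def:vertical-lift-extension},
\[
  \lambda_{2,M}(v,w)=\tau_M\bigl(T0_M(v)+_{TM}\lambda_M(w)\bigr).
\]
Using $K\circ\tau_M=K$ again, we drop $\tau_M$, which leaves
\[
  K\bigl(T0_M(v)+_{TM}\lambda_M(w)\bigr).
\]
Here the addition $+_{TM}$ is in the bundle $T(\pi_M)$. By Remark~\ref{rem:K-linear-bundle}, $K$ is also linear for the bundle $T(\pi_M)\colon T^2M\to TM$, so it splits the sum as
\[
  K(T0_M(v))+K(\lambda_M(w)).
\]
Axiom~(a) gives $K\lambda_M(w)=w$. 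It remains to show $K(T0_M(v))=0$.

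\textbf{Step 4, the main obstacle.} We need $K\circ T(0_M)=0_M\circ\pi_M$. The vector $T0_M(v)$ is the zero of the $\pi_{TM}$-bundle over $0_M(\pi_M v)$. Indeed, naturality of $0$ gives $0_{TM}\circ 0_M=T(0_M)\circ 0_M$, and combining this with the tangent-category identities yields
\[
  T0_M(v)=T(0_M)(v)=0_{TM}(0_M(\pi_M v))\text{ up to }T(\pi_M)\text{-fibre}.
\]
If that identification is awkward, I will argue differently: $K$ preserves zeros of the $T(\pi_M)$-bundle because it is linear for that bundle, and the zero section of $T(\pi_M)$ is precisely $T(0_M)$. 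Therefore
\[
  K\circ T(0_M)=0_M\circ\pi_M .
\]
This is the cleaner route.

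\textbf{Conclusion.} Substituting back gives
\[
  K\lambda_{2,M}(v,w)=0+w=w ,
\]
and combining with Step 2,
\[
  \nabla^K_XY-\nabla^K_YX=[X,Y].
\]
The only delicate bookkeeping is keeping track of which of the two bundle structures on $T^2M$ each addition lives in. Remark~\ref{rem:K-linear-bundle} covers both.
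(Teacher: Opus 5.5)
Your proof is correct. The paper gives no argument of its own here; it imports the result from \cite[Corollary~3.32]{CC17}, a general tangent-category statement. Your derivation is therefore a self-contained proof in the paper's axioms. You apply $K$ to $\ell(X,Y)=\lambda_{2,M}\circ\langle Y,[X,Y]\rangle$. On the left, linearity of $K$ for $\pi_{TM}$ together with $K\circ\tau_M=K$ gives $\nabla^K_XY-\nabla^K_YX$. On the right, $K\circ\lambda_{2,M}=\pr_2$ gives $[X,Y]$. Only axioms (a)--(d) are needed. Axiom (e) is not needed for negatives, because an additive morphism of bundles of abelian groups preserves them automatically.

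Two bookkeeping slips should be corrected, though neither breaks the argument.

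\textbf{The bundle for the sum in $\lambda_{2,M}$.} The addition $+_{TM}$ in $\lambda_{2,M}(v,w)=\tau_M\bigl(T0_M(v)+_{TM}\lambda_M(w)\bigr)$ belongs to the bundle $\pi_{TM}$, not to $T(\pi_M)$. Both summands lie over $0_M(\pi_M v)$ under $\pi_{TM}$. Under $T(\pi_M)$ they go to $v$ and to a zero vector respectively. In the coordinates of Example~\ref{ex:euclidean-vertical-connection}, with $v=(u,a)$ and $w=(u,b)$, they are $(u,0,a,0)$ and $(u,0,0,b)$. Your splitting of $K$ is still valid, but it comes from $\pi_{TM}$-linearity (axioms (b), (c)).

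\textbf{The first claim in Step~4.} You assert that $T0_M(v)$ is the $\pi_{TM}$-zero over $0_M(\pi_M v)$. This is false, since $(u,0,a,0)\neq(u,0,0,0)$. Only your second route is correct: $T(0_M)$ is the zero section of the bundle $T(\pi_M)$, and $K$ is linear for that bundle, so $K\circ T(0_M)=0_M\circ\pi_M$.

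\textbf{An avoidable use of torsion-freeness.} You invoke torsion-freeness a second time to drop $\tau_M$ in Step~3, but this is unnecessary. The flip $\tau_M$ carries the $\pi_{TM}$-addition to the $T(\pi_M)$-addition, and it satisfies $\tau_M\circ T(0_M)=0_{TM}$ and $\tau_M\circ\lambda_M=\lambda_M$. Hence $K\circ\lambda_{2,M}=\pr_2$ holds for every vertical connection. This gives the general identity $\nabla^K_XY-\nabla^K_YX-[X,Y]=(K\circ\tau_M-K)\circ T(X)\circ Y$, and torsion-freeness enters only at the final step.
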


\begin{prop}\label{prop:covariant-derivative-properties}
  Let $K$ be a vertical connection on $M$,
  let $X, Y, Z \in \mathfrak{X}(M)$, and let $f \colon M \to \R$ be a smooth
  function. The covariant derivative $\nabla^K$ satisfies the following:
  \begin{enumerate}
    \item $\nabla^K_{X+Y}Z = \nabla^K_X Z + \nabla^K_Y Z$
    \item $\nabla^K_{fX}Y = f\nabla^K_X Y$
    \item $\nabla^K_X (Y + Z) = \nabla^K_X Y + \nabla^K_X Z$
    \item $\nabla^K_X (fY) = (Xf)Y + f\nabla^K_X Y$.
  \end{enumerate}
\end{prop}

\begin{proof}
  \begin{enumerate}
    \item Since $K$ and $T(Z)$ are additive,
    \begin{align*}
      \nabla^K_{X+Y} Z
        = K \circ T(Z) \circ (X + Y)
        = K \circ T(Z) \circ X + K \circ T(Z) \circ Y
        = \nabla^K_X Z + \nabla^K_Y Z.
    \end{align*}
    \item By naturality of the scalar multiplication $\kappa$ and
      the (additional) axiom (e) for vertical connections,
    \begin{align*}
      \nabla^K_{fX}Y
        &= K \circ T(Y) \circ \kappa_M(f, X)
         = K \circ \kappa_{TM}(f, T(Y)\circ X) \\
        &= \kappa_M(f, K \circ T(Y) \circ X)
         = f\nabla^K_X Y.
    \end{align*}
    \item Since the tangent functor preserves addition and $K$ is additive,
    \begin{align*}
      \nabla^K_X (Y + Z)
        = K \circ T(Y+Z) \circ X
        = K \circ T(Y) \circ X + K \circ T(Z) \circ X
        = \nabla^K_X Y + \nabla^K_X Z.
    \end{align*}
    \item By the definition of the covariant derivative,
      $\nabla^K_X (fY) = K \circ T(\kappa_M \circ (f, Y)) \circ X$.
      By functoriality of $T$ and
      Proposition~\ref{prop:decomposition-tangent-morphism},
    \begin{align*}
     & K \circ T(\kappa_M \circ (f, Y)) \circ X \\
        &= K \circ \bigl(T_{(1)} \kappa_M \circ (\id_{T\R} \times \pi_{TM})
            \circ \chi_{\R,TM}
           + T_{(2)} \kappa_M \circ (\pi_{\R} \times \id_{TM})
            \circ \chi_{\R,TM}\bigr)
          \circ (T(f)\circ X, T(Y) \circ X) \\
        &= K \circ T_{(1)} \kappa_M \circ (T(f)\circ X, \pi_{TM} \circ T(Y) \circ X)
           + K \circ T_{(2)} \kappa_M \circ (\pi_{\R} \circ T(f)\circ X, T(Y) \circ X) \\
        &= K \circ T_{(1)} \kappa_M \circ (T(f)\circ X, Y \circ \pi_M \circ X)
           + K \circ T_{(2)} \kappa_M \circ (f \circ \pi_M \circ X, T(Y) \circ X) \\
        &= K \circ T_{(1)} \kappa_M \circ (T(f)\circ X, Y)
           + K \circ T_{(2)} \kappa_M \circ (f, T(Y) \circ X).
    \end{align*}
    We examine each term.
    For $K \circ T_{(1)} \kappa_M \circ (T(f)\circ X, Y)$,
    the second diagram in Definition~\ref{def:scalar-multiplication} gives
    \begin{align*}
      & K \circ T_{(1)} \kappa_M \circ (T(f)\circ X, Y) \\
        &= K \circ \lambda_{2, M} \circ
          \bigl(\kappa_M \circ (\pr_1, \pr_3),\; \kappa_M \circ (\pr_2, \pr_3)\bigr)
          \circ (T(f)\circ X, Y) \\
        &= K \circ \tau_M \circ {+}_{TM} \circ (T0_M \times_{0_M} \lambda_M)
          \circ \bigl(\kappa_M \circ (\pr_1 \circ T(f) \circ X, Y),\;
                 \kappa_M \circ (\pr_2 \circ T(f) \circ X, Y)\bigr) \\
        &= K \circ \tau_M \circ \bigl(0 + \lambda_M \circ \kappa_M
           \circ (\pr_2 \circ T(f) \circ X, Y)\bigr) \\
        &= K \circ \tau_M \circ \lambda_M \circ \kappa_M \circ (df \circ X, Y),
    \end{align*}
    where the last equality uses $\pr_2 \circ T(f) = d(f)$
    (Proposition~\ref{prop:pr2-Tf}).
    Using $\tau_M \circ \lambda_M = \lambda_M$
    (Section~\ref{sec:RosickysAxioms}) and axiom (a),
    we obtain
    \begin{align*}
      K \circ T_{(1)} \kappa_M \circ (T(f)\circ X, Y)
        = \kappa_M \circ (df \circ X, Y) = (Xf)Y,
    \end{align*}
    the last equality by Definition~\ref{def:vf-action}.
    For $K \circ T_{(2)} \kappa_M \circ (f, T(Y) \circ X)$,
    the axiom (f) in Definition~\ref{def:vertical-connection} gives
    \begin{align*}
     K \circ T_{(2)} \kappa_M \circ (f, T(Y) \circ X)
       &= \kappa_{M} \circ  (\id_\R \times K) \circ (f, T(Y) \circ X)\\
       &= \kappa_{M} \circ (f, K \circ T(Y) \circ X) =  f\nabla^K_X Y.
    \end{align*}
    Therefore $\nabla^K_X (fY) = (Xf)Y + f\nabla^K_X Y$.
  \end{enumerate}
\end{proof}

\begin{ex}\label{ex:covariant-derivative}
 Let $M = \R^n$ with the vertical connection $K_\Gamma$ of
Example~\ref{ex:euclidean-vertical-connection}. Write vector fields as
$X(x) = (x, \bar{X}(x))$ and $Y(x) = (x, \bar{Y}(x))$ with
$\bar{X}, \bar{Y} \colon \R^n \to \R^n$. The tangent map of $Y$ is
$T(Y)(x, w) = \bigl(x, \bar{Y}(x), w, d\bar{Y}_x(w)\bigr)$, so
\begin{align*}
  (T(Y) \circ X)(x)
    = \bigl(x,\; \bar{Y}(x),\; \bar{X}(x),\; d\bar{Y}_x(\bar{X}(x))\bigr),
\end{align*}
and applying $K_\Gamma$ yields
\begin{align*}
  (\nabla^{K_\Gamma}_X Y)(x)
    = \bigl(x,\; d\bar{Y}_x(\bar{X}(x))
      + \Gamma(x)(\bar{Y}(x), \bar{X}(x))\bigr).
\end{align*}
For the canonical connection $K_0$ this is the directional derivative
$\nabla^{K_0}_X Y = d\bar{Y}(\bar{X})$, and in general it is the classical
local expression of a covariant derivative through Christoffel symbols.
Note the order of the arguments of $\Gamma(x)$: the second component of
$T(Y) \circ X$ (over $\pi_{TM}$) carries the value of $Y$ and the third
(over $T(\pi_M)$) that of $X$; when $\Gamma(x)$ is symmetric, i.e.\ when
$K_\Gamma$ is torsion-free, the order is immaterial.
\end{ex}

\subsection{Horizontal Connections, Full Connections, and Effective Connections}%
\label{subsec:full-connection}

\begin{defn}[{\cite[Definitions~4.5, 4.6]{CC17}, modified}]\label{def:horizontal-connection}
An \textbf{affine horizontal connection} on $M$ is a
smooth morphism $H \colon TM \times_M TM \to T^2M$ satisfying the following conditions
with respect to the projections $\pr_1, \pr_2 \colon TM \times_M TM \to TM$.
We denote the scalar multiplications on the fibre product $TM \times_M TM$
induced by $\kappa_M$ on each factor by
\begin{align*}
  \kappa^{(1)}, \kappa^{(2)} \colon \R \times (TM \times_M TM) \to TM \times_M TM,
\end{align*}
that is, $\kappa^{(1)}(r, (v_0, v_1)) = (\kappa_M(r, v_0), v_1)$ and
$\kappa^{(2)}(r, (v_0, v_1)) = (v_0, \kappa_M(r, v_1))$.
\begin{enumerate}[(a)]
    \item $T(\pi_M) \circ H = \pr_1$
    \item $\pi_{TM} \circ H = \pr_2$
    \item $\lambda_{TM} \circ H = T(H) \circ (\lambda_M \times 0_{TM})$
    \item $\tau_{TM} \circ T(\lambda_M) \circ H = T(H) \circ (0_{TM} \times \lambda_M)$
    \item $H \circ \kappa^{(1)} = \kappa_{TM} \circ (\id_{\R} \times H)$
    \item $H \circ \kappa^{(2)} = T_{(2)}\kappa_M \circ (\id_{\R} \times H)$
\end{enumerate}
\end{defn}

\begin{rem}\label{rem:horizontal-scalar-axiom}
  Axioms~(e) and~(f) express the preservation of scalar multiplication with respect
  to each of the two tangent bundle structures on $T^2M$:
  $\pi_{TM} \colon T^2M \to TM$ (with scalar multiplication $\kappa_{TM}$) and
  $T(\pi_M) \colon T^2M \to TM$ (with scalar multiplication $T_{(2)}\kappa_M$).
\end{rem}

\begin{ex}[{\cite[Example~3.6]{LW}}]%
\label{ex:euclidean-horizontal-connection}
In the coordinates of Example~\ref{ex:euclidean-vertical-connection}, the
\emph{canonical horizontal connection}
$H \colon T\R^n \times_{\R^n} T\R^n \to T^2\R^n$ is given by
\begin{align*}
    H\bigl((u, v_0),\, (u, v_1)\bigr) = (u, v_1, v_0, 0).
\end{align*}

One verifies both scalar axioms directly: $\kappa_{TM}$ scales the $(v_1, v_{01})$ components, which carry the first argument by axiom~(a), while $T_{(2)}\kappa_M$ scales the $(v_0, v_{01})$ components, which carry the second argument by axiom~(b); the last component of $H$ vanishes.
\end{ex}

\begin{defn}[{\cite[Definition~5.2]{CC17}}]\label{def:full-connection}
A pair $(K, H)$ of a vertical connection $K \colon T^2M \to TM$ and a horizontal
connection $H \colon TM \times_M TM \to T^2M$ on $M$
is called a \textbf{full connection} if the
following two conditions hold:
\begin{enumerate}[(a)]
    \item $K \circ H = 0_M \circ \pi_M \circ \pr_1$,
    \item $\mu \circ \langle K, \pi_{TM} \rangle + H \circ U = \id_{T^2M}$.
\end{enumerate}
Here
\begin{align*}
  \mu &\coloneqq T({+}) \circ ((\lambda \circ \pr_1) \times (0 \circ \pr_2))
    \colon TM \times_M TM \to T^2M, \\
  U &\coloneqq \langle T(\pi_M), \pi_{TM} \rangle
    \colon T^2M \to TM \times_M TM;
\end{align*}
condition (b) states that every element of $T^2M$ is the sum of its
vertical and horizontal parts.
\end{defn}

\begin{prop}[{\cite[Proposition~5.8]{CC17}}]\label{prop:full-connection-fibre-product}
  Let $(K, H)$ be a full connection on $M$.
  Then the following is a fibre product diagram:
  \begin{equation}\label{eq:cx_ld}
\xymatrix{
            & T^2M \ar[dl]_{\pi_{TM}} \ar[d]|{T(\pi_M)} \ar[dr]^K & \\
TM \ar[dr]_{\pi_M} & TM \ar[d]|{\pi_M}                          & TM \ar[dl]^{\pi_M}\\
   & M                                                 &
}
\end{equation}
  That is,
  \begin{align*}
    T^2M \cong TM \times_M TM \times_M TM.
  \end{align*}
\end{prop}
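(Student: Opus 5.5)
We prove that $T^2M$, with the three maps $\pi_{TM}$, $T(\pi_M)$ and $K$, is a fibre product over $M$. First we check that the three maps land in a common base point, so that $\Phi \coloneqq \langle \pi_{TM}, T(\pi_M), K\rangle \colon T^2M \to TM\times_M TM\times_M TM$ is defined. For this we use $\pi_M\circ T(\pi_M) = \pi_M\circ\pi_{TM}$ (naturality of $\pi$) and axiom~(b) of Definition~\ref{def:vertical-connection}. The plan is then to write down an explicit inverse using the horizontal connection and the map $\mu$ of Definition~\ref{def:full-connection}:
\begin{equation*}
  \Psi(v_0,v_1,w) \coloneqq \mu(w,v_0) + H(v_1,v_0),
\end{equation*}
where the sum is taken in the bundle $\pi_{TM}\colon T^2M\to TM$ (Remark~\ref{rem:negatives}). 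Both summands lie over $v_0$: we have $\pi_{TM}\circ\mu = \pr_2$ because $\pi_{TM}\circ T(+) = +\circ(\pi\times\pi)$ and $\pi_{TM}\lambda = 0\pi$, and we have $\pi_{TM}\circ H = \pr_2$ by axiom~(b) of Definition~\ref{def:horizontal-connection}. So $\Psi$ is a smooth morphism.

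Condition~(b) of Definition~\ref{def:full-connection} says exactly that $\Psi\circ\Phi = \mathrm{id}_{T^2M}$, since $U\circ$ and $\langle K,\pi_{TM}\rangle$ recover the arguments. For $\Phi\circ\Psi = \mathrm{id}$ we compute the three components of $\Phi\circ\Psi$.
\begin{itemize}
\item[(i)] $\pi_{TM}\circ\Psi = \pr_1$, from the base point computation above.
\item[(ii)] $T(\pi_M)\circ\Psi = v_1$. Here $T(\pi_M)$ is additive with respect to the $\pi_{TM}$-addition; this is the linearity of $T\pi$ as a bundle morphism, part of the tangent-category axioms. We also have $T(\pi_M)\circ\mu(w,v_0) = T(\pi_M)\circ T(+)(\lambda w, 0v_0)$, which reduces to $0$ via $T\pi\circ\lambda = 0\circ\pi$. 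Finally $T(\pi_M)\circ H = \pr_1$ gives $v_1$.
\item[(iii)] $K\circ\Psi = w$. $K$ is additive for $\pi_{TM}$ by Remark~\ref{rem:K-linear-bundle}. Condition~(a) of the full connection kills $K\circ H$, leaving a zero vector. It then remains to show $K\circ\mu(w,v_0) = w$.
\end{itemize}

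The main obstacle is this last identity $K\circ\mu(w,v_0)=w$. I would attack it by recognising $\mu(w,v_0) = T(+)(\lambda w, 0 v_0)$ as $\lambda(w) +_{T(\pi)}$-sum with $T0(v_0)$, i.e.\ a sum in the other bundle structure $T(\pi_M)\colon T^2M\to TM$. Then I would use that $K$ is also linear for that structure (Remark~\ref{rem:K-linear-bundle}), together with $K\circ\lambda = \mathrm{id}$ (axiom~(a)) and $K\circ T0 = 0\circ\pi$. The latter should follow from linearity of $K$ over $T(\pi)$, since $T0$ is the zero section of that bundle. This yields $w + 0 = w$. An alternative I would check is simply citing \cite[Proposition~5.8]{CC17}, which works in any tangent category; the scalar axioms play no role.

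Finally, because $\Phi$ and $\Psi$ are mutually inverse morphisms in $\mathsf{Elast}$, the universal property of the fibre product transfers directly to $T^2M$, giving the diagram \eqref{eq:cx_ld}.
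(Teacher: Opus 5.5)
Your strategy is the standard explicit-inverse argument behind \cite[Proposition~5.8]{CC17}; the paper gives no proof of its own and only cites that result. The overall structure is sound: $\Psi(v_0,v_1,w)=\mu(w,v_0)+_{TM}H(v_1,v_0)$, with $\Psi\circ\Phi=\id$ from axiom (b) of full connections and $\Phi\circ\Psi=\id$ checked componentwise by additivity. Steps (i) and (ii) are fine, as is the reduction in (iii) to $K\circ\mu(w,v_0)=w$.

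The argument you give for that last identity is wrong as written. In $\mu(w,v_0)=T(+_M)\bigl(\lambda_M(w),\,0_{TM}(v_0)\bigr)$, the second summand is $0_{TM}(v_0)$, the zero section of the bundle $\pi_{TM}$ evaluated at $v_0$. It is not $T(0_M)(v_0)$.

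Viewed as a sum in the bundle $T(\pi_M)\colon T^2M\to TM$, both $\lambda_M(w)$ and $0_{TM}(v_0)$ lie over $0_M(x)$, where $x=\pi_M(v_0)$. This follows from $T(\pi_M)\circ\lambda_M=0_M\circ\pi_M$ and naturality of $0$. By contrast, $T(0_M)(v_0)$ lies over $v_0$. So the sum $\lambda_M(w)+_{T(\pi_M)}T(0_M)(v_0)$ you write is not even defined unless $v_0=0_M(x)$. In the coordinates of Example~\ref{ex:euclidean-vertical-connection} one has $\mu(w,v_0)=(u,v_0,0,w)$, whereas $T(0_M)(v_0)=(u,0,v_0,0)$.

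The repair uses both linear structures of Remark~\ref{rem:K-linear-bundle}, one for the sum and the other for the zero:
\begin{itemize}
\item Linearity of $K$ over $T(\pi_M)$ gives $K\mu(w,v_0)=K\lambda_M(w)+K\,0_{TM}(v_0)$.
\item Axiom (a) gives $K\lambda_M(w)=w$.
\item Linearity of $K$ over $\pi_{TM}$, specifically preservation of its zero section $0_{TM}$, gives $K\,0_{TM}(v_0)=0_M(x)$.
\end{itemize}
The identity $K\circ T(0_M)=0_M\circ\pi_M$ that you invoke is true but is not what is needed.

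With this correction your proof is complete. As you observe, it uses no scalar axioms, and it uses no negatives either, only sums.
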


\begin{rem}\label{rem:theta-bi-scalar}
Recall that $T^2M$ carries two scalar multiplications over $TM$: the
multiplication $\kappa_{TM} \colon \R \times T^2M \to T^2M$ for the bundle
$\pi_{TM}$, and the multiplication $T_{(2)}\kappa_M \colon \R \times T^2M \to T^2M$
for the bundle $T(\pi_M)$, where
\[
  T_{(2)}\kappa_M = \tau_M \circ \kappa_{TM} \circ (\id_{\R} \times \tau_M)
\]
is the partial tangent of $\kappa_M$ (Definition~\ref{def:scalar-multiplication}).
On the fibre product $TM \times_M TM \times_M TM$ we write $\kappa^{(2,3)}$
(resp.\ $\kappa^{(1,3)}$) for the scalar multiplication that scales the second and
third (resp.\ the first and third) factors by $\kappa_M$ and leaves the remaining
factor fixed.

If the vertical connection $K$ satisfies axioms (e) and (f) of
Definition~\ref{def:vertical-connection}, then the isomorphism
$\Theta = \langle \pi_{TM},\, T(\pi_M),\, K \rangle$ of
Proposition~\ref{prop:full-connection-fibre-product} is moreover an isomorphism of
$\R$-modules for \emph{both} structures:
\begin{align*}
  \Theta \circ \kappa_{TM}
    &= \kappa^{(2,3)} \circ (\id_{\R} \times \Theta),
  &
  \Theta \circ T_{(2)}\kappa_M
    &= \kappa^{(1,3)} \circ (\id_{\R} \times \Theta).
\end{align*}
Indeed, the three components are checked separately.
For $\kappa_{TM}$: the projection $\pi_{TM}$ is left fixed (it is the base of
$\kappa_{TM}$), while
\[
  T(\pi_M) \circ \kappa_{TM} = \kappa_M \circ (\id_{\R} \times T(\pi_M))
  \quad\text{(naturality of $\kappa$),}
  \qquad
  K \circ \kappa_{TM} = \kappa_M \circ (\id_{\R} \times K)
  \quad\text{(axiom (e)).}
\]
For $T_{(2)}\kappa_M$: the projection $T(\pi_M)$ is left fixed, while, using
$\pi_{TM} \circ \tau_M = T(\pi_M)$ together with naturality of $\kappa$,
\[
  \pi_{TM} \circ T_{(2)}\kappa_M = \kappa_M \circ (\id_{\R} \times \pi_{TM}),
  \qquad
  K \circ T_{(2)}\kappa_M = \kappa_M \circ (\id_{\R} \times K)
  \quad\text{(axiom (f)).}
\]
In particular both $\kappa_{TM}$ and $T_{(2)}\kappa_M$ scale the third
($K$-)factor, reflecting that the second-order component of $T^2M$ is scaled by
each of the two structures.
\end{rem}

\begin{defn}[{\cite[Definition~6.6, Theorem~8.4]{LW}}]\label{def:effective-vertical-connection}
An affine vertical connection $K \colon T^2M \to TM$ on $M$ is called
\textbf{effective} if the morphism
\begin{align*}
    \Theta \colon T^2M \longrightarrow TM \times_M TM \times_M TM,
    \qquad W \longmapsto \bigl(\pi_{TM}(W),\; T(\pi_M)(W),\; K(W)\bigr)
\end{align*}
is a diffeomorphism of elastic diffeological spaces.
\end{defn}

\begin{prop}[{\cite[Theorem~7.3]{LW}}]\label{prop:effective-iff-full}
Given an effective vertical connection $K \colon T^2M \to TM$ on $M$,
there exists a unique horizontal connection
$H \colon TM \times_M TM \to T^2M$ compatible with $K$
(i.e., such that $(K, H)$ is a full connection).
Consequently, specifying a full connection is equivalent to choosing an effective vertical connection.
\end{prop}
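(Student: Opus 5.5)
The plan is to construct $H$ explicitly from the inverse of $\Theta$, check the axioms of Definition~\ref{def:horizontal-connection} and the two full-connection conditions, and then prove uniqueness by using that $\Theta$ is a monomorphism. Since $K$ is effective, $\Theta^{-1}\colon TM\times_M TM\times_M TM\to T^2M$ exists as a morphism of $\mathsf{Elast}$. Following \cite[Theorem~7.3]{LW}, set
\[
  H \coloneqq \Theta^{-1}\circ\langle \pr_2,\ \pr_1,\ 0_M\circ\pi_M\circ\pr_1\rangle ,
\]
so that $H(v_0,v_1)$ is the unique element $W\in T^2M$ with $\pi_{TM}W=v_1$, $T(\pi_M)W=v_0$ and $KW=0$. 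Axioms (a) and (b) then hold by construction, and so does condition (a) of Definition~\ref{def:full-connection}.

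For condition (b), I would apply $\Theta$ to both sides and use that $\Theta$ is monic. The key identities are the following.
\begin{itemize}
  \item $\mu(a,b)$ is the vertical element with $\pi_{TM}=b$, $T(\pi_M)=0$ and $K=a$. This uses axiom (a) of $K$ together with the additivity of $K$ from Remark~\ref{rem:K-linear-bundle}.
  \item The addition in $T^2M$ over $\pi_{TM}$ corresponds under $\Theta$ to componentwise addition in the second and third factors.
\end{itemize}
With these, $\Theta(\mu\langle K,\pi_{TM}\rangle W + HUW) = (\pi_{TM}W,\ T\pi_M W,\ KW)$, which gives (b).

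The scalar axioms (e) and (f) should follow from Remark~\ref{rem:theta-bi-scalar}. That remark says $\Theta$ intertwines $\kappa_{TM}$ with $\kappa^{(2,3)}$ and $T_{(2)}\kappa_M$ with $\kappa^{(1,3)}$. Since the $K$-component of $H$ is zero, scaling it is harmless, and the defining tuple of $H$ is equivariant in the required way.

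Uniqueness is immediate: if $(K,H')$ is also full, then condition (a) together with axioms (a) and (b) of $H'$ force $\Theta\circ H'=\Theta\circ H$, hence $H'=H$. The converse statement, that a full connection has effective $K$, is Proposition~\ref{prop:full-connection-fibre-product}; it yields the stated equivalence.

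I expect the main obstacle to be axioms (c) and (d) of the horizontal connection, which involve $T(H)$, $\lambda_{TM}$ and $\tau_{TM}$. My approach is to apply $T(\Theta)$, which is an isomorphism because $T$ preserves isomorphisms. Since $T$ preserves the relevant fibre products on elastic spaces, by (E1)/(E5) and product preservation, $T(TM\times_M TM\times_M TM)\cong T^2M\times_{TM}T^2M\times_{TM}T^2M$. The identities then reduce componentwise to axioms (c) and (d) of $K$, the naturality of $\lambda$ and $\tau$, and the tangent-category identities $T(\pi)\circ\lambda=0\circ\pi$ and $\pi\circ\tau=T\pi$. This is exactly the computation of \cite[Theorem~7.3]{LW} in an arbitrary tangent category, so I would invoke it verbatim via Theorem~\ref{thm:tangent-structure-2} and add only the scalar checks above.
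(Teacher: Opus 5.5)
Your proposal is correct and follows the paper's route. The paper also cites \cite[Theorem~7.3]{LW}, takes $H=\Theta^{-1}\circ s$, defers axioms (c), (d) and the full-connection conditions to that reference, and derives the scalar axioms (e), (f) from Remark~\ref{rem:theta-bi-scalar} exactly as you do. Your ordering $\langle \pr_2,\pr_1,0_M\circ\pi_M\circ\pr_1\rangle$ is in fact the one matching the convention $T(\pi_M)\circ H=\pr_1$, $\pi_{TM}\circ H=\pr_2$ of Definition~\ref{def:horizontal-connection}, whereas the paper's displayed $s(v_1,v_2)=(v_1,v_2,0_M\circ\pi_M(v_1))$ has its first two entries swapped relative to $\Theta=\langle\pi_{TM},T(\pi_M),K\rangle$.
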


\begin{rem}\label{rem:H-inherits-scalar}
The construction in \cite[Theorem~7.3]{LW} is carried out for differential bundles,
whose structure involves no scalar multiplication, so the morphism $H$ produced by
Proposition~\ref{prop:effective-iff-full} need not a priori respect the scalar
multiplications. However, the horizontal connection $H$ is precisely
\[
  H = \Theta^{-1} \circ s, \qquad
  s(v_1, v_2) = \bigl(v_1,\; v_2,\; 0_M \circ \pi_M(v_1)\bigr),
\]
that is, the inverse image under $\Theta$ of the inclusion of $TM \times_M TM$ as
the first two factors. Hence, if $K$ additionally satisfies axioms (e) and (f),
then by Remark~\ref{rem:theta-bi-scalar} scaling the $\pi_{TM}$-input of $H$
corresponds to $T_{(2)}\kappa_M$ and scaling the $T(\pi_M)$-input corresponds to
$\kappa_{TM}$; consequently $H$ satisfies the scalar axioms (e) and (f) of
Definition~\ref{def:horizontal-connection}, with no further hypotheses.%
\footnote{We adopt the convention $\pi_{TM} \circ H = \pr_2$ and
$T(\pi_M) \circ H = \pr_1$ in Definition~\ref{def:horizontal-connection}
(consistent with the canonical horizontal connection on $\R^n$,
Example~\ref{ex:euclidean-horizontal-connection}); with this convention
$\kappa^{(2)}$ is paired with $T_{(2)}\kappa_M$ in axiom (f) and $\kappa^{(1)}$
with $\kappa_{TM}$ in axiom (e).}
Thus the equivalence of Proposition~\ref{prop:effective-iff-full} holds at the
level of structures preserving scalar multiplication: an effective vertical
connection satisfying (e), (f) corresponds to a full connection whose horizontal
part satisfies (e), (f).
\end{rem}

\subsection{Covariant Derivatives along Maps}\label{subsec:cov-along}

\begin{defn}\label{def:vf-along-map}
Let $f \colon M \to N$ be a smooth map of elastic diffeological spaces.
A smooth map $V \colon M \to TN$ is called a
\textbf{vector field along $f$} if $\pi_N \circ V = f$.
\end{defn}

\begin{defn}\label{def:covariant-derivative-along}
Let $K$ be a vertical connection on $N$, and let $V$ be a vector field
along $f \colon M \to N$.
The \textbf{covariant derivative} of $V$ in the direction of a vector field
$X \colon M \to TM$ on $M$ is defined by
\begin{align*}
    \nabla^K_X V := K \circ T(V) \circ X.
\end{align*}
When $\nabla^K_X V = 0_N \circ f$, we say that $V$ is \textbf{parallel}
in the direction of $X$.
\end{defn}

\section{Riemannian Metrics and the Levi-Civita Connection}\label{sec:riemannian-metric}

Throughout this section, $M$ denotes an elastic diffeological space unless
stated otherwise. Tangent vectors $v, w \in TM$ are said to lie \emph{in a
common fibre} if $\pi_M(v) = \pi_M(w)$; each fibre of $\pi_M$ is an
$\R$-module with respect to the operations $+_M$ and $\kappa_M$ of
Theorem~\ref{thm:tangent-structure-2}.

\subsection{Riemannian Metrics}\label{subsec:metric}

\begin{defn}\label{def:metric}
A smooth map $g \colon TM \times_M TM \to \R$ is called a
\textbf{Riemannian metric} on $M$ if it satisfies the following conditions
for all $v, w, z \in TM$ lying in a common fibre and all
$r_1, r_2 \in \R$:
 \begin{description}
     \item[Bilinearity]
   $g(r_1v + r_2w,\, z) = r_1g(v, z) + r_2g(w, z)$,\quad
   $g(v,\, r_1w + r_2z) = r_1g(v, w) + r_2g(v, z)$.
 \item[Symmetry]
   $g(v, w) = g(w, v)$ for all $(v, w) \in TM \times_M TM$.
 \item[Positivity]
   $g(v, v) \geq 0$ for all $v \in TM$.
 \item[Definiteness]
   $g(v, v) = 0$ if and only if $v = 0_M(\pi_M(v))$.
 \end{description}
\end{defn}

In \cite{KSS}, weak Riemannian metrics were defined for arbitrary
diffeological spaces as follows.

\begin{defn}[{\cite[Definition~3.1]{KSS}}]\label{def:kss-weak-metric}
  Let $M$ be a diffeological space, not necessarily elastic. A map
  $g \colon \mathbb{L}\hat{T}_2(M) \to \R$ is a
  \textbf{weak Riemannian metric} in the sense of \cite{KSS} if,
  for every $P \in \D(M)$, the map
  \begin{align*}
    g(P) := g \circ j_P \colon TU_P \times_{U_P} TU_P \to \R
  \end{align*}
  is a positive symmetric covariant $2$-tensor field on $U_P$.
  Here $j_P \colon \hat{T}_2U_P = TU_P \times_{U_P} TU_P \to \mathbb{L}\hat{T}_2(M)$
  denotes the canonical map into the colimit
  $\mathbb{L}\hat{T}_2(M) = \colim_{P \in \D(M)} \hat{T}_2U_P$,
  written $\pi_{\hat{T}_2(U_P)}$ in \cite{KSS}.
\end{defn}

\begin{defn}[{\cite[Definition~3.7]{KSS}}]\label{def:kss-definite}
  Let $M$ be a diffeological space and let $g$ be a weak Riemannian metric
  on $M$. If there exists a generating family $\mathcal{G}$ of $\D(M)$ such
  that $g(P)$ is definite in the classical sense for every
  $P \in \mathcal{G}$, then $g$ is said to be \textbf{definite} in the
  sense of \cite{KSS}.
\end{defn}

\begin{rem}\label{rem:kss-equivalence}
  Since $M$ is elastic, the isomorphism
  $\theta_{2,M} \colon \mathbb{L}\hat{T}_2(M) \xrightarrow{\ \simeq\ } TM \times_M TM$
  of axiom~(E1) identifies the two sides; under this identification the
  canonical map $j_P$ corresponds to
  $T(P) \times_{U_P} T(P)$ (Remark~\ref{rem:cocone-TP}).
  Consequently, on an elastic diffeological space the axioms of a weak
  metric in the sense of \cite{KSS} coincide with bilinearity, symmetry,
  and positivity in the sense of Definition~\ref{def:metric}.
\end{rem}

\begin{notation}\label{not:g-along-plot}
  For a plot $P \in \D(M)$ we write
  \[
    g(P) := g \circ \bigl(T(P) \times_P T(P)\bigr),
  \]
  a positive symmetric covariant $2$-tensor field on $U_P$, and
  $g(P)(u)(v, w) := g([P, u, v], [P, u, w])$ for its value at $u \in U_P$.
  This notation recurs in the energy and length functionals of
  Section~\ref{sec:geodesics}.
\end{notation}

\begin{prop}\label{prop:kss-definite-implies-definite}
  Let $g$ be a weak Riemannian metric on $M$ which is definite in the
  sense of \cite{KSS}. Then $g$, regarded as a map
  $TM \times_M TM \to \R$ via Remark~\ref{rem:kss-equivalence},
  satisfies the definiteness condition of Definition~\ref{def:metric}.
\end{prop}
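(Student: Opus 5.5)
The "if" direction of definiteness is immediate from bilinearity: $g(0,0)=0$. For the converse, let $v\in TM$ with $g(v,v)=0$; we must show $v=0_M(\pi_M(v))$. The plan is to pull $v$ back to a plot in the generating family, apply classical definiteness there, and push the resulting zero vector forward using linearity of the cocone maps.

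First, choose a representative. By Remark~\ref{rem:tangent-structure-2}, $v=[P,u,w]$ for some plot $P$, $u\in U_P$ and $w\in\R^{\dim U_P}$. Since $\mathcal{G}$ generates $\D(M)$, near $u$ the plot $P$ factors through an element of $\mathcal{G}$. Concretely, there are an open neighbourhood $V\ni u$, a plot $Q\in\mathcal{G}$ and a smooth map $h\colon V\to U_Q$ with $P|_V=Q\circ h$. Two elementary moves of the colimit relation then give
\[
  v=[P,u,w]=[P|_V,u,w]=[Q\circ h,u,w]=[Q,h(u),D_uh(w)].
\]
The first equality is the restriction move along the inclusion $V\hookrightarrow U_P$.

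Next, compute the metric on this representative. By Notation~\ref{not:g-along-plot} and Remark~\ref{rem:kss-equivalence},
\[
  0=g(v,v)=g(Q)(h(u))\bigl(D_uh(w),D_uh(w)\bigr).
\]
Since $Q\in\mathcal{G}$, the tensor $g(Q)$ is classically definite, so $D_uh(w)=0$. Therefore $v=[Q,h(u),0]$. By Definition~\ref{def:nat-trans} this is $0_M(Q(h(u)))$, and $Q(h(u))=P(u)=\pi_M(v)$, which is the claim.

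The only step needing care is the factorization through a generating-family plot. The generating-family definition gives, locally around each point of $U_P$, either a factorization through a member of $\mathcal{G}$ or a constant plot. In the constant case, $v=[P|_V,u,w]$ with $P|_V$ constant. It factors through the $0$-dimensional plot $\mathrm{const}\colon\R^0\to M$, so $v=[\mathrm{const},0,0]=0_M(P(u))$ directly, without using $g$. I expect handling this bookkeeping of the generating-family definition (restriction and the constant-plot case) to be the main, though mild, obstacle. The rest is the one-line computation above.
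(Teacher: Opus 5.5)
Your proposal is correct and matches the paper's proof step for step. Like the paper, you localize $P$ near $u$ via the generating-family characterization, dispose of the constant case as $0_M(P(u))$, and otherwise pass to the representative $[Q,h(u),D_uh(w)]$ with $Q\in\mathcal{G}$, where classical definiteness of $g(Q)$ forces $D_uh(w)=0$. Your additions — the trivial converse direction via bilinearity and the explicit factorization of the constant plot through $\R^0$ — only make explicit details the paper leaves implicit.
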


\begin{proof}
  Let $[P, u, v] \in TM$ with $g([P, u, v], [P, u, v]) = 0$.
  Since the generating family $\mathcal{G}$ generates $\D(M)$, there exists
  an open neighbourhood $W \subseteq U_P$ of $u$ such that either $P|_W$ is
  constant, or $P|_W = Q \circ f$ for some generator
  $Q \in \mathcal{G}$ and some smooth map $f \colon W \to U_Q$.
  If $P|_W$ is constant, then $[P, u, v] = 0_M(P(u))$ and there is nothing
  to prove. In the latter case, functoriality
  $T(P|_W) = T(Q) \circ \hat{T}(f)$ (Remark~\ref{rem:cocone-TP}) gives
  \begin{align*}
    [P, u, v] = [P|_W, u, v] = [Q, f(u), df_u(v)].
  \end{align*}
  Hence, in terms of the pullback tensors of
  Definition~\ref{def:kss-weak-metric},
  \begin{align*}
    0 = g([P, u, v], [P, u, v])
      = g(Q)(f(u))\bigl(df_u(v),\, df_u(v)\bigr).
  \end{align*}
  Since $Q \in \mathcal{G}$, the tensor $g(Q)$ is definite, so
  $df_u(v) = 0$. Therefore
  $[P, u, v] = [Q, f(u), 0] = 0_M(P(u))$,
  which is the definiteness condition of Definition~\ref{def:metric}.
\end{proof}

Recall from Lemma~\ref{lem:cotangent-omega} the identification
$\Clin(TM, \R) \cong \Omega^1(M)$. A Riemannian metric induces a
map from vector fields to $1$-forms as follows: for $V \in \mathfrak{X}(M)$,
the composite
\begin{align*}
  g \circ \langle V \circ \pi_M,\, \id_{TM} \rangle \colon TM \to \R,
  \qquad u \longmapsto g\bigl(V(\pi_M(u)),\, u\bigr),
\end{align*}
is smooth and fibrewise linear by bilinearity of $g$, hence an element of
$\Clin(TM,\R) \cong \Omega^1(M)$.

As in classical Riemannian geometry, a metric induces a correspondence between vector fields and differential forms.

\begin{defn}\label{def:flat}
The \textbf{flat morphism} associated with $g$ is the map
\begin{align*}
  \flat \colon \mathfrak{X}(M) \to \Omega^1(M),
  \qquad
  \flat(V) := g \circ \langle V \circ \pi_M,\, \id_{TM} \rangle.
\end{align*}
\end{defn}

\begin{defn}\label{def:non-degenerate}
Let $g \colon TM \times_M TM \to \R$ be a positive, symmetric, and bilinear map.
\begin{enumerate}
    \item $g$ is \textbf{weakly non-degenerate} if for every
      $v \in TM$, the condition $g(v, u) = 0$ for all $u \in TM$ with
      $\pi_M(u) = \pi_M(v)$ implies $v = 0_M(\pi_M(v))$.
    \item $g$ is \textbf{strongly non-degenerate} if the flat morphism
      $\flat \colon \mathfrak{X}(M) \to \Omega^1(M)$ is a diffeomorphism
      with respect to the functional diffeologies. In this case, its
      inverse is written $\sharp \colon \Omega^1(M) \to \mathfrak{X}(M)$.
      (The notation $\flat$, $\sharp$ follows the classical convention.)
\end{enumerate}
\end{defn}

\begin{rem}\label{rem:weak-non-degenerate-linear}
  Weak non-degeneracy states that $g$ separates the points of each fibre:
  by bilinearity, it is equivalent to the condition that for
  $v, w \in TM$ lying in a common fibre, if $g(v, u) = g(w, u)$ for all
  $u \in TM$ with $\pi_M(u) = \pi_M(v)$, then $v = w$ (apply the
  definition to $v - w$). In particular, if $g$ is weakly non-degenerate,
  then $\flat$ is injective.
\end{rem}

\begin{prop}\label{prop:definite-implies-weak-nondegenerate}
  If $g \colon TM \times_M TM \to \R$ satisfies the definiteness condition,
  then $g$ is weakly non-degenerate.
\end{prop}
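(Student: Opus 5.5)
The argument is immediate: we test the hypothesis of weak non-degeneracy against the vector itself and then apply definiteness. Fix $v \in TM$ and suppose that $g(v,u) = 0$ for every $u \in TM$ with $\pi_M(u) = \pi_M(v)$. The choice $u = v$ is admissible, since trivially $\pi_M(v) = \pi_M(v)$, so $(v,v) \in TM \times_M TM$ and the hypothesis gives $g(v,v) = 0$. The definiteness condition of Definition~\ref{def:metric} then yields $v = 0_M(\pi_M(v))$, which is exactly the conclusion required in Definition~\ref{def:non-degenerate}(1).

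We should also check that the positivity, symmetry and bilinearity hypotheses implicit in Definition~\ref{def:non-degenerate} play no role. They do not: only the implication ``$g(v,v)=0 \Rightarrow v = 0_M(\pi_M(v))$'' is used, and the direction of definiteness needed is precisely that one.

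There is no real obstacle. The only point needing care is that the diagonal pair $(v,v)$ lies in the fibre product, so that $g(v,v)$ is defined. If desired, we can add the remark that, by Remark~\ref{rem:weak-non-degenerate-linear}, $\flat$ is then injective. This corollary is not part of the statement, however.
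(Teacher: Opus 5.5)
Your proposal is correct and is essentially the paper's proof: take $u = v$ in the hypothesis to get $g(v,v) = 0$, then apply definiteness to conclude $v = 0_M(\pi_M(v))$. Your side remarks are accurate too: only that one direction of definiteness is used, and $(v,v)$ trivially lies in the fibre product.
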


\begin{proof}
  Let $v \in TM$ satisfy $g(v, u) = 0$ for all $u \in TM$ with
  $\pi_M(u) = \pi_M(v)$.
  Taking $u = v$ gives $g(v, v) = 0$, and definiteness then implies
  $v = 0_M(\pi_M(v))$.
\end{proof}

\subsection{The Vertical Levi-Civita Connection}\label{subsec:levi-civita}

Throughout this subsection, $(M, g)$ denotes an \textbf{elastic Riemannian
diffeological space}, that is, an elastic diffeological space $M$ equipped
with a Riemannian metric $g$ in the sense of Definition~\ref{def:metric}.

Before defining the Levi-Civita connection, we fix notation for the
morphisms appearing in the metric compatibility condition. Since the
pullbacks $T_k$ are preserved pointwise by the tangent functor
(Section~\ref{sec:RosickysAxioms}), we identify
\begin{align*}
  T(TM \times_M TM) \;\cong\; T^2M \times_{T(\pi_M)} T^2M,
\end{align*}
and on this space we define
\begin{align*}
  K \times_{\pi} \pi_{TM} \colon (A, B) \longmapsto (K(A),\, \pi_{TM}(B)),
  \qquad
  \pi_{TM} \times_{\pi} K \colon (A, B) \longmapsto (\pi_{TM}(A),\, K(B)),
\end{align*}
both with values in $TM \times_M TM$; the base points match by axiom~(b)
of Definition~\ref{def:vertical-connection} and naturality of $\pi$.
Finally, for the smooth function $g$ on the diffeological space
$TM \times_M TM$ we write
\begin{align*}
  d(g) := \pr_2 \circ T(g) \colon T(TM \times_M TM) \to \R,
\end{align*}
which restricts along every plot $R$ of $TM \times_M TM$ to the classical
exterior derivative $d(g \circ R)$, by the computation in the proof of
Proposition~\ref{prop:pr2-Tf}.

\begin{defn}\label{def:levi-civita}
A vertical connection $K \colon T^2M \to TM$ on $M$ is called a
\textbf{Levi-Civita connection} for $g$ if it satisfies the following two
conditions:
\begin{enumerate}
    \item $K$ is torsion-free, i.e., $K \circ \tau_M = K$.
    \item \textbf{Metric compatibility}:
    \begin{equation}\label{eq:metric-compatibility}
        d(g) = +_{\R} \circ \langle g \circ (K \times_{\pi} \pi_{TM}),\;
               g \circ (\pi_{TM} \times_{\pi} K) \rangle.
    \end{equation}
\end{enumerate}
\end{defn}

\begin{prop}\label{prop:metric-compatibility-vf}
  Let $K$ be a Levi-Civita connection for $g$.
  For any vector fields $X, Y, Z \in \mathfrak{X}(M)$,
  \begin{align*}
    Xg(Y, Z) = g(\nabla^K_X Y, Z) + g(Y, \nabla^K_X Z).
  \end{align*}
\end{prop}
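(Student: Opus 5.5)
We precompose the metric compatibility identity \eqref{eq:metric-compatibility} with a suitable map $M \to T(TM\times_M TM)$ and read off both sides. The natural choice is $T(\langle Y,Z\rangle)\circ X$. Under the identification $T(TM\times_M TM)\cong T^2M\times_{T(\pi_M)}T^2M$, which comes from $T$ preserving the pullback $T_2$, this map corresponds to the pair $(T(Y)\circ X,\, T(Z)\circ X)$. The base points match because $T(\pi_M)\circ T(Y)\circ X = T(\pi_M\circ Y)\circ X = X$, and likewise for $Z$.

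For the left-hand side, we compute $d(g)\circ T(\langle Y,Z\rangle)\circ X = \pr_2\circ T(g)\circ T(\langle Y,Z\rangle)\circ X = \pr_2\circ T(g\circ\langle Y,Z\rangle)\circ X$ by functoriality of $T$. Writing $g(Y,Z)\coloneqq g\circ\langle Y,Z\rangle\colon M\to\R$, Definition~\ref{def:vf-action} identifies this with $X\cdot g(Y,Z)$.

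For the right-hand side, the map $K\times_\pi\pi_{TM}$ sends the pair to $(K\circ T(Y)\circ X,\ \pi_{TM}\circ T(Z)\circ X)$. By Definition~\ref{def:covariant-derivative} the first entry is $\nabla^K_XY$. By naturality of $\pi$, the second entry is $Z\circ\pi_M\circ X = Z$. Hence $g\circ(K\times_\pi\pi_{TM})$ evaluates to $g(\nabla^K_XY,Z)$. Symmetrically, $g\circ(\pi_{TM}\times_\pi K)$ evaluates to $g(Y,\nabla^K_XZ)$. Adding the two with $+_\R$ gives the stated identity pointwise on $M$.

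The main obstacle is the first step: checking that, under the identification $T(TM\times_M TM)\cong T^2M\times_{T(\pi_M)}T^2M$, the map $T(\langle Y,Z\rangle)$ really corresponds to $\langle T(Y),T(Z)\rangle$. The intent is that this identification is the canonical comparison $\langle T(\pr_1),T(\pr_2)\rangle$, which is invertible because $T$ preserves the pullbacks $T_k$ (a tangent-category axiom, Section~\ref{sec:RosickysAxioms}). Granting this, $T(\pr_i)\circ T(\langle Y,Z\rangle)=T(\pr_i\circ\langle Y,Z\rangle)$ gives $T(Y)$ and $T(Z)$ respectively, so the correspondence is immediate. After that, only naturality of $\pi_{TM}$ is needed, and torsion-freeness plays no role.
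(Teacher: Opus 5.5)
Your proof is correct and follows the paper's argument essentially verbatim: precompose \eqref{eq:metric-compatibility} with $T(\langle Y,Z\rangle)\circ X = (T(Y)\circ X,\, T(Z)\circ X)$, read off $X\cdot g(Y,Z)$ on the left, and use naturality of $\pi$ on the right. The only cosmetic difference is that you identify the left-hand side with $X\cdot g(Y,Z)$ directly from Definition~\ref{def:vf-action}, whereas the paper passes through $d(g(Y,Z))$ via Proposition~\ref{prop:pr2-Tf}.
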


\begin{proof}
  We compose both sides of \eqref{eq:metric-compatibility} with
  $(T(Y)\circ X,\; T(Z) \circ X) = T(\langle Y, Z\rangle) \circ X$,
  where the equality holds under the identification
  $T(TM \times_M TM) \cong T^2M \times_{T(\pi_M)} T^2M$ above.
  For the left-hand side, functoriality of $T$,
  Proposition~\ref{prop:pr2-Tf} applied to the smooth function
  $g(Y, Z) := g \circ \langle Y, Z \rangle \colon M \to \R$, and
  Definition~\ref{def:vf-action} give
  \begin{align*}
    d(g)\circ T(\langle Y, Z\rangle) \circ X
      = \pr_2 \circ T\bigl(g(Y, Z)\bigr) \circ X
      = d\bigl(g(Y, Z)\bigr) \circ X
      = Xg(Y, Z).
  \end{align*}
  For the right-hand side,
  \begin{align*}
    +_{\R} \circ \langle g \circ (K \times_{\pi} \pi_{TM}),\;
    g \circ (\pi_{TM} \times_{\pi} K) \rangle \circ (T(Y)\circ X, T(Z) \circ X)
      &= g(K \circ T(Y)\circ X,\; \pi_{TM} \circ T(Z) \circ X) \\
      &\quad + g(\pi_{TM} \circ T(Y) \circ X,\; K \circ T(Z)\circ X).
  \end{align*}
  Since $\pi$ is a natural transformation,
  $\pi_{TM} \circ T(Z) \circ X = Z \circ \pi_M \circ X = Z$,
  and similarly $\pi_{TM} \circ T(Y) \circ X = Y$.
  Therefore the right-hand side equals
  $g(\nabla^K_X Y, Z) + g(Y, \nabla^K_X Z)$, which proves the claim.
\end{proof}

\begin{prop}\label{prop:vert-hori-compatibility}
Let $(K, H)$ be a full connection on $M$ whose vertical part $K$ satisfies
the metric compatibility condition \eqref{eq:metric-compatibility}. Then
\begin{align*}
    d(g) \circ (H \times H) = 0,
\end{align*}
where $H \times H$ is defined on the fibre product
$(TM \times_M TM) \times_{\pr_1} (TM \times_M TM)$.
\end{prop}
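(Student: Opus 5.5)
The plan is to evaluate the metric compatibility identity \eqref{eq:metric-compatibility} on the image of $H \times H$ and show that each summand on the right vanishes because of axiom (a) of a full connection, $K \circ H = 0_M \circ \pi_M \circ \pr_1$.

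First we make sense of $H \times H$ as a map into $T(TM \times_M TM) \cong T^2M \times_{T(\pi_M)} T^2M$. An element of $(TM \times_M TM) \times_{\pr_1} (TM \times_M TM)$ is a pair $((v_0,w),(v_0,w'))$ with common first entry. By axiom (a) of Definition~\ref{def:horizontal-connection}, $T(\pi_M)\circ H = \pr_1$, so $H(v_0,w)$ and $H(v_0,w')$ have the same image $v_0$ under $T(\pi_M)$. Hence $(H(v_0,w),H(v_0,w'))$ lies in $T^2M \times_{T(\pi_M)} T^2M$, which is the identification of $T(TM\times_M TM)$ fixed before Definition~\ref{def:levi-civita}. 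Thus $d(g)\circ(H\times H)$ is defined.

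Next we compose \eqref{eq:metric-compatibility} with $H\times H$. The right-hand side becomes
\begin{align*}
 &g\bigl(K(H(v_0,w)),\, \pi_{TM}(H(v_0,w'))\bigr) + g\bigl(\pi_{TM}(H(v_0,w)),\, K(H(v_0,w'))\bigr)\\
 &\quad= g\bigl(0_M(x),\, w'\bigr) + g\bigl(w,\, 0_M(x)\bigr),
\end{align*}
with $x=\pi_M(v_0)$. The first-line expressions simplify using full-connection axiom (a), $K\circ H = 0_M\circ\pi_M\circ\pr_1$, and horizontal axiom (b), $\pi_{TM}\circ H=\pr_2$. Bilinearity of $g$ (Definition~\ref{def:metric}) gives $g(0,z)=0=g(z,0)$ in each fibre, so the right-hand side vanishes.

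The main obstacle is making these steps precise as equalities of morphisms rather than pointwise manipulations. The key point is that the two component maps $K\times_\pi\pi_{TM}$ and $\pi_{TM}\times_\pi K$ commute with $H\times H$ componentwise, which holds simply because each acts factorwise on the fibre product. It must also be checked that $0_M(x)$ lies in the same fibre as $w'$ and $w$. This follows from $\pi_M\circ\pr_2=\pi_M\circ\pr_1$ on $TM\times_M TM$, so no subtlety from the elastic colimit description enters. Since all identities are equalities of maps out of a diffeological space, the pointwise verification suffices.
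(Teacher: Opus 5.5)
Your proposal is correct and follows the paper's proof essentially verbatim. You use horizontal axiom (a) to see that $H\times H$ lands in $T^2M\times_{T(\pi_M)}T^2M\cong T(TM\times_M TM)$. You then use full-connection axiom (a), $K\circ H=0_M\circ\pi_M\circ\pr_1$, together with $\pi_{TM}\circ H=\pr_2$, to reduce each summand of \eqref{eq:metric-compatibility} to $g$ paired with a zero vector in the same fibre, which vanishes by bilinearity.
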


\begin{proof}
We compose both sides of \eqref{eq:metric-compatibility} on the right with $H \times H$; note that axiom~(a) of horizontal connections, $T(\pi_M) \circ H = \pr_1$, guarantees that $H \times H$ takes values in $T^2M \times_{T(\pi_M)} T^2M \cong T(TM \times_M TM)$.
By axiom~(a) of full connections,
\[
K \circ H = 0_M \circ \pi_M \circ \pr_1.
\]
Using further the property $\pi_{TM} \circ H = \pr_2$ of horizontal connections, each component of the right-hand side satisfies
\[
g \circ (K \times_\pi \pi_{TM}) \circ (H \times H) = 0,
\qquad
g \circ (\pi_{TM} \times_\pi K) \circ (H \times H) = 0,
\]
since $g(0_M(x), u) = 0$ for any $u$ in the fibre over $x$, by
bilinearity. Hence
\[
d(g) \circ (H \times H)
= +_{\R} \circ \langle 0,\; 0 \rangle
= 0. \qedhere
\]
\end{proof}

Using the vector field formulation of metric compatibility, a computation
analogous to that in ordinary Riemannian geometry yields the following
Koszul formula.

\begin{prop}[Koszul formula]\label{prop:koszul-formula}
  Let $K$ be a Levi-Civita connection for $g$.
  For any vector fields $X, Y, Z \in \mathfrak{X}(M)$,
  \begin{equation}\label{eq:koszul}
  \begin{aligned}
    2g(\nabla^K_X Y,\; Z)
      &= X\bigl(g(Y,Z)\bigr) + Y\bigl(g(X,Z)\bigr) - Z\bigl(g(X,Y)\bigr) \\
      &\quad + g([X,Y],Z) - g([X,Z],Y) - g([Y,Z],X).
  \end{aligned}
  \end{equation}
\end{prop}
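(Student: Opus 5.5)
The plan is to reproduce the classical Koszul computation, using only the vector-field identities already established, so that no further tangent-categorical manipulation is needed. The two inputs are metric compatibility in vector-field form (Proposition~\ref{prop:metric-compatibility-vf}) and torsion-freeness in the form $\nabla^K_X Y - \nabla^K_Y X = [X,Y]$ (Proposition~\ref{prop:torsion-free-bracket}). Symmetry and bilinearity of $g$ on each fibre (Definition~\ref{def:metric}) pass to identities of smooth functions on $M$ such as $g(V,W)=g(W,V)$ and $g(V+V',W)=g(V,W)+g(V',W)$. This is because the sum and scalar multiplication of vector fields are defined pointwise through $+_M$ and $\kappa_M$ (Definition~\ref{def:vf-module}).

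First, I apply Proposition~\ref{prop:metric-compatibility-vf} three times, to the triples $(X,Y,Z)$, $(Y,X,Z)$ and $(Z,X,Y)$:
\begin{align*}
  X g(Y,Z) &= g(\nabla^K_X Y, Z) + g(Y, \nabla^K_X Z),\\
  Y g(X,Z) &= g(\nabla^K_Y X, Z) + g(X, \nabla^K_Y Z),\\
  Z g(X,Y) &= g(\nabla^K_Z X, Y) + g(X, \nabla^K_Z Y).
\end{align*}
I then form the first plus the second minus the third, an identity in $C^\infty(M,\R)$. Using symmetry of $g$, I group the six terms into three pairs:
\[
  g(\nabla^K_X Y + \nabla^K_Y X, Z) + g(\nabla^K_X Z - \nabla^K_Z X, Y) + g(\nabla^K_Y Z - \nabla^K_Z Y, X).
\]
Torsion-freeness turns the last two pairs into $g([X,Z],Y)$ and $g([Y,Z],X)$. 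For the first pair I write $\nabla^K_Y X = \nabla^K_X Y - [X,Y]$, which gives $2g(\nabla^K_X Y,Z) - g([X,Y],Z)$. Rearranging then yields \eqref{eq:koszul}.

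I expect no step to be a genuine obstacle. The only point needing care is that subtraction of vector fields, and linearity of $g$ with respect to it, is legitimate. This follows from Remark~\ref{rem:negatives}: $-V = \kappa_M(-1,V)$ is again a smooth section, and bilinearity of $g$ gives $g(V-W,Z) = g(V,Z) - g(W,Z)$ pointwise, hence as functions on $M$. I would also check that the bracket convention in Proposition~\ref{prop:torsion-free-bracket} matches the signs used above, since a reversed convention would flip the signs of the bracket terms.
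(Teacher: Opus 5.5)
Your proposal is correct and is essentially the paper's proof: you apply Proposition~\ref{prop:metric-compatibility-vf} three times, add the first two identities and subtract the third, regroup by bilinearity of $g$ (and symmetry, which you rightly make explicit), then substitute the torsion-free identity of Proposition~\ref{prop:torsion-free-bracket} and rearrange. Your worry about the bracket convention is also settled, since Proposition~\ref{prop:torsion-free-bracket} is stated exactly as $\nabla^K_X Y - \nabla^K_Y X = [X,Y]$, which is the form you use.
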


\begin{proof}
  Applying Proposition~\ref{prop:metric-compatibility-vf} three times,
  \begin{align*}
    X\bigl(g(Y,Z)\bigr) &= g(\nabla^K_X Y, Z) + g(Y, \nabla^K_X Z), \\
    Y\bigl(g(X,Z)\bigr) &= g(\nabla^K_Y X, Z) + g(X, \nabla^K_Y Z), \\
    Z\bigl(g(X,Y)\bigr) &= g(\nabla^K_Z X, Y) + g(X, \nabla^K_Z Y).
  \end{align*}
  Adding the first two identities, subtracting the third, and collecting
  terms by bilinearity of $g$, we obtain
  \begin{align*}
    &X\bigl(g(Y,Z)\bigr) + Y\bigl(g(X,Z)\bigr) - Z\bigl(g(X,Y)\bigr) \\
      &\qquad = g(\nabla^K_X Y + \nabla^K_Y X,\; Z)
        + g(\nabla^K_X Z - \nabla^K_Z X,\; Y)
        + g(\nabla^K_Y Z - \nabla^K_Z Y,\; X).
  \end{align*}
  Since $K$ is torsion-free, Proposition~\ref{prop:torsion-free-bracket}
  gives $\nabla^K_X Z - \nabla^K_Z X = [X, Z]$,
  $\nabla^K_Y Z - \nabla^K_Z Y = [Y, Z]$, and
  $\nabla^K_X Y + \nabla^K_Y X = 2\nabla^K_X Y - [X, Y]$. Substituting,
  \begin{align*}
    X\bigl(g(Y,Z)\bigr) + Y\bigl(g(X,Z)\bigr) - Z\bigl(g(X,Y)\bigr)
      = 2g(\nabla^K_X Y, Z) - g([X,Y], Z) + g([X,Z], Y) + g([Y,Z], X),
  \end{align*}
  which rearranges to \eqref{eq:koszul}.
\end{proof}

\begin{thm}\label{thm:uniqueness-cov-deriv}
  If $K_1$ and $K_2$ are Levi-Civita connections for $g$, then
  $\nabla^{K_1} = \nabla^{K_2}$.
\end{thm}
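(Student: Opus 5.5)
The plan is to use the Koszul formula (Proposition~\ref{prop:koszul-formula}) together with the weak non-degeneracy of $g$. Fix $X, Y \in \mathfrak{X}(M)$. Applying \eqref{eq:koszul} to $K_1$ and to $K_2$, the right-hand side involves only $g$, the action of vector fields on functions and the Lie bracket. None of these depends on the connection. Subtracting the two instances therefore gives
\[
  g(\nabla^{K_1}_X Y, Z) = g(\nabla^{K_2}_X Y, Z) \qquad \text{for all } Z \in \mathfrak{X}(M),
\]
where division by $2$ is harmless since the values lie in $\R$. By bilinearity (Definition~\ref{def:metric}) this becomes $g(D, Z) = 0$ for all $Z$, with $D := \nabla^{K_1}_X Y - \nabla^{K_2}_X Y$. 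Here $D$ is a vector field by Lemma~\ref{lem:vf-module} and Remark~\ref{rem:negatives}.

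The key step is to pass from ``$g(D,Z)=0$ as functions on $M$ for all vector fields $Z$'' to $D = 0$ pointwise. I would avoid needing vector fields that extend arbitrary tangent vectors, which need not exist on a diffeological space, by choosing the single test field $Z := D$. This gives $g(D(x), D(x)) = 0$ for every $x \in M$. Definiteness (Definition~\ref{def:metric}) then yields $D(x) = 0_M(x)$, so $D = 0_M$ and hence $\nabla^{K_1}_X Y = \nabla^{K_2}_X Y$. Weak non-degeneracy in the sense of Proposition~\ref{prop:definite-implies-weak-nondegenerate} is therefore not needed in its full fibrewise form; only definiteness applied to the difference is used. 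This is precisely why Riemannian metrics are taken to be definite.

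The main point to check is that subtraction is legitimate in every step. First, differences of vector fields must be defined, which holds because $\pi_M$ is a bundle of abelian groups (Remark~\ref{rem:negatives}). Second, $g(D, Z) = g(\nabla^{K_1}_X Y, Z) - g(\nabla^{K_2}_X Y, Z)$ must hold, which follows from bilinearity applied fibrewise with $r_1 = 1$ and $r_2 = -1$. Once these are in place the argument is purely formal, and no further diffeological input is needed beyond Propositions~\ref{prop:torsion-free-bracket} and~\ref{prop:metric-compatibility-vf}, which already enter the Koszul formula.

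The argument gives no information about equality $K_1 = K_2$ as morphisms $T^2M \to TM$, in line with Remark~\ref{rem:cov-deriv-vs-connection}.
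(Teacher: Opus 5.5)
Your proposal is correct and follows the paper's proof essentially step for step. You apply the Koszul formula to both connections, set $D=\nabla^{K_1}_X Y-\nabla^{K_2}_X Y$, deduce $g(D,Z)=0$ for all $Z\in\mathfrak{X}(M)$, and substitute $Z=D$, so that definiteness gives $D=0_M$.
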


\begin{proof}
  Fix $X, Y \in \mathfrak{X}(M)$ and let
  $D := \nabla^{K_1}_X Y - \nabla^{K_2}_X Y \in \mathfrak{X}(M)$,
  the difference being taken in the fibrewise abelian group structure of
  $TM$. The right-hand side of the Koszul formula \eqref{eq:koszul} does
  not involve $K$, so
  $g(\nabla^{K_1}_X Y, Z) = g(\nabla^{K_2}_X Y, Z)$, and hence by
  bilinearity $g(D, Z) = 0$, for every $Z \in \mathfrak{X}(M)$.
  Since $D$ is itself a vector field, we may substitute $Z = D$.
  This yields $g(D(x), D(x)) = 0$ for every $x \in M$, and
  definiteness gives $D(x) = 0_M(x)$. Therefore
  $\nabla^{K_1}_X Y = \nabla^{K_2}_X Y$.
\end{proof}

\begin{rem}\label{rem:uniqueness-weak-metric}
  The proof above uses only that the vector field $D$ itself may be
  inserted as $Z$; in particular it does not require every tangent vector
  to be realized by a global vector field. Definiteness may moreover be
  relaxed: if $g$ is merely positive, symmetric, and bilinear (a weak
  metric) and weakly non-degenerate, the same conclusion holds. Indeed,
  positivity implies the fibrewise Cauchy--Schwarz inequality
  $g(v, u)^2 \leq g(v, v)\, g(u, u)$ for $u, v \in TM$ lying in a common
  fibre, so $g(D(x), D(x)) = 0$ forces $g(D(x), u) = 0$ for all
  $u \in TM$ with $\pi_M(u) = x$, and weak non-degeneracy gives
  $D(x) = 0_M(x)$.
\end{rem}

\begin{prob}\label{prob:open-questions}
  The following problems remain open.
  \begin{enumerate}
    \item Does the converse of Proposition~\ref{prop:vert-hori-compatibility} hold?
    \item Can Theorem~\ref{thm:uniqueness-cov-deriv} be strengthened to
      uniqueness of the vertical connection itself?
      Also, in analogy with the classical theory, does $K$ exist whenever
      $g$ is strongly non-degenerate?
  \end{enumerate}
\end{prob}

\section{Curvature}\label{sec:curvature}

In this section, following \cite[Section~3.16]{CC17}, we introduce the curvature
morphism and curvature tensor on elastic diffeological spaces.

\begin{defn}[{\cite[Definition~3.20]{CC17}}]\label{def:curvature-morphism}
  Let $M$ be an elastic diffeological space and $K \colon T^2M \to TM$ a vertical
  connection on $M$. The \textbf{curvature morphism} $C_K \colon T^3M \to TM$ of $K$
  is defined by
  \begin{align*}
    C_K := K \circ T(K) \circ \tau_{TM} - K \circ T(K).
  \end{align*}
 Both composites $K \circ T(K) \circ \tau_{TM}$ and $K \circ T(K)$ are morphisms $T^3M \to TM$ lying over the same morphism $\pi_M \circ \pi_{TM} \circ \pi_{T^2M} \colon T^3M \to M$, by axiom~(b) of Definition~\ref{def:vertical-connection} and naturality of $\pi$; their difference is therefore formed in the fibrewise abelian group structure of the bundle $\pi_M \colon TM \to M$, whose fibres admit negatives, being $\R$-modules. 
 The equation $C_K = 0$ means that $C_K$ factors through the zero section, i.e.\ $C_K = 0_M \circ \pi_M \circ \pi_{TM} \circ \pi_{T^2M}$; in this case the connection $K$ is said to be \textbf{flat}.
\end{defn}

\begin{ex}\label{ex:flat-euclidean}
  Let $M = \R^n$ with the canonical affine vertical connection
  $K_0(u, v_0, v_1, v_{01}) = (u, v_{01})$ of
  Example~\ref{ex:euclidean-vertical-connection}. Extending the coordinates
  of that example, identify $T^3\R^n \cong (\R^n)^{8}$ with coordinates
  $(v_S)_{S \subseteq \{0,1,2\}}$, where $v_\emptyset = u$ and the
  subscripts record the tangent directions; the first four components
  $(u, v_0, v_1, v_{01})$ form the base point in $T^2\R^n$ and the last
  four $(v_2, v_{02}, v_{12}, v_{012})$ its derivative in the new
  direction $2$. The morphism $\tau_{TM}$, being the component of the
  canonical flip at the object $TM$, exchanges the two outer directions
  $1$ and $2$ while fixing the direction $0$ internal to $TM$:
  \begin{align*}
    \tau_{TM}(v_S)_S = (v_{\sigma(S)})_S, \qquad \sigma = (1\ 2).
  \end{align*}
  Since $K_0$ is linear, its tangent map acts by $K_0$ on both blocks:
  \begin{align*}
    T(K_0)(v_S)_S = (u,\, v_{01},\, v_2,\, v_{012}) \in T^2\R^n,
  \end{align*}
  whence $K_0 \circ T(K_0) = (u, v_{012})$. Precomposing with
  $\tau_{TM}$ replaces $(v_1, v_{01})$ by $(v_2, v_{02})$ and vice versa,
  so
  \begin{align*}
    K_0 \circ T(K_0) \circ \tau_{TM}
      = K_0(u,\, v_{02},\, v_1,\, v_{012}) = (u, v_{012}).
  \end{align*}
  The two composites coincide, and therefore $C_{K_0} = 0$: the canonical
  connection on $\R^n$ is flat.
\end{ex}

\begin{defn}[{\cite[Definition~3.21]{CC17}}]\label{def:curvature-tensor}
  Let $X, Y, Z \in \mathfrak{X}(M)$ be vector fields on $M$.
  The \textbf{curvature tensor} $R^K(X,Y,Z) \in \mathfrak{X}(M)$
  is defined using the curvature morphism $C_K$ by
  \begin{align*}
    R^K(X,Y,Z) := C_K \circ T^2(Z) \circ T(X) \circ Y.
  \end{align*}
\end{defn}

This curvature tensor admits an expression in terms of covariant derivatives,
as in ordinary Riemannian geometry.

\begin{prop}[{\cite[Proposition~3.22]{CC17}}]\label{prop:curvature-covariant-derivative}
  For vector fields $X, Y, Z \in \mathfrak{X}(M)$ on $M$, the following holds:
  \begin{align*}
    R^K(X,Y,Z)
      = \nabla^K_X (\nabla^K_Y Z)
        - \nabla^K_Y (\nabla^K_X Z)
        - \nabla^K_{[X,Y]} Z.
  \end{align*}
\end{prop}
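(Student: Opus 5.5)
The plan is to expand both composites in $C_K$ after precomposing with $T^2(Z)\circ T(X)\circ Y$, and identify each with an iterated covariant derivative plus a correction term. For the term $K\circ T(K)\circ T^2(Z)\circ T(X)\circ Y$, functoriality of $T$ gives $T(K)\circ T^2(Z)\circ T(X) = T\bigl(K\circ T(Z)\circ X\bigr) = T(\nabla^K_X Z)$. Hence this composite equals $K\circ T(\nabla^K_X Z)\circ Y = \nabla^K_Y(\nabla^K_X Z)$ by Definition~\ref{def:covariant-derivative}.

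For the other composite $K\circ T(K)\circ\tau_{TM}\circ T^2(Z)\circ T(X)\circ Y$, I would first use naturality of the flip, $\tau_{TM}\circ T^2(Z) = T^2(Z)\circ\tau_M$, to move $\tau$ inward. This gives $K\circ T(K)\circ T^2(Z)\circ\tau_M\circ T(X)\circ Y$. The key point is then to rewrite $\tau_M\circ T(X)\circ Y$ using the Lie bracket of Definition~\ref{def:lie-bracket}, namely $\ell(Y,X)=T(X)\circ Y-\tau_M\circ T(Y)\circ X=\lambda_{2,M}\langle X,[Y,X]\rangle$. Applying $\tau_M$, which is an involution and is additive on the relevant bundle structures, yields
\[
\tau_M\circ T(X)\circ Y = T(Y)\circ X + \tau_M\circ\lambda_{2,M}\circ\langle X,[Y,X]\rangle .
\]
Here the sum is taken in the appropriate bundle ($T(\pi_M)$ after flipping). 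One must check that $\tau_M$ exchanges the two bundle structures on $T^2M$, so that the difference is transported correctly; this is a standard tangent-category axiom. Writing $\tau_M\lambda_{2,M}\langle X,W\rangle = T0_M\circ X + \lambda_M\circ W$ (with the sum taken over $T(\pi_M)$), the first summand composed with $K\circ T(K)\circ T^2(Z)$ gives $K\circ T(K\circ T(Z)\circ Y)\circ X=\nabla^K_X\nabla^K_Y Z$ by the same argument as above.

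It remains to push $K\circ T(K)\circ T^2(Z)$ through the sum and evaluate it on the bracket part. Additivity of $K\circ T(K)\circ T^2(Z)$ with respect to $T(\pi_M)$ holds because $T$ of a linear bundle morphism is linear for the tangent bundle structure, and $K$ is linear for both structures by Remark~\ref{rem:K-linear-bundle}. On the vertical term, naturality of $\lambda$ gives $T^2(Z)\circ\lambda_M = \lambda_{TM}\circ T(Z)$. Axiom (c) gives $T(K)\circ\lambda_{TM} = \lambda_M\circ K$, and axiom (a) gives $K\lambda_M=\id$. Together these yield $K\circ T(Z)\circ[Y,X] = \nabla^K_{[Y,X]}Z = -\nabla^K_{[X,Y]}Z$, using antisymmetry (Proposition~\ref{thm:jacobi}) and linearity in the first slot (Proposition~\ref{prop:covariant-derivative-properties}). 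The zero-section term $T(0_M)\circ X$ contributes a zero in the fibre, since $K\circ T(K)\circ T^2(Z)\circ T(0_M) = T\bigl(K\circ T(Z)\circ 0_M\bigr)$ lands in zeros by linearity.

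Subtracting the two composites gives the formula. The main obstacle is the bookkeeping of which additive structure each sum lives in, and verifying that $K\circ T(K)\circ T^2(Z)$ is additive for the one needed. I would handle this by following the proof of Proposition~\ref{prop:torsion-free-bracket} in \cite{CC17}, which performs exactly this manipulation.
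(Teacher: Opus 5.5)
Your computation is correct and follows the same route as the paper, which gives no independent proof but defers to \cite[Proposition~3.22]{CC17}. Your steps (naturality of $\tau$ to move the flip past $T^2(Z)$, the bracket identity for $\tau_M\circ T(X)\circ Y$, additivity of $K\circ T(K)\circ T^2(Z)$, and axioms (a) and (c) of Definition~\ref{def:vertical-connection}) are the direct tangent-category computation behind that cited result.

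One piece of the bookkeeping you flagged is mislabelled, however. From the formal definition $\lambda_2=\tau\circ{+}_T\circ(T0\times_0\lambda)$ and $\tau_M\circ\tau_M=\id$ one gets $\tau_M\circ\lambda_{2,M}\circ\langle X,W\rangle = T(0_M)\circ X +_{TM} \lambda_M\circ W$. This is a sum in the $\pi_{TM}$-structure, not over $T(\pi_M)$. Both summands have $\pi_{TM}$-projection $0_M$, while their $T(\pi_M)$-projections are $X$ and $0_M$, so a sum over $T(\pi_M)$ is not even defined. (The informal formula after Definition~\ref{def:vertical-lift-extension} writes $+_{TM}$ where the $T(\pi_M)$-addition $T(+_M)$ is meant, which is probably where the slip came from.)

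Consequently the second splitting needs additivity of $K\circ T(K)\circ T^2(Z)$ for the $\pi_{TM}$-structure as well, not only for $T(\pi_M)$. This does hold. $T^2(Z)$ and $T(K)$ are additive for the $\pi$-structures by naturality of $+$, and $K$ is additive from $\pi_{TM}$ to $\pi_M$ by Remark~\ref{rem:K-linear-bundle}.

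In the zero-section term the outer $K$ is missing. The correct chain is $K\circ T(K)\circ T^2(Z)\circ T(0_M)\circ X = K\circ T(K\circ T(Z)\circ 0_M)\circ X = K\circ T(0_M)\circ X = 0_M$. This uses $T(Z)\circ 0_M = 0_{TM}\circ Z$, $K\circ 0_{TM} = 0_M\circ\pi_M$, and $K\circ T(0_M) = 0_M\circ\pi_M$.

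With these corrections the argument is complete, and the resulting formula agrees with a direct check for $K_\Gamma$ on $\R^n$.
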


\begin{rem}\label{rem:curvature-pending}
  The curvature recalled above enjoys the familiar properties of the classical theory, established by Cockett and Cruttwell at the level of an arbitrary tangent category: the expression of $R^K$ in terms of the covariant derivative (Proposition~\ref{prop:curvature-covariant-derivative} above), the identities for curvature and torsion, and, for torsion-free connections, the Bianchi identities \cite[Theorem 3.34]{CC17}. These results require the tangent
  bundles to admit negatives, which holds in $\mathsf{Elast}$ since the fibres of $T(M)$ are $\R$-modules; they therefore apply verbatim in the present setting, and we do not reproduce the proofs. 

  In this paper we do not pursue the interaction between curvature and a Riemannian metric; the metric symmetries of the curvature tensor and sectional curvature on elastic Riemannian diffeological spaces are left to future work.
\end{rem}

\section{Geodesics}\label{sec:geodesics}

Throughout this section, $M$ denotes an elastic diffeological space, and
$(K, H)$ a full connection on $M$ (Definition~\ref{def:full-connection}),
unless further hypotheses are stated. We write $\Path(M)$ for the
diffeological space $C^\infty(\R, M)$ of smooth paths, with the functional
diffeology, and for $x, y \in M$ we write $\Path(M; x, y)$ for the
subspace of paths $\gamma$ with $\gamma(0) = x$ and $\gamma(1) = y$.

\subsection{Dynamical Systems and Curve Objects}\label{subsec:parallel-transport}

Unlike in ordinary Riemannian geometry, the parallel transport of a given tangent
vector $v \in TN$ along a smooth map $f$ does not always exist globally.
To accommodate parallel transport, we introduce the following notions of
dynamical system and curve object.

\begin{defn}[{\cite[Definition~2.24]{CCL}}]\label{def:dynamical-system}
  Let $V$ be a vector field on $M$ and $x \in M$.
  The triple $(M, V, x)$ is called a \textbf{dynamical system};
  $V$ is the \textbf{differential transition} and $x$ is the \textbf{initial state}.
  A \textbf{morphism} $f \colon (M, V, x) \to (M', V', x')$ of dynamical systems
  is a smooth map $f \colon M \to M'$ satisfying $f(x) = x'$ and
  $T(f) \circ V = V' \circ f$; that is, the following diagram commutes:
  \[ \xymatrix{1 \ar[r]^x \ar[dr]_{x'} & M \ar[d]_f \ar[r]^V & T(M) \ar[d]^{T(f)} \\
               & M' \ar[r]_{V'} & T(M') } \]
\end{defn}

\begin{rem}\label{rem:initial-state}
  In a general tangent category, the initial state is a morphism
  $x \colon \mathbf{1} \to M$ from the terminal object $\mathbf{1}$
  (\cite[Definition~2.24]{CCL}).
  In elastic diffeology, since each object is a set, one may take points directly;
  however, individual points can also be viewed as morphisms from $\mathbf{1}$
  when convenient.
\end{rem}

\begin{defn}[{\cite[Definition~3.1]{CCL}}]\label{def:parametrized-dynamical-system}
  Let $V$ be a vector field on $M$ and $g \colon X \to M$ a smooth map.
  The triple $(M, V, g)$ is called a \textbf{parametrized dynamical system},
  and $g$ is the \textbf{parametrized initial state}.
\end{defn}

\begin{defn}[{\cite[Definition~3.1]{CCL}}]\label{def:parametrized-solution}
  Let $C$ be an elastic diffeological space, $c_1 \colon C \to TC$ a vector field
  on $C$, and $c_0 \in C$. A \textbf{solution} to the parametrized dynamical system
  $(M, V, g)$ is a smooth map $\gamma \colon C \times X \to M$ satisfying
  the initial condition $\gamma(c_0, p) = g(p)$ and the differential condition
  $T(\gamma) \circ (c_1 \times 0_X) = V \circ \gamma$.
  That is, the following diagram commutes:
 \[
   \xymatrix{
     X \ar[r]^{\langle c_0, 1\rangle} \ar[dr]_{g}
       & C \times X \ar[r]^{c_1 \times 0} \ar[d]_{\gamma}
       & T(C \times X) \ar[d]^{T(\gamma)} \\
     & M \ar[r]_{V}
       & TM
   }
 \]
\end{defn}

\begin{rem}\label{rem:point-as-constant-map}
  As in Remark~\ref{rem:initial-state}, $c_0 \in C$ may also be regarded, for
  any object $X$, as the constant map $X \to C$ sending every point to $c_0$;
  we write this map simply as $c_0$ as well.
\end{rem}

\begin{defn}[{\cite[Definition~4.1]{CCL}}]\label{def:curve-object}
  A dynamical system $(C, c_1, c_0)$ is a \textbf{curve object} if it satisfies
  the following:
  \begin{enumerate}
    \item \textbf{Preinitiality}: For any parametrized dynamical system $(M, V, g)$,
      a solution $\gamma \colon C \times X \to M$, if it exists, is unique:
    \[
      \xymatrix{
        X \ar[r]^{\langle c_0, 1\rangle} \ar[dr]_{g}
          & C \times X \ar[r]^{c_1 \times 0} \ar@{-->}[d]_{\gamma}
          & T(C \times X) \ar@{-->}[d]^{T(\gamma)} \\
        & M \ar[r]_{V}
          & TM
      }
    \]
    \item \textbf{Self-commutativity}: $\tau_C \circ T(c_1) \circ c_1
      = T(c_1) \circ c_1$.
    \item \textbf{Completeness}: The parametrized dynamical system
      $(C, c_1, \id_C)$ on $C$ has a unique solution $\sigma \colon C \times C \to C$:
    \[
      \xymatrix{
        C \ar[r]^{\langle c_0, 1\rangle} \ar[dr]_{\id_C}
          & C \times C \ar[r]^{c_1 \times 0} \ar@{-->}[d]_{\sigma}
          & T(C \times C) \ar@{-->}[d]^{T(\sigma)} \\
        & C \ar[r]_{c_1}
          & TC
      }
    \]
  \end{enumerate}
\end{defn}

\begin{defn}[{\cite[Definition~4.9]{CCL}}]\label{def:complete-vector-field}
  A vector field $V$ on $M$ is called \textbf{complete} if the parametrized
  dynamical system $(M, V, \id_M)$ with parametrized initial state $\id_M$
  has a solution $\gamma \colon C \times M \to M$:
  \[
    \xymatrix{
      M \ar[r]^{\langle c_0, 1\rangle} \ar[dr]_{\id_M}
        & C \times M \ar[r]^{c_1 \times 0} \ar@{-->}[d]_{\gamma}
        & T(C \times M) \ar@{-->}[d]^{T(\gamma)} \\
      & M \ar[r]_{V}
        & TM
    }
  \]
\end{defn}

A curve object plays the role of $\R$ (or an interval therein) in a general tangent
category. In the differential geometry of finite-dimensional manifolds,
parallel transport of vector fields along paths reduces to a linear ODE in local
coordinates, and the existence and uniqueness of local solutions is guaranteed by
the fundamental theorem of ODEs. In the category of convenient vector spaces,
however, the spaces lack a complete normed structure, so even for smooth vector
fields neither existence nor uniqueness of solutions is guaranteed
(\cite[Remark~4.5]{CCL}). Consequently, solutions to dynamical systems need not
exist even when the domain is $\R$, and hence parallel transport need not always
exist. The same applies in the category of elastic diffeological spaces,
which contains the category of convenient vector spaces.

Parallel transport always exists along curve objects satisfying the condition
of \emph{linear completeness}. See \cite[Theorem~5.20]{CC17} and
\cite[Remark~5.4]{CCL} for details.

\subsection{Definition of Geodesics}\label{subsec:geodesic-def}

Let $\gamma$ be a path on $M$, and let $\partial$ denote the unit vector field
on $\R$. We call $\dot{\gamma} = T(\gamma) \circ \partial = [\gamma, t, \partial]$
the \textbf{velocity vector field} of $\gamma$.

\begin{rem}\label{rem:velocity-along}
  $\dot{\gamma}$ is a vector field along $\gamma$
  (Definition~\ref{def:vf-along-map}). Indeed, by naturality of $\pi$,
  \begin{align*}
    \pi_{M} \circ \dot{\gamma}
      = \pi_{M} \circ T(\gamma) \circ \partial
      = \gamma \circ \pi_{\R} \circ \partial
      = \gamma \circ \id_{\R}
      = \gamma.
  \end{align*}
\end{rem}

\begin{defn}\label{def:geodesic}
Let $K$ be a vertical connection on $M$.
A path $\gamma \colon \R \to M$ is called a \textbf{geodesic} for $K$ if
\begin{align*}
   \nabla^K_{\partial} \dot{\gamma} = 0_M \circ \gamma,
\end{align*}
where $\nabla^K_\partial \dot\gamma = K \circ T(\dot\gamma) \circ \partial$ is
the covariant derivative of the vector field $\dot\gamma$ along $\gamma$
(Definition~\ref{def:covariant-derivative-along}).
\end{defn}

\subsection{Geodesic Flow}\label{subsec:geodesic-flow}

\begin{defn}[{\cite[Definition~3.10]{CCL}}]\label{def:nth-order-dynamical-system}
  Let $n \geq 1$ be an integer.
  An \textbf{$n$th-order dynamical system} on $M$ is a triple
  $(M, V, g)$ consisting of an initial state $g \colon X \to T^{n-1}M$ and
  an $n$th-order vector field, i.e., a smooth map
  $V \colon T^{n-1}M \to T^nM$, satisfying
  \begin{align*}
      \pi_{T^{n-1}M} \circ V
        = T(\pi_{T^{n-2}M}) \circ V
        = \cdots
        = T^{n-1}(\pi_M) \circ V
        = \id_{T^{n-1}M}.
  \end{align*}
  When $n = 1$ this coincides with the dynamical system of
  Definition~\ref{def:dynamical-system}.
  Moreover, if $(M, V, g)$ is an $n$th-order dynamical system,
  then $(TM, V, g)$ is an $(n-1)$th-order dynamical system.
\end{defn}

\begin{defn}[{\cite[Definition~3.11]{CCL}}]\label{def:nth-derivative}
  Let $C$ be a curve object.
  For any smooth map $F \colon C \times X \to M$,
  the \textbf{$n$th derivative} $F^{(n)} \colon C \times X \to T^nM$ is
  defined inductively by $F^{(0)} := F$ and, for each $n \geq 1$,
  \begin{align*}
      F^{(n)} := T(F^{(n-1)}) \circ (c_1 \times 0_X).
  \end{align*}
\end{defn}

\begin{rem}\label{rem:first-derivative}
  $F^{(1)}$ is nothing other than $T(F) \circ (c_1 \times 0_X)$,
  the left-hand side of the differential condition
  in the definition of a solution to a first-order dynamical system.
\end{rem}

\begin{defn}[{\cite[Definition~3.12]{CCL}}]\label{def:nth-solution}
  A \textbf{solution} to the $n$th-order dynamical system $(M, V, g)$
  is a smooth map $F \colon C \times X \to M$ satisfying the
  \textbf{initial condition}
  $F^{(n-1)} \circ \langle c_0, \id_X \rangle = g$ and the
  \textbf{differential condition} $F^{(n)} = V \circ F^{(n-1)}$.
\end{defn}

\begin{prop}[{\cite[Proposition~3.13]{CCL}}]\label{prop:higher-dynamical-system}
  Let $(M, V, g)$ be an $n$th-order dynamical system.
  If $\tilde{F} \colon C \times X \to TM$ is a solution to the
  $(n-1)$th-order dynamical system $(TM, V, g)$,
  then $F := \pi_M \circ \tilde{F}$ is a solution to the
  $n$th-order dynamical system $(M, V, g)$. In particular
  $\pi_M \circ F^{(1)} = F$.
\end{prop}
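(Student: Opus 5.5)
The argument runs by induction on the order of the derivative: for every $k \geq 0$ I will establish the identity
\[
  \tilde{F}^{(k)} = F^{(k+1)} \colon C \times X \to T^{k+1}M .
\]
Given this identity, both conditions of Definition~\ref{def:nth-solution} for $F$ follow from those for $\tilde{F}$. The initial condition for $\tilde{F}$, regarded as a solution of the $(n-1)$th-order system on $TM$, reads $\tilde{F}^{(n-2)} \circ \langle c_0, \id_X\rangle = g$; with $k = n-2$ this becomes $F^{(n-1)} \circ \langle c_0, \id_X\rangle = g$. Similarly, $\tilde{F}^{(n-1)} = V \circ \tilde{F}^{(n-2)}$ becomes $F^{(n)} = V \circ F^{(n-1)}$. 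Since $T^{n-2}(TM) = T^{n-1}M$, the maps $g$ and $V$ have the correct sources and targets.

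The base case $k = 0$ says $\tilde{F} = F^{(1)} = T(\pi_M \circ \tilde{F}) \circ (c_1 \times 0_X)$, and this is where the order hypothesis is used. For $n \geq 2$, the case $k=0$ of the differential condition for $\tilde{F}$ (taking $n-1 \geq 1$) gives $\tilde{F}^{(n-1)} = V \circ \tilde{F}^{(n-2)}$. I first unwind $\tilde F^{(1)}$ as $T(\tilde F)\circ(c_1\times 0_X)$. By naturality of $\pi$ and the identity $T(\pi_M)\circ T(\tilde F) = T(F)$,
\[
  T(\pi_M)\circ\tilde F^{(1)} = T(F)\circ(c_1\times 0_X) = F^{(1)} .
\]
It therefore suffices to show $T(\pi_M)\circ \tilde F^{(1)} = \tilde F$. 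I would obtain this from the differential condition together with the axiom $T^{n-1}(\pi_M)\circ V = \id$ in Definition~\ref{def:nth-order-dynamical-system}, by applying the appropriate iterate of $\pi$ to $\tilde F^{(n-1)} = V\circ\tilde F^{(n-2)}$ and then descending through the lower derivatives. For this descent I would use the general identity $T^{j}(\pi)\circ G^{(j+1)} = T^{j}(\ldots)$: each $\pi_{T^{j}M}\circ G^{(j+1)} = G^{(j)}$ follows from naturality of $\pi$ and $\pi_C\circ c_1 = \id$. Commuting the outermost $T^{n-2}(\pi_M)$ past the derivative operator in this way reduces the top-order statement to $T(\pi_M)\circ\tilde F^{(1)}=\tilde F$.

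The inductive step is then formal. If $\tilde F^{(k)} = F^{(k+1)}$, then $\tilde F^{(k+1)} = T(\tilde F^{(k)})\circ(c_1\times0_X) = T(F^{(k+1)})\circ(c_1\times 0_X) = F^{(k+2)}$ by Definition~\ref{def:nth-derivative}. The final claim $\pi_M\circ F^{(1)} = F$ follows from naturality of $\pi$ together with $\pi_{C\times X}\circ(c_1\times0_X) = \id$, since $c_1$ and $0_X$ are sections.

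The main obstacle is the base case, and specifically the bookkeeping needed to commute $T^{j}(\pi_M)$ past iterated derivatives so that the axiom $T^{n-1}(\pi_M)\circ V=\id$, which concerns only the top-order projection, yields the first-order relation $T(\pi_M)\circ\tilde F^{(1)} = \tilde F$. I would first try to prove by a separate induction that $T^{j}(\pi_M)\circ \tilde F^{(j)} = T^{j-1}(\pi_M)\circ\ldots$, following the argument in \cite[Proposition~3.13]{CCL}. For $n=1$ the system on $TM$ has order $0$, and I would read the statement as vacuous or treat it by the convention of \cite{CCL}.
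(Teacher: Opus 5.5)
Your argument is correct for $n \ge 2$ (the only case in which $(TM,V,g)$ makes sense). It is the standard argument behind \cite[Proposition~3.13]{CCL}, which the paper cites without giving a proof of its own.

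Your base case is left as a sketch with unfinished formulas, and it needs two inductive identities stated explicitly. For any $G\colon C\times X\to Y$ and $h\colon Y\to Z$, these are $T^{j}(h)\circ G^{(j)}=(h\circ G)^{(j)}$ and $\pi_{T^{j}Y}\circ G^{(j+1)}=G^{(j)}$. With them, apply $T^{n-1}(\pi_M)=T^{n-2}(T(\pi_M))$ (not $T^{n-2}(\pi_M)$) to $\tilde F^{(n-1)}=V\circ\tilde F^{(n-2)}$. This gives $(F^{(1)})^{(n-2)}=F^{(n-1)}=\tilde F^{(n-2)}$, and stripping the $n-2$ derivatives with projections yields $F^{(1)}=\tilde F$.
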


\begin{rem}[{\cite[Corollary~3.14]{CCL}}]\label{rem:reduction-to-first-order}
  By applying the above proposition repeatedly, the problem of solving any
  $n$th-order dynamical system $(M, V, g)$ reduces to that of solving
  the first-order dynamical system $(T^{n-1}M, V, g)$ on the iterated tangent bundle.
\end{rem}

\begin{defn}[{\cite[Section~3.5]{CCL}}]\label{def:geodesic-spray}
  Let $(K, H)$ be a full connection on $M$.
  Define the map $S \colon TM \to T^2M$ by
  \begin{align*}
      S := H \circ \langle \id_{TM}, \id_{TM} \rangle.
  \end{align*}
  Since $S$ satisfies $\pi_{TM} \circ S = T(\pi_M) \circ S = \id_{TM}$,
  it is a second-order vector field on $M$;
  we call it the \textbf{geodesic spray} of $(K, H)$.

  A solution $\Phi \colon C \times TM \to M$ to the second-order dynamical system
  $(M, S, \id_{TM})$ with initial state $\id_{TM} \colon TM \to TM$
  is called the \textbf{geodesic flow} associated with $(K, H)$.
\end{defn}

By definition, the geodesic flow satisfies $\Phi^{(2)} = S \circ \Phi^{(1)}$;
equivalently, the following diagram commutes:
\[
  \xymatrix{
    C \times TM \ar[d]_{\Phi^{(1)}} \ar[dr]^{\Phi^{(2)}} & \\
    TM \ar[r]_{S} & T^2M
  }
\]

\begin{prop}[{\cite[Section~3.5]{CCL}}]\label{prop:acceleration-zero}
  The geodesic flow $\Phi \colon C \times TM \to M$ satisfies
  \begin{align*}
      K \circ \Phi^{(2)} = 0_{M} \circ \Phi.
  \end{align*}
\end{prop}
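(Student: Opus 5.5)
The plan is to substitute the differential condition of the geodesic flow and then use axiom~(a) of a full connection. By definition of a solution to the second-order system $(M,S,\id_{TM})$ (Definition~\ref{def:nth-solution} with $n=2$), we have
\begin{align*}
  \Phi^{(2)} = S \circ \Phi^{(1)} = H \circ \langle \id_{TM}, \id_{TM}\rangle \circ \Phi^{(1)} = H \circ \langle \Phi^{(1)}, \Phi^{(1)}\rangle .
\end{align*}
Composing with $K$ and invoking axiom~(a) of Definition~\ref{def:full-connection}, $K \circ H = 0_M \circ \pi_M \circ \pr_1$, gives
\begin{align*}
  K \circ \Phi^{(2)} = 0_M \circ \pi_M \circ \pr_1 \circ \langle \Phi^{(1)}, \Phi^{(1)}\rangle = 0_M \circ \pi_M \circ \Phi^{(1)} .
\end{align*}

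The remaining step is the identity $\pi_M \circ \Phi^{(1)} = \Phi$, which is exactly the final assertion of Proposition~\ref{prop:higher-dynamical-system}. To justify it directly, note that $\Phi^{(1)} = T(\Phi)\circ(c_1 \times 0_{TM})$. Naturality of $\pi$ gives $\pi_M \circ T(\Phi) = \Phi \circ \pi_{C\times TM}$. Since $c_1$ and $0_{TM}$ are sections, $\pi_{C\times TM}\circ(c_1\times 0_{TM}) = \id$, using that $T$ preserves products (Proposition~\ref{prop:tangent-structure-1}) so that $\pi_{C\times TM}=\pi_C\times\pi_{TM}$. Substituting yields $K\circ\Phi^{(2)} = 0_M\circ\Phi$.

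Before composing with $H$, one should check that $\langle \Phi^{(1)},\Phi^{(1)}\rangle$ lands in the fibre product $TM\times_M TM$. This is trivial, since both components coincide. There is no real obstacle: the only point needing care is the bookkeeping of which projection $\pr_1$ refers to in axiom~(a). That is harmless here because both arguments of $H$ are equal.
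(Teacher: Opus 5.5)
Your proof is correct and follows essentially the same route as the paper: substitute the differential condition $\Phi^{(2)} = S \circ \Phi^{(1)}$, apply axiom~(a) of a full connection, and finish with $\pi_M \circ \Phi^{(1)} = \Phi$. Your direct verification of that last identity, via naturality of $\pi$ and product preservation, is the same argument the paper uses in Lemma~\ref{lem:flow-initial}(i), and it shows the identity holds for any smooth $\Phi$, not only for solutions.
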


\begin{proof}
  By the differential condition and Definition~\ref{def:geodesic-spray},
  \begin{align*}
      K \circ \Phi^{(2)}
        = K \circ S \circ \Phi^{(1)}
        = K \circ H \circ \langle \id_{TM}, \id_{TM} \rangle \circ \Phi^{(1)}.
  \end{align*}
  By axiom~(a) of full connections,
  $K \circ H = 0_M \circ \pi_M \circ \pr_1$, so
  \begin{align*}
      K \circ \Phi^{(2)}
        = 0_M \circ \pi_M \circ \pr_1
          \circ \langle \id_{TM}, \id_{TM} \rangle \circ \Phi^{(1)}
        = 0_M \circ \pi_M \circ \Phi^{(1)}
        = 0_M \circ \Phi,
  \end{align*}
  the last equality by $\pi_M \circ \Phi^{(1)} = \Phi$
  (Proposition~\ref{prop:higher-dynamical-system}).
\end{proof}

\subsection{Existence of Geodesics}\label{subsec:geodesic-existence}

The subcategory on which $\R$ serves as a curve object plays a distinguished
role in what follows; we name it.

\begin{defn}\label{def:elast-lc}
  Let $\mathsf{Elast}_{\mathrm{curve}}$ denote the full subcategory of $\mathsf{Elast}$ consisting of those elastic diffeological spaces $M$ such that, for every $n \geq 0$, the dynamical system $(\R, \partial, 0)$
  is preinitial for the iterated tangent bundle $T^nM$ (Definition~\ref{def:curve-object}); that is, every parametrized dynamical system on $T^nM$ with domain $\R$ admits at most one solution.
\end{defn}

\begin{rem}\label{rem:elast-curve-closure}
  Conditions (2) and (3) of Definition~\ref{def:curve-object} concern $(\R, \partial, 0)$ itself and hold automatically, the flow of $\partial$ being translation; thus $M \in \mathsf{Elast}_{\mathrm{curve}}$ precisely when $(\R, \partial, 0)$ is a curve object for every $T^nM$. Requiring preinitiality for all iterated tangent bundles, rather than for $M$ alone, makes $\mathsf{Elast}_{\mathrm{curve}}$ stable under the tangent functor---in analogy with the elasticity axiom (E5)---and is exactly what the reduction of higher-order dynamical systems to first-order systems on iterated tangent bundles requires (Remark~\ref{rem:reduction-to-first-order}).
\end{rem}

\begin{rem}\label{rem:manifolds-curve}
  In the category of smooth manifolds, $(\R, \partial, 0)$ is a curve object \cite[Example~4.2]{CCL}. 
  It follows that every smooth manifold belongs to $\mathsf{Elast}_{\mathrm{curve}}$: given two solutions $\gamma, \gamma' \colon \R \times X \to M$ of a parametrized dynamical system on a manifold $M$ with an arbitrary elastic parameter object $X$, restricting along each point $x \colon * \to X$ (using the naturality of the zero section) exhibits $\gamma(-, x)$ and
  $\gamma'(-, x)$ as integral curves of the same vector field with the same initial value; these coincide by preinitiality in the category of manifolds, whence $\gamma = \gamma'$.
\end{rem}

As in ordinary Riemannian geometry,
the existence of geodesics is guaranteed by the geodesic flow.

\begin{thm}\label{thm:geodesic-existence}
  Let $M \in \mathsf{Elast}_{\mathrm{curve}}$ and let $(K, H)$ be a full connection
  on $M$ whose geodesic spray
  $S = H \circ \langle \id_{TM}, \id_{TM} \rangle$ is complete.
  Then for every $v_0 \in TM$ there exists a unique
  geodesic $\gamma \colon \R \to M$ for $K$ satisfying
  \begin{align*}
      \nabla^K_{\partial} \dot{\gamma} = 0_{M} \circ \gamma,
      \qquad
      \dot{\gamma}(0) = v_0.
  \end{align*}
\end{thm}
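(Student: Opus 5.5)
Existence comes from the geodesic flow and uniqueness from preinitiality of $(\R,\partial,0)$ on $TM$; the crux is to show that a path is a geodesic exactly when its velocity $\dot\gamma\colon \R\to TM$ solves the first-order system $(TM,S,v_0)$, i.e.\ $T(\dot\gamma)\circ\partial = S\circ\dot\gamma$.

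\emph{Existence.} Completeness of $S$ as a first-order vector field on $TM$ (Definition~\ref{def:complete-vector-field} with $C=\R$) gives a solution $\tilde\Phi\colon \R\times TM\to TM$ of $(TM,S,\id_{TM})$. By Proposition~\ref{prop:higher-dynamical-system}, $\Phi:=\pi_M\circ\tilde\Phi$ is a geodesic flow with $\Phi^{(1)}=\tilde\Phi$. Set $\gamma:=\Phi\circ\langle \id_\R, v_0\rangle$, with $v_0$ a constant map $\R\to TM$. The inclusion $j=\langle\id_\R,v_0\rangle$ satisfies $T(j)\circ\partial=(\partial\times 0)\circ j$, because the second component is constant and $T$ preserves products. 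Hence $\Phi^{(n)}\circ j=\gamma^{(n)}$. In particular $\dot\gamma=\Phi^{(1)}\circ j$, so $\dot\gamma(0)=\tilde\Phi(0,v_0)=v_0$. Moreover
\[
\nabla^K_\partial\dot\gamma = K\circ T(\dot\gamma)\circ\partial = K\circ\Phi^{(2)}\circ j = 0_M\circ\Phi\circ j = 0_M\circ\gamma,
\]
by Proposition~\ref{prop:acceleration-zero}.

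\emph{Uniqueness.} Let $\gamma$ be any geodesic with $\dot\gamma(0)=v_0$, and put $A:=T(\dot\gamma)\circ\partial\colon\R\to T^2M$. We have $\pi_{TM}\circ A=\dot\gamma$ by naturality of $\pi$. We also have $T(\pi_M)\circ A=T(\gamma)\circ\partial=\dot\gamma$, since $\pi_M\circ\dot\gamma=\gamma$ (Remark~\ref{rem:velocity-along}). Finally $K\circ A=0_M\circ\gamma$ by the geodesic equation. On the other hand, $S\circ\dot\gamma$ has the same three components: axioms (a), (b) of Definition~\ref{def:horizontal-connection} give the first two, and full-connection axiom (a) gives $K\circ S\circ\dot\gamma=0_M\circ\pi_M\circ\dot\gamma=0_M\circ\gamma$. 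By Proposition~\ref{prop:full-connection-fibre-product}, $\Theta=\langle\pi_{TM},T(\pi_M),K\rangle$ is jointly monic, so $A=S\circ\dot\gamma$. Thus $\dot\gamma$ solves the first-order system $(TM,S,v_0)$ with $X=\mathbf 1$, and so does $\Phi^{(1)}\circ j$. Since $M\in\mathsf{Elast}_{\mathrm{curve}}$, the system $(\R,\partial,0)$ is preinitial for $TM$, so the two solutions coincide. Applying $\pi_M$ shows the two geodesics agree.

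The main obstacle is this identification of geodesics with solutions of the spray equation, which rests on the fibre-product property of $\Theta$ and the horizontal axioms. A secondary technical check is the compatibility $T(j)\circ\partial=(\partial\times0)\circ j$, needed to restrict the parametrized flow to a single initial vector; I would verify it on the product decomposition $T(\R\times TM)\cong T\R\times T^2M$.
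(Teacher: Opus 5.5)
Your proof is correct and follows the paper's route. Existence comes from slicing the geodesic flow along $\iota_{v_0}$ and using $K\circ\Phi^{(2)}=0_M\circ\Phi$, and uniqueness comes from preinitiality of $(\R,\partial,0)$ on $TM$. Your use of the fibre-product property of $\Theta=\langle\pi_{TM},T(\pi_M),K\rangle$ to show that every geodesic satisfies $T(\dot\gamma)\circ\partial=S\circ\dot\gamma$ makes explicit a step the paper's one-line uniqueness argument leaves implicit. Without that step, preinitiality only gives uniqueness among solutions of the spray equation, not among all geodesics.
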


\begin{proof}
  Since $S$ is complete, the second-order dynamical
  system $(M, S, \id_{TM})$ has a geodesic flow
  $\Phi \colon \R \times TM \to M$ satisfying
  $\Phi^{(2)} = S \circ \Phi^{(1)}$.

  For the given $v_0 \in TM$, let $\iota_{v_0} \colon \R \to \R \times TM$
  be the slice map $t \mapsto (t, v_0)$, and define
  $\gamma := \Phi \circ \iota_{v_0} \colon \R \to M$.
  By functoriality of $T$,
  $\gamma^{(1)} = T(\Phi) \circ T(\iota_{v_0}) \circ \partial$.
  Decomposing $\iota_{v_0} = \langle \id_\R, c_{v_0} \rangle$,
  the product-preservation of $T$ and the fact that
  $T(c_{v_0}) \circ \partial = 0_{TM} \circ c_{v_0}$ give
  \begin{align*}
    T(\iota_{v_0}) \circ \partial
      = \langle \partial,\; 0_{TM} \circ c_{v_0} \rangle
      = (\partial \times 0_{TM}) \circ \iota_{v_0}.
  \end{align*}
  Hence $\gamma^{(1)} = \Phi^{(1)} \circ \iota_{v_0}$,
  and an analogous argument gives $\gamma^{(2)} = \Phi^{(2)} \circ \iota_{v_0}$.
  Composing $\Phi^{(2)} = S \circ \Phi^{(1)}$ on the right with $\iota_{v_0}$
  yields $\gamma^{(2)} = S \circ \gamma^{(1)}$, i.e.,
  $T(\dot{\gamma}) \circ \partial = S \circ \dot{\gamma}$.
  Composing on the left with $K$ and applying
  Proposition~\ref{prop:acceleration-zero} (equivalently
  $K \circ H = 0_M \circ \pi_M \circ \pr_1$),
  the right-hand side becomes
  $0_M \circ \pi_M \circ \dot{\gamma} = 0_M \circ \gamma$,
  which is the geodesic condition $\nabla^K_\partial \dot{\gamma} = 0_M \circ \gamma$.

  Since $M \in \mathsf{Elast}_{\mathrm{curve}}$, i.e.\ $\R$ is a curve object for
  $TM$ and uniqueness of $\Phi$, $\gamma$ with $\dot{\gamma}(0) = v_0$ is uniquely determined.
\end{proof}

\subsection{The Energy Functional and the First Variation Formula}%
\label{subsec:energy}

Let $(M, g)$ be an elastic Riemannian diffeological space.
We define the \textbf{energy functional}
$E \colon \Path(M; x, y) \to \R$ by
\begin{align}\label{eq:energy}
  E(\gamma)
    = \frac{1}{2}\int_0^1 g([\gamma, t, \partial], [\gamma, t, \partial]) \, dt
    = \frac{1}{2}\int_0^1 g(\gamma)(t)(\partial, \partial) \, dt,
\end{align}
in the notation of Notation~\ref{not:g-along-plot}; the value $E(\gamma)$ is
finite since $[0,1]$ is compact and the integrand is smooth in $t$.

\begin{defn}[Variation plot]\label{def:variation-plot}
  Let $\gamma \in \Path(M; x, y)$.
  A $1$-plot $P \colon (-\varepsilon, \varepsilon) \to \Path(M; x, y)$
  is called a \textbf{variation plot} of $\gamma$ if it satisfies
  $P(0) = \gamma$, and $P(s)(0) = x$ and $P(s)(1) = y$ for all
  $s \in (-\varepsilon, \varepsilon)$.
\end{defn}

\begin{notation}\label{not:adjoint}
  Since $\mathsf{Diff}$ is Cartesian closed, a $1$-plot
  $P \colon (-\varepsilon, \varepsilon) \to \Path(M; x, y)$ corresponds
  bijectively to a smooth map
  $\ad(P) \colon (-\varepsilon, \varepsilon) \times \R \to M$ with
  $\ad(P)(s, t) = P(s)(t)$; we call $\ad(P)$ the \textbf{adjoint} of $P$ and
  use the correspondence in both directions.  In particular $\ad(P)$ is a
  $2$-plot of $M$.
\end{notation}

\begin{notation}
    Let 
\end{notation}

\begin{defn}[Critical path]\label{def:critical-path}
  A path $\gamma \in \Path(M; x, y)$ is called a \textbf{critical path}
  if, for every variation plot $P$ of $\gamma$,
  \begin{align*}
    \left. \frac{d(E \circ P)}{ds} \right|_{s = 0} = 0.
  \end{align*}
\end{defn}

We first record a form of metric compatibility for covariant derivatives along
maps (Definition~\ref{def:covariant-derivative-along}), which is what the
variation computation actually requires: the fields $V_s, V_t$ below are
vector fields along $\ad(P)$, not vector fields on $M$.

\begin{prop}[Metric compatibility along maps]\label{prop:metric-compat-along}
  Let $(M, g)$ be an elastic Riemannian diffeological space with Levi-Civita
  connection $K$, let $f \colon N \to M$ be a smooth map of elastic
  diffeological spaces, and let $V, W$ be vector fields along $f$
  (Definition~\ref{def:vf-along-map}). Then for every $X \in \mathfrak{X}(N)$,
  \begin{align*}
    X\bigl(g(V, W)\bigr)
      = g\bigl(\nabla^K_X V,\, W\bigr) + g\bigl(V,\, \nabla^K_X W\bigr),
  \end{align*}
  where $g(V, W) \colon N \to \R$ is $p \mapsto g(V(p), W(p))$ and
  $\nabla^K_X V := K \circ T(V) \circ X$.
\end{prop}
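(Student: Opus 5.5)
The plan is to repeat the proof of Proposition~\ref{prop:metric-compatibility-vf} almost word for word. The only change is that the vector fields $Y, Z$ on $M$ become vector fields $V, W$ along $f$, and the base $M$ of the directional vector field becomes $N$. Since $\pi_M \circ V = \pi_M \circ W = f$, the pair $\langle V, W\rangle \colon N \to TM \times_M TM$ is a well-defined smooth map. Under the identification $T(TM \times_M TM) \cong T^2M \times_{T(\pi_M)} T^2M$ we have
\[
T(\langle V, W\rangle) \circ X = (T(V)\circ X,\ T(W)\circ X),
\]
because $T$ preserves the fibre products $T_k$ pointwise. I will precompose both sides of the metric compatibility identity \eqref{eq:metric-compatibility} with this map $N \to T(TM\times_M TM)$.

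For the left-hand side, functoriality of $T$ gives
\[
d(g) \circ T(\langle V,W\rangle)\circ X = \pr_2 \circ T\bigl(g\circ\langle V,W\rangle\bigr)\circ X .
\]
By Definition~\ref{def:vf-action}, applied on $N$ to the smooth function $g(V,W) = g\circ\langle V,W\rangle \colon N \to \R$, this is exactly $X\bigl(g(V,W)\bigr)$. Proposition~\ref{prop:pr2-Tf} is not even needed here, since the action is defined via $\pr_2 \circ T(-)$.

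For the right-hand side, the two summands become
\[
g\bigl(K\circ T(V)\circ X,\ \pi_{TM}\circ T(W)\circ X\bigr) \quad\text{and}\quad g\bigl(\pi_{TM}\circ T(V)\circ X,\ K\circ T(W)\circ X\bigr).
\]
By naturality of $\pi$ we have $\pi_{TM}\circ T(W)\circ X = W\circ \pi_N\circ X = W$, and likewise for $V$. Hence the right-hand side equals $g(\nabla^K_X V, W) + g(V, \nabla^K_X W)$, as claimed.

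The only step needing care is checking that the composites land in the correct fibre products. First, $T(V)\circ X$ and $T(W)\circ X$ have the same image under $T(\pi_M)$, namely $T(f)\circ X$, so the pair lies in $T^2M\times_{T(\pi_M)}T^2M$. Second, the pairs fed to $g$ lie over a common point of $M$: the base point of $\nabla^K_X V$ is $\pi_M\circ K\circ T(V)\circ X = \pi_M\circ\pi_{TM}\circ T(V)\circ X = f$ by axiom~(b), and the base point of $W$ is also $f$. With these checks in place, the rest is formal, exactly as in Proposition~\ref{prop:metric-compatibility-vf}.
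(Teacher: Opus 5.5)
Your proposal is correct and follows the paper's proof essentially verbatim: you precompose the metric compatibility identity \eqref{eq:metric-compatibility} with $T(\langle V, W\rangle)\circ X$, and you use naturality of $\pi$ to get $\pi_{TM}\circ T(V)\circ X = V$ and likewise for $W$. Your side remark is also right: Proposition~\ref{prop:pr2-Tf} is dispensable, since both $d(g)$ and the action $X\cdot(-)$ are defined as $\pr_2\circ T(-)$, so functoriality of $T$ already gives $d(g)\circ T(\langle V,W\rangle)\circ X = X\bigl(g(V,W)\bigr)$.
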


\begin{proof}
  Since $\pi_M \circ V = f = \pi_M \circ W$, the pair $\langle V, W \rangle$ is
  a smooth map $N \to T_2M = TM \times_M TM$, and $g(V, W) = g \circ \langle V,
  W \rangle$. As $T$ preserves the fibre product $T_2M$, the pair $(T(V) \circ
  X, T(W) \circ X)$ equals $T(\langle V, W \rangle) \circ X$, a map
  $N \to T(T_2M)$. Composing the metric compatibility identity
  \eqref{eq:metric-compatibility} with $T(\langle V, W\rangle) \circ X$ and
  arguing exactly as in Proposition~\ref{prop:metric-compatibility-vf} --- using
  $\pi_{TM} \circ T(V) \circ X = V \circ \pi_N \circ X = V$ and likewise for $W$
  --- gives
  \begin{align*}
    d\bigl(g(V,W)\bigr) \circ X
      = g(K \circ T(V) \circ X,\, W) + g(V,\, K \circ T(W) \circ X),
  \end{align*}
  which is the assertion, since $d(g(V,W)) \circ X = X(g(V,W))$
  (Proposition~\ref{prop:pr2-Tf}, Definition~\ref{def:vf-action}).
\end{proof}

The proof of the first variation formula uses the following symmetry lemma.

\begin{lem}\label{lem:symmetry}
  Let $K$ be a torsion-free vertical connection on $M$.
  For any plot $P \colon U_P \subseteq \R^2 \to M$,
  \begin{align*}
    \nabla^{K}_{\partial_s}[P, (s, t), \partial_t]
      = \nabla^{K}_{\partial_t}[P, (s, t), \partial_s].
  \end{align*}
\end{lem}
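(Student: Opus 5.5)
The plan is to write both sides as $K$ applied to the second tangent map of $P$ evaluated on the pair of coordinate fields, and then use the flip. Regard $V_t := [P,(s,t),\partial_t] = T(P)\circ \partial_t$ and $V_s := T(P)\circ\partial_s$ as vector fields along $P$ on $U_P$; here $\partial_s,\partial_t$ are the coordinate vector fields on $U_P\subseteq\R^2$. By Definition~\ref{def:covariant-derivative-along},
\begin{align*}
  \nabla^K_{\partial_s} V_t = K\circ T(T(P)\circ\partial_t)\circ\partial_s
   = K\circ T^2(P)\circ T(\partial_t)\circ\partial_s,
\end{align*}
and symmetrically $\nabla^K_{\partial_t}V_s = K\circ T^2(P)\circ T(\partial_s)\circ\partial_t$, using only functoriality of $T$.

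The key step is to relate $T(\partial_t)\circ\partial_s$ and $T(\partial_s)\circ\partial_t$ on the domain $U_P$. Since $U_P$ is an open subset of $\R^2$, we have $T^2U_P = \hat{T}^2U_P$ by Remark~\ref{rem:cocone-TP}. In the coordinates $(u,v_0,v_1,v_{01})$ of Example~\ref{ex:euclidean-vertical-connection}, and because $\partial_t$ and $\partial_s$ are constant fields, these maps are
\begin{align*}
  T(\partial_t)\circ\partial_s\,(u) = (u, e_t, e_s, 0),
  \qquad
  T(\partial_s)\circ\partial_t\,(u) = (u, e_s, e_t, 0).
\end{align*}
Hence $T(\partial_t)\circ\partial_s = \tau_{U_P}\circ T(\partial_s)\circ\partial_t$, since the Euclidean flip is $\hat\tau$. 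Equivalently, this is the vanishing of the bracket $[\partial_s,\partial_t]=0$ expressed through $\ell(\partial_s,\partial_t)$ in Definition~\ref{def:lie-bracket}: the bracket is zero and the vertical lift of zero is zero, so the two composites agree up to $\tau$ with no correction term.

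Then naturality of the flip gives $T^2(P)\circ\tau_{U_P} = \tau_M\circ T^2(P)$, and torsion-freeness $K\circ\tau_M = K$ finishes the argument:
\begin{align*}
  \nabla^K_{\partial_s}V_t = K\circ T^2(P)\circ\tau_{U_P}\circ T(\partial_s)\circ\partial_t
  = K\circ\tau_M\circ T^2(P)\circ T(\partial_s)\circ\partial_t = \nabla^K_{\partial_t}V_s .
\end{align*}

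The main point to verify is that the flip $\tau$ of (E2) is genuinely natural in $\mathsf{Elast}$, including along plots from Euclidean domains, and restricts to $\hat\tau$ on $\hat{T}^2U_P$. Both facts hold because Theorem~\ref{thm:tangent-structure-2} makes $\tau$ a natural transformation of the tangent structure, and (E2) characterizes it by $\tau\circ\theta^2 = \theta^2\circ\mathbb{L}\hat\tau$, where $\theta^2$ is the identity on representables. Beyond this, the remaining work is routine coordinate checking.
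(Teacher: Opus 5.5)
Your proposal is correct and follows the paper's proof essentially step for step. Like the paper, you write both sides as $K\circ T^2(P)\circ T(\partial_t)\circ\partial_s$ and $K\circ T^2(P)\circ T(\partial_s)\circ\partial_t$, relate them by the flip on $U_P$ via the coordinate computation for constant coordinate fields, and conclude with naturality of $\tau$ and torsion-freeness $K\circ\tau_M = K$.
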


\begin{proof}
  By the definition of the covariant derivative along $P$,
  \begin{align*}
    \nabla^{K}_{\partial_s} [P, (s, t), \partial_t]
      = K \circ T^2(P) \circ T(\partial_t) \circ \partial_s,
    \qquad
    \nabla^{K}_{\partial_t}[P, (s, t), \partial_s]
      = K \circ T^2(P) \circ T(\partial_s) \circ \partial_t.
  \end{align*}
  In the coordinates of Example~\ref{ex:euclidean-vertical-connection}, the coordinate fields on $U_P \subseteq \R^2$ have constant components, so that $T(\partial_t)(u, v) = (u, e_2, v, 0)$, and hence
  \begin{align*}
    T(\partial_t) \circ \partial_s(u)
      = (u, e_2, e_1, 0)
      = \tau_{\R^2}(u, e_1, e_2, 0)
      = \tau_{\R^2} \circ T(\partial_s) \circ \partial_t(u),
  \end{align*}
  an instance of the classical relation between the canonical flip and the Lie bracket of vector fields (cf.\ \cite[Lemma~6.13]{KMS}).
  By naturality of $\tau$, $T^2(P) \circ \tau_{\R^2} = \tau_M \circ T^2(P)$, and since $K$ is torsion-free ($K \circ \tau_M = K$),
  \begin{align*}
    \nabla^{K}_{\partial_s}[P, (s, t), \partial_t]
      &= K \circ \tau_M \circ T^2(P) \circ T(\partial_s) \circ \partial_t \\
      &= K \circ T^2(P) \circ T(\partial_s) \circ \partial_t
       = \nabla^{K}_{\partial_t}[P, (s, t), \partial_s]. \qedhere
  \end{align*}
\end{proof}

\begin{lem}[First Variation Formula]\label{lem:first-variation-formula}
  Let $(M, g)$ be an elastic Riemannian diffeological space, and let $K$ be a Levi-Civita connection for $g$. 
  For $\gamma \in \Path(M; x, y)$ and any variation plot $P$ of $\gamma$, consider the vector fields along $\ad(P)$,
  \begin{align*}
    V_s := [\ad(P), (s, t), \partial_s],
    \qquad
    V_t := [\ad(P), (s, t), \partial_t].
  \end{align*}
  Then
  \begin{align*}
      \left. \frac{d(E \circ P)}{ds} \right|_{s = 0}
        = - \int_0^1 g\bigl(V_s,\, \nabla^{K}_{\partial_t} V_t\bigr)
          \Big|_{s=0} \, dt.
  \end{align*}
\end{lem}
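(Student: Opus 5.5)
The plan is to mimic the classical first-variation computation, working entirely with the $2$-plot $\ad(P)$ on $U = (-\varepsilon,\varepsilon)\times\R$.

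\textbf{Step 1: rewrite the energy.} By definition, $E(P(s)) = \tfrac12\int_0^1 g(V_t,V_t)(s,t)\,dt$. Here I use that $[P(s),t,\partial] = [\ad(P),(s,t),\partial_t]$. This follows from Remark~\ref{rem:tangent-structure-2} applied to the plot morphism $t\mapsto(s,t)$.

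\textbf{Step 2: differentiate under the integral.} The function $(s,t)\mapsto g(V_t,V_t)$ equals $g(\ad(P))(s,t)(\partial_t,\partial_t)$, a smooth function on $U$ (Notation~\ref{not:g-along-plot}). Since $[0,1]$ is compact, differentiation under the integral sign is legitimate, and we get
\[
\frac{d(E\circ P)}{ds}\Big|_{s=0}=\frac12\int_0^1 \partial_s\bigl(g(V_t,V_t)\bigr)\Big|_{s=0}\,dt .
\]
The classical derivative $\partial_s$ of this function coincides with the action $\partial_s\cdot g(V_t,V_t)$ of Definition~\ref{def:vf-action}, by Proposition~\ref{prop:pr2-Tf} on the domain $U$.

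\textbf{Step 3: metric compatibility and symmetry.} Applying Proposition~\ref{prop:metric-compat-along} with $f=\ad(P)$, $V=W=V_t$, $X=\partial_s$, and using symmetry of $g$, gives $\partial_s g(V_t,V_t)=2g(\nabla^K_{\partial_s}V_t,V_t)$. Lemma~\ref{lem:symmetry} (the Levi-Civita $K$ is torsion-free) converts this to $2g(\nabla^K_{\partial_t}V_s,V_t)$.

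\textbf{Step 4: integrate by parts in $t$.} Proposition~\ref{prop:metric-compat-along} again, now with $X=\partial_t$, gives
\[
g(\nabla^K_{\partial_t}V_s,V_t)=\partial_t\, g(V_s,V_t)-g(V_s,\nabla^K_{\partial_t}V_t).
\]
Integrating over $[0,1]$ at $s=0$, the first term contributes $g(V_s,V_t)\big|_{t=0}^{t=1}$.

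\textbf{Main obstacle: the boundary terms.} The key point is that $V_s$ vanishes at $t=0$ and $t=1$. Since $\ad(P)(s,0)=x$ for all $s$, the plot $\ad(P)\circ\iota_0$, with $\iota_0\colon s\mapsto(s,0)$, is constant. Naturality (Remark~\ref{rem:tangent-structure-2}) then gives
\[
[\ad(P),(s,0),\partial_s]=[\ad(P)\circ\iota_0,s,1]=0_M(x),
\]
using Definition~\ref{def:nat-trans} for constant plots. The same argument works at $t=1$. By bilinearity, $g(0_M(x),\cdot)=0$, so both boundary terms vanish, and the stated formula follows.

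The remaining care is bookkeeping. One must check that the functions $g(V,W)$ along $\ad(P)$ are smooth on $U$, so that the fundamental theorem of calculus applies. This holds because they are composites of smooth maps.
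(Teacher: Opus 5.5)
Your proposal is correct and follows the paper's proof essentially step for step: differentiate under the integral, apply metric compatibility along $\ad(P)$ with $X=\partial_s$, use Lemma~\ref{lem:symmetry}, apply metric compatibility again with $X=\partial_t$, and discard the boundary terms. Your explicit argument that $V_s$ vanishes at $t=0,1$, via the constant plot $\ad(P)\circ\iota_0$ and naturality, fills in a detail the paper states in a single line.
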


\begin{rem}\label{rem:variation-velocity}
For any variation plot $P$, $P(s)$ is a path on $M$ and hence a plot.
For the inclusion $\iota_s \colon \R \to \R^2$, $t \mapsto (s, t)$,
we have $\ad(P) \circ \iota_s = P(s)$. By the colimit description of the
tangent space (Remark~\ref{rem:tangent-structure-2}),
$\bigl[P(s), t, \tfrac{d}{dt}\bigr] = \bigl[\ad(P), (s,t), \partial_t\bigr]$,
so $V_t|_{s}$ is the velocity field of the path $P(s)$.
\end{rem}

\begin{proof}[Proof of Lemma~\ref{lem:first-variation-formula}]
  By \eqref{eq:energy} and differentiation under the integral sign,
  \begin{align*}
      \frac{d(E \circ P)}{ds}
        = \frac{1}{2} \int_0^1 \frac{\partial}{\partial s} g(V_t, V_t) \, dt.
  \end{align*}
  Since $K$ is a Levi-Civita connection and $V_t$ is a vector field along
  $\ad(P)$, metric compatibility along maps
  (Proposition~\ref{prop:metric-compat-along}, with $f = \ad(P)$ and
  $X = \partial_s$) gives
  $\frac{\partial}{\partial s} g(V_t, V_t) = 2g(\nabla^{K}_{\partial_s} V_t, V_t)$,
  hence
  \begin{align*}
      \frac{d(E \circ P)}{ds}
        = \int_0^1 g(\nabla^{K}_{\partial_s} V_t, V_t) \, dt.
  \end{align*}
  By Lemma~\ref{lem:symmetry},
  $\nabla^{K}_{\partial_s} V_t = \nabla^{K}_{\partial_t} V_s$, so applying
  Proposition~\ref{prop:metric-compat-along} once more (now with
  $X = \partial_t$),
  $g(\nabla^{K}_{\partial_t} V_s, V_t)
  = \frac{\partial}{\partial t} g(V_s, V_t)
  - g(V_s, \nabla^{K}_{\partial_t} V_t)$, we obtain
  \begin{align*}
      \frac{d(E \circ P)}{ds}
        = \left[ g(V_s, V_t) \right]_{t=0}^{t=1}
           - \int_0^1 g\bigl(V_s,\, \nabla^{K}_{\partial_t} V_t\bigr) \, dt.
  \end{align*}
  Since the variation plot is constant at the endpoints,
  $V_s|_{t=0} = V_s|_{t=1} = 0$, so the boundary term vanishes.
  Evaluating at $s = 0$ gives the formula.
\end{proof}

\begin{prop}\label{prop:geodesic-critical-path}
  Let $(M, g)$ be as in Lemma~\ref{lem:first-variation-formula}.
  If $\gamma \in \Path(M; x, y)$ is a geodesic for the Levi-Civita connection
  $K$, then $\gamma$ is a critical path of the energy functional.
\end{prop}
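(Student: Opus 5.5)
The plan is to apply the first variation formula (Lemma~\ref{lem:first-variation-formula}) directly and show that its integrand vanishes identically at $s=0$. Fix a variation plot $P$ of $\gamma$. The formula expresses $\frac{d(E\circ P)}{ds}\big|_{s=0}$ as $-\int_0^1 g(V_s,\nabla^K_{\partial_t}V_t)|_{s=0}\,dt$. It therefore suffices to check, for each $t$, that $(\nabla^K_{\partial_t}V_t)(0,t)$ is the zero vector at $\gamma(t)$. Then bilinearity of $g$ (namely $g(u,0_M(x))=0$) kills the integrand pointwise.

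The key step is to identify the restriction of $\nabla^K_{\partial_t}V_t$ to the slice $s=0$ with $\nabla^K_\partial\dot\gamma$.
\begin{itemize}
\item Let $\iota_0\colon\R\to\R^2$, $t\mapsto(0,t)$, so that $\ad(P)\circ\iota_0=P(0)=\gamma$.
\item By Remark~\ref{rem:variation-velocity}, $V_t\circ\iota_0=\dot\gamma$ as maps $\R\to TM$.
\item Since $\partial_t$ on $\R^2$ is $\iota_0$-related to $\partial$ on $\R$ (that is, $T(\iota_0)\circ\partial=\partial_t\circ\iota_0$, checked in the Euclidean coordinates of Example~\ref{ex:euclidean-vertical-connection}), functoriality of $T$ gives
\[
K\circ T(V_t)\circ\partial_t\circ\iota_0
=K\circ T(V_t)\circ T(\iota_0)\circ\partial
=K\circ T(V_t\circ\iota_0)\circ\partial
=K\circ T(\dot\gamma)\circ\partial .
\]
\item Hence $(\nabla^K_{\partial_t}V_t)\circ\iota_0=\nabla^K_\partial\dot\gamma$, which equals $0_M\circ\gamma$ by the geodesic hypothesis (Definition~\ref{def:geodesic}).
\end{itemize}

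There is one subtlety in this step. $V_t$ is defined on the domain $(-\varepsilon,\varepsilon)\times\R$ of $\ad(P)$, so $V_t\circ\iota_0$ must be read as the class $[\ad(P),(0,t),\partial_t]$. Its identification with $[\gamma,t,\partial]$ is exactly the elementary move $[Q\circ h,w]=[Q,\hat T h(w)]$ of Remark~\ref{rem:tangent-structure-2}, applied to $h=\iota_0$. The same naturality argument applies one level up in $T^2M$, since $T$ is a functor. With the integrand zero for all $t\in[0,1]$, the derivative vanishes for every variation plot, so $\gamma$ is critical in the sense of Definition~\ref{def:critical-path}.

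The main obstacle is justifying the slice restriction: checking that $T(\iota_0)\circ\partial=\partial_t\circ\iota_0$ and that the covariant derivative along a map is natural under precomposition. Both reduce to functoriality of $T$ and Euclidean computations, so I expect no essential difficulty there. No hypothesis of membership in $\mathsf{Elast}_{\mathrm{curve}}$ or completeness is needed, in line with the statement.
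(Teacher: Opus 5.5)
Your proposal is correct and matches the paper's proof: apply the first variation formula and observe that at $s=0$ the field $\nabla^K_{\partial_t}V_t$ restricts to $\nabla^K_\partial\dot\gamma = 0_M\circ\gamma$, so the integrand vanishes. Your check that $T(\iota_0)\circ\partial=\partial_t\circ\iota_0$, with the resulting naturality of the covariant derivative under restriction to the slice, fills in a step the paper compresses into a citation of Remark~\ref{rem:variation-velocity}.
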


\begin{proof}
  For any variation plot $P$ with $P(0) = \gamma$, the field $V_t|_{s=0}$ is the
  velocity $\dot\gamma$ (Remark~\ref{rem:variation-velocity}), so
  $\nabla^{K}_{\partial_t} V_t|_{s=0} = \nabla^K_\partial \dot\gamma
  = 0_M \circ \gamma$ since $\gamma$ is a geodesic. The first variation formula
  (Lemma~\ref{lem:first-variation-formula}) then gives
  $\frac{d(E \circ P)}{ds}\big|_{s=0} = 0$, so $\gamma$ is a critical path.
\end{proof}

Throughout the rest of this subsection we make the following standing
assumptions: $(M, g)$ is an elastic Riemannian diffeological space with
$M \in \mathsf{Elast}_{\mathrm{curve}}$; $(K, H)$ is a full connection on $M$
whose vertical part $K$ is a Levi-Civita connection for $g$; the geodesic
spray $S = H \circ \langle \id_{TM}, \id_{TM} \rangle$ is complete, and
$\Phi \colon \R \times TM \to M$ denotes the geodesic flow of
Theorem~\ref{thm:geodesic-existence}.

\begin{lem}\label{lem:flow-initial}
  For every $v \in TM$,
  \begin{align*}
    \mathrm{(i)}\quad \Phi(0, v) = \pi_M(v),
    \qquad\qquad
    \mathrm{(ii)}\quad \Phi^{(1)}(0, v) = v,
  \end{align*}
  where
  $\Phi^{(1)} = T(\Phi) \circ (\partial \times 0_{TM}) \colon \R \times TM \to TM$
  (Definition~\ref{def:nth-derivative}). In particular, for each $v$ the curve
  $s \mapsto \Phi(s, v)$ is the geodesic with base point $\pi_M(v)$ and initial
  velocity $v$.
\end{lem}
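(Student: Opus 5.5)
The plan is to read both identities directly off the initial condition in Definition~\ref{def:nth-solution}, and then to obtain the geodesic statement by restricting to a slice, exactly as in the proof of Theorem~\ref{thm:geodesic-existence}. Since $\Phi$ solves the second-order system $(M, S, \id_{TM})$, the initial condition reads
\begin{align*}
  \Phi^{(1)} \circ \langle 0, \id_{TM} \rangle = \id_{TM},
\end{align*}
where $0$ is the constant map $TM \to \R$ as in Remark~\ref{rem:point-as-constant-map}. Evaluating at $v$ gives (ii), namely $\Phi^{(1)}(0,v) = v$.

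For (i), I will use Proposition~\ref{prop:higher-dynamical-system}, which gives $\pi_M \circ \Phi^{(1)} = \Phi$. Alternatively, this follows from naturality of $\pi$:
\begin{align*}
  \pi_M \circ T(\Phi) \circ (\partial \times 0_{TM}) = \Phi \circ \pi_{\R \times TM} \circ (\partial \times 0_{TM}) = \Phi,
\end{align*}
because $\partial \times 0_{TM}$ is a section of $\pi_{\R\times TM}$, using product preservation of $T$. Composing with $\langle 0, \id_{TM}\rangle$ then gives $\Phi(0,v) = \pi_M(\Phi^{(1)}(0,v)) = \pi_M(v)$.

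For the final assertion, set $\gamma_v := \Phi \circ \iota_v$ with $\iota_v(s) = (s,v)$. The proof of Theorem~\ref{thm:geodesic-existence} already shows three things: $\gamma_v^{(1)} = \Phi^{(1)} \circ \iota_v$, $\gamma_v^{(2)} = S \circ \gamma_v^{(1)}$, and hence $\nabla^K_\partial \dot\gamma_v = 0_M \circ \gamma_v$. Here $\dot\gamma_v = \gamma_v^{(1)}$ by the definition of the velocity field and Remark~\ref{rem:first-derivative}. The initial data then follow from (i) and (ii): $\gamma_v(0) = \pi_M(v)$ and $\dot\gamma_v(0) = \Phi^{(1)}(0,v) = v$. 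The uniqueness part of Theorem~\ref{thm:geodesic-existence}, which holds since $M \in \mathsf{Elast}_{\mathrm{curve}}$, identifies $\gamma_v$ with \emph{the} geodesic of initial velocity $v$.

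The only step requiring care is the identity $\gamma_v^{(1)} = \Phi^{(1)}\circ\iota_v$. This is the computation $T(\iota_v)\circ\partial = (\partial\times 0_{TM})\circ\iota_v$, which uses that $T$ preserves products and that the tangent of a constant map factors through the zero section. It was already carried out in Theorem~\ref{thm:geodesic-existence}, so I will simply cite it. Everything else is unwinding definitions.
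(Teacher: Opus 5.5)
Your proposal is correct and follows the paper's proof: you read (ii) directly off the initial condition of Definition~\ref{def:nth-solution}, and you obtain (i) from $\pi_M \circ \Phi^{(1)} = \Phi$ (via naturality of $\pi$ and product preservation), evaluated at $(0,v)$. Your justification of the final clause, by slicing as in Theorem~\ref{thm:geodesic-existence} and invoking its uniqueness part, is sound and makes explicit what the paper leaves implicit.
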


\begin{proof}
  The initial condition of the second-order dynamical system
  $(M, S, \id_{TM})$ (Definition~\ref{def:nth-solution}) is exactly
  $\Phi^{(1)}(0, v) = v$, which is (ii). For (i),
  naturality of $\pi$ gives
  $\pi_M \circ T(\Phi) = \Phi \circ \pi_{\R \times TM}$, and since $T$ preserves
  products (Proposition~\ref{prop:tangent-structure-1}),
  $\pi_{\R \times TM} = \pi_\R \times \pi_{TM}$. Hence
  \begin{align*}
    \pi_M \circ \Phi^{(1)}
      = \Phi \circ (\pi_\R \times \pi_{TM}) \circ (\partial \times 0_{TM})
      = \Phi,
  \end{align*}
  using $\pi_\R \circ \partial = \id_\R$ and
  $\pi_{TM} \circ 0_{TM} = \id_{TM}$. Evaluating at $(0, v)$ and
  substituting (ii) gives $\Phi(0, v) = \pi_M(\Phi^{(1)}(0, v)) = \pi_M(v)$.
\end{proof}

\begin{lem}[Realization of variation fields]\label{lem:realization}
  Let $\gamma \in \Path(M; x, y)$, and let $W \colon \R \to TM$ be a smooth
  vector field along $\gamma$ (so $\pi_M \circ W = \gamma$) with
  $W(0) = 0_M(x)$ and $W(1) = 0_M(y)$. Then
  \begin{align*}
    \ad(P_W)(s, t) := \Phi\bigl(s, W(t)\bigr)
  \end{align*}
  defines a variation plot $P_W$ of $\gamma$
  (Definition~\ref{def:variation-plot}), and its variation field in the
  $s$-direction satisfies
  \begin{align*}
    \left. V_s \right|_{s=0} = [\ad(P_W), (0, t), \partial_s] = W.
  \end{align*}
\end{lem}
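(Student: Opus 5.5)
The plan is to verify the claims in three steps: the variation-plot properties, the endpoint conditions, and the identification of the $s$-derivative with $W$.

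\emph{Smoothness and the variation-plot properties.} The map $(s,t)\mapsto (s,W(t))$, $\R\times\R\to\R\times TM$, is smooth, and $\Phi$ is smooth, so $\ad(P_W)=\Phi\circ(\id_\R\times W)$ is a smooth map $\R^2\to M$. Cartesian closedness (Notation~\ref{not:adjoint}) then gives a smooth $P_W\colon\R\to\Path(M)$; restricting to $(-\varepsilon,\varepsilon)$ yields a $1$-plot. By Lemma~\ref{lem:flow-initial}(i),
\[
\ad(P_W)(0,t)=\Phi(0,W(t))=\pi_M(W(t))=\gamma(t),
\]
so $P_W(0)=\gamma$.

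\emph{Endpoint conditions.} We need $\Phi(s,0_M(x))=x$ for all $s$. The constant path $c_x$ is a geodesic with initial velocity $0_M(x)$. Indeed, $\dot c_x=0_M\circ c_x$, and $T(0_M\circ c_x)\circ\partial=T(0_M)\circ 0_M\circ c_x$. Applying $K$ and using additivity of $K$ (Remark~\ref{rem:K-linear-bundle}, $K$ preserving zeros for both bundle structures) gives zero.

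By the uniqueness in Theorem~\ref{thm:geodesic-existence}, valid because $M\in\mathsf{Elast}_{\mathrm{curve}}$, together with Lemma~\ref{lem:flow-initial}, $s\mapsto\Phi(s,0_M(x))$ is this geodesic. Hence it equals $c_x$, and likewise at $y$. Thus $P_W(s)\in\Path(M;x,y)$ for every $s$.

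To apply uniqueness we must rephrase "geodesic with initial velocity $v$" as a solution of the second-order system $(M,S,v)$. This requires $T(\dot c_x)\circ\partial=S\circ\dot c_x$. Here the full connection decomposition (Definition~\ref{def:full-connection}(b)) applies: an element of $T^2M$ is determined by its two projections and its $K$-value (Proposition~\ref{prop:full-connection-fibre-product}). Both sides have projections $\dot c_x,\dot c_x$ and $K$-value zero. I expect this to be the main bookkeeping obstacle, but it is routine.

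\emph{The variation field.} Write $\ad(P_W)=\Phi\circ(\id\times W)$. Functoriality of $T$ and product preservation give
\[
[\ad(P_W),(s,t),\partial_s]=T(\Phi)\bigl(\partial(s),0_{TM}(W(t))\bigr)=\Phi^{(1)}(s,W(t)).
\]
This holds because the $\partial_s$ direction maps under $T(\id\times W)$ to $(\partial,0)$, exactly as in the slice computation of Theorem~\ref{thm:geodesic-existence}. At $s=0$, Lemma~\ref{lem:flow-initial}(ii) gives $\Phi^{(1)}(0,W(t))=W(t)$, so $V_s|_{s=0}=W$.
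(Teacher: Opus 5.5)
Your proposal is correct and follows the paper's proof step for step. The steps are: smoothness via cartesian closedness; $P_W(0)=\gamma$ from Lemma~\ref{lem:flow-initial}(i); the endpoint condition from the constant geodesic together with the uniqueness in Theorem~\ref{thm:geodesic-existence}; and $V_s|_{s=0}=\Phi^{(1)}(0,W(t))=W(t)$ via the slice computation and Lemma~\ref{lem:flow-initial}(ii). Your extra check that $T(\dot c_x)\circ\partial = S\circ\dot c_x$, via the isomorphism of Proposition~\ref{prop:full-connection-fibre-product}, makes explicit a step the paper passes over: the uniqueness clause controls solutions of the spray system, so one needs that a path with $\nabla^K_\partial\dot\gamma=0$ is such a solution.
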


\begin{proof}
  The map $(s, t) \mapsto \Phi(s, W(t))$ is smooth, being a composite of the
  smooth maps $\Phi$ and $W$; by cartesian closedness of $\mathsf{Diff}$ it
  corresponds to a $1$-plot $P_W$ into the path space. By
  Lemma~\ref{lem:flow-initial}\,(i),
  $\ad(P_W)(0, t) = \Phi(0, W(t)) = \pi_M(W(t)) = \gamma(t)$, so $P_W(0) = \gamma$.
  Moreover the geodesic with initial velocity $0_M(x)$ is the constant path at
  $x$ (indeed $\dot\gamma \equiv 0$ satisfies
  $\nabla^{K}_\partial \dot\gamma = 0$ and $\dot\gamma(0) = 0_M(x)$, and by the
  uniqueness part of Theorem~\ref{thm:geodesic-existence} it is the only
  such geodesic); hence $\ad(P_W)(s, 0) = \Phi(s, 0_M(x)) = x$ for all $s$, and
  likewise $\ad(P_W)(s, 1) = y$. Thus $P_W$ is a variation plot in the sense of
  Definition~\ref{def:variation-plot}.

  Finally, for each fixed $t$ the curve
  $s \mapsto \ad(P_W)(s, t) = \Phi(s, W(t))$ is the geodesic with initial
  velocity $W(t)$ (Lemma~\ref{lem:flow-initial}), so, writing
  $\iota_t \colon \R \to \R^2$, $s \mapsto (s, t)$, for the slice map in the
  $s$-direction, Lemma~\ref{lem:flow-initial}\,(ii) gives
  \begin{align*}
   [\ad(P_W),(0,t),\partial_s]
     &= T(\ad(P_W))\circ T(\iota_t)\circ\partial\big|_0
      = T(\Phi)\circ(\partial\times 0_{TM})\,(0,\,W(t)) \\
     &= \Phi^{(1)}(0,\,W(t))
      = W(t).
  \end{align*}
  That is, $V_s|_{s=0} = W$.
\end{proof}

\begin{prop}[Converse of Proposition~\ref{prop:geodesic-critical-path}]%
\label{prop:critical-geodesic}
  If $\gamma \in \Path(M; x, y)$ is a critical path of the energy functional
  (Definition~\ref{def:critical-path}), then $\gamma$ is a geodesic on $M$;
  that is, $\nabla^{K}_\partial \dot\gamma = 0_M \circ \gamma$.
\end{prop}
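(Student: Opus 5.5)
The plan is the classical one: choose a variation whose variation field is $f\cdot A$, where $A := \nabla^K_\partial\dot\gamma$ is the acceleration and $f$ is a bump function vanishing at the endpoints. Then read off $A = 0$ from the first variation formula and definiteness of $g$.

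\emph{Step 1: construct the variation field.} The acceleration $A = K \circ T(\dot\gamma)\circ\partial \colon \R \to TM$ is a smooth vector field along $\gamma$. Indeed, by axiom~(b) of Definition~\ref{def:vertical-connection},
\[
\pi_M\circ A = \pi_M\circ\pi_{TM}\circ T(\dot\gamma)\circ\partial = \pi_M\circ\dot\gamma = \gamma .
\]
Fix a smooth $f\colon\R\to\R$ with $f(0)=f(1)=0$ and $f>0$ on $(0,1)$, for instance $f(t)=t(1-t)$. Set $W := \kappa_M\circ\langle f, A\rangle$. This is a smooth vector field along $\gamma$, and $W(0)=0_M(x)$, $W(1)=0_M(y)$, since $\kappa_M(0,v)=0_M(\pi_M v)$.

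\emph{Step 2: apply the first variation formula.} By Lemma~\ref{lem:realization}, $W$ is realized by the variation plot $P_W$, with $\ad(P_W)(s,t)=\Phi(s,W(t))$ and $V_s|_{s=0}=W$. By Remark~\ref{rem:variation-velocity}, $V_t|_{s=0}=\dot\gamma$, so $\nabla^K_{\partial_t}V_t|_{s=0}=A$. One point needs checking here: the restriction along $s=0$ of $K\circ T(V_t)\circ\partial_t$ agrees with $K\circ T(\dot\gamma)\circ\partial$. This follows from functoriality of $T$ applied to the slice inclusion $\iota_0$, together with $T(\iota_0)\circ\partial = \partial_t\circ\iota_0$. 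Since $\gamma$ is critical, Lemma~\ref{lem:first-variation-formula} gives
\[
0 = -\int_0^1 g(W,A)\,dt = -\int_0^1 f(t)\,g(A(t),A(t))\,dt ,
\]
using bilinearity of $g$ in the first slot.

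\emph{Step 3: conclude.} The integrand $f(t)\,g(A(t),A(t))$ is continuous, and it is nonnegative by positivity. It therefore vanishes on $[0,1]$, so $g(A(t),A(t))=0$ for $t\in(0,1)$, and by continuity also at $0$ and $1$. Definiteness then gives $A(t)=0_M(\gamma(t))$ for $t\in[0,1]$.

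The main obstacle is that a geodesic is defined on all of $\R$, while the criticality condition only controls $[0,1]$. I would handle this with the uniqueness part of Theorem~\ref{thm:geodesic-existence}. Let $\sigma(t) := \Phi(t, \dot\gamma(0))$ be the geodesic with initial velocity $\dot\gamma(0)$. On $[0,1]$ both $\gamma$ and $\sigma$ solve the same second-order equation $\gamma^{(2)} = S\circ\gamma^{(1)}$; for $\gamma$ this holds because $A=0$ there and $(K,\langle T\pi_M,\pi_{TM}\rangle)$ identifies $T^2M$ via Proposition~\ref{prop:full-connection-fibre-product}.

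Preinitiality, however, is only stated for solutions defined on all of $\R$, so concluding $\gamma=\sigma$ on $\R$ from agreement of the equation on $[0,1]$ is not immediate. If this gap cannot be closed with the available hypotheses, the fallback is to interpret the statement as $\nabla^K_\partial\dot\gamma = 0_M\circ\gamma$ on $[0,1]$. This is the natural reading, since $\Path(M;x,y)$ and $E$ only see $[0,1]$, and it is exactly what Steps 1–3 establish.
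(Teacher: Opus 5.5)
Your argument is essentially the paper's: you realize the test field via Lemma~\ref{lem:realization}, apply the first variation formula (Lemma~\ref{lem:first-variation-formula}) together with criticality, and finish with a fundamental-lemma argument using the test field $\varphi\cdot\nabla^K_\partial\dot\gamma$, where you take $\varphi(t)=t(1-t)$ globally instead of the paper's local bump function and contradiction; your extra checks (that the acceleration lies over $\gamma$, and the restriction to $s=0$ via $\iota_0$) are correct. Your worry about $\R$ versus $[0,1]$ is justified, but uniqueness (Theorem~\ref{thm:geodesic-existence}) cannot resolve it, because the paper's own proof also only shows vanishing on $[0,1]$ and the literal statement on all of $\R$ is false: in Euclidean $\R^2$, take $\gamma(t)=(t,\phi(t))$ with $\phi$ smooth, identically zero on $[0,1]$ and not affine on $(1,\infty)$; this $\gamma$ is critical but not a geodesic on $\R$, so drop the extension attempt and state and prove the $[0,1]$ version directly.
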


\begin{proof}
  Set $Z := \nabla^{K}_\partial \dot\gamma = K \circ T(\dot\gamma) \circ \partial$,
  a vector field along $\gamma$ (Definition~\ref{def:covariant-derivative-along},
  Remark~\ref{rem:velocity-along}). It suffices to show $Z = 0_M \circ \gamma$.

  Let $W$ be any vector field along $\gamma$ vanishing at the endpoints. Taking
  the variation plot $P_W$ of Lemma~\ref{lem:realization}, we have
  $V_s|_{s=0} = W$; and since $P_W(0) = \gamma$,
  $V_t|_{s=0} = \dot\gamma$ (Remark~\ref{rem:variation-velocity}), whence
  $\nabla^{K}_{\partial_t} V_t|_{s=0} = \nabla^{K}_\partial \dot\gamma = Z$.
  The first variation formula (Lemma~\ref{lem:first-variation-formula}) and
  criticality of $\gamma$ give
  \begin{align}\label{eq:converse-star}
    \int_0^1 g(W, Z)\, dt
      = - \left. \frac{d(E \circ P_W)}{ds} \right|_{s=0}
      = 0
    \qquad \text{for every such } W.
  \end{align}
  The conclusion $Z \equiv 0$ now follows from the fundamental lemma of the
  calculus of variations, whose proof is identical to the manifold case once
  the test field is produced within the elastic setting: if $Z(t_0) \ne 0$ for
  some $t_0 \in (0,1)$, then $h := g(Z, Z)$ satisfies $h(t_0) > 0$ by
  definiteness, so $h > 0$ on an interval $(a,b) \ni t_0$; choosing
  $\varphi \in C^\infty(\R,\R)$ with $\operatorname{supp}\varphi \subset (a,b)$,
  $\varphi \ge 0$, $\varphi(t_0) > 0$ and setting $W := \varphi Z := \kappa_M
  \circ \langle \varphi, Z\rangle$ yields a vector field along $\gamma$
  vanishing at the endpoints (as $\varphi(0) = \varphi(1) = 0$), for which
  \eqref{eq:converse-star} and bilinearity give
  $0 = \int_0^1 \varphi\, h \, dt > 0$, a contradiction. Hence $Z(t) = 0_M(\gamma(t))$
  for all $t$, i.e.\ $\gamma$ is a geodesic.
\end{proof}

\subsection{Length-Minimizing Paths}\label{subsec:length}

\begin{defn}\label{def:length}
  A path $\gamma \in \Path(M; x, y)$ is \textbf{regular} if
  $g(\gamma)(t)(\partial, \partial) > 0$ for all $t$, and then its
  \textbf{length} is
  \begin{align*}
    L(\gamma)
      = \int_0^1 \bigl( g(\gamma)(t)(\partial, \partial) \bigr)^{1/2}\, dt,
  \end{align*}
  as in \cite{KSS}, in the notation of Notation~\ref{not:g-along-plot}.
\end{defn}

\begin{lem}\label{lem:energy-length}
  For every $\gamma \in \Path(M; x, y)$,
  \begin{align*}
    L(\gamma)^2 \le 2\, E(\gamma),
  \end{align*}
  with equality if and only if
  $t \mapsto g(\gamma)(t)(\partial, \partial)$ is constant.
\end{lem}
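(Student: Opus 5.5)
This is the classical Cauchy--Schwarz argument on $[0,1]$, applied to the smooth scalar function $f(t) := \bigl(g(\gamma)(t)(\partial,\partial)\bigr)^{1/2}$.

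First I check that the integrands are well defined. By positivity in Definition~\ref{def:metric}, $h(t) := g(\gamma)(t)(\partial,\partial) = g([\gamma,t,\partial],[\gamma,t,\partial]) \ge 0$. The map $t \mapsto h(t)$ is smooth: it is the composite of $g$ with the smooth map $\langle \dot\gamma, \dot\gamma\rangle \colon \R \to TM \times_M TM$. Hence $f = h^{1/2}$ is continuous on $[0,1]$. It need not be smooth where $h$ vanishes, but continuity suffices for Riemann integrability. I will read $L(\gamma) = \int_0^1 f\,dt$ for arbitrary $\gamma$, not only regular ones. By \eqref{eq:energy}, $2E(\gamma) = \int_0^1 h\,dt = \int_0^1 f^2\,dt$.

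Next I apply the Cauchy--Schwarz inequality in $L^2([0,1])$ to $f$ and the constant function $1$:
\[
  L(\gamma)^2 = \Bigl(\int_0^1 f \cdot 1 \, dt\Bigr)^2 \le \int_0^1 f^2\,dt \cdot \int_0^1 1\,dt = 2E(\gamma).
\]
Equivalently, writing $c := \int_0^1 f\,dt$, one has $0 \le \int_0^1 (f - c)^2\,dt = \int_0^1 f^2\,dt - c^2$. This form handles the equality case cleanly.

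For the equality case, suppose $L(\gamma)^2 = 2E(\gamma)$. Then $\int_0^1 (f-c)^2\,dt = 0$ with a continuous, nonnegative integrand, so $f \equiv c$ on $[0,1]$, and hence $h = f^2$ is constant there. Conversely, if $h$ is constant, then $f$ is constant, and both sides equal $h$.

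The only step needing care is the equality clause. "Constant" should be read on $[0,1]$, the interval over which both functionals integrate. The vanishing of a continuous nonnegative function with zero integral is standard. No diffeological input is needed beyond the smoothness, hence continuity, of $h$, which I expect to be the only mild obstacle.
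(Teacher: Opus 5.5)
Your proposal is correct and follows the paper's proof: Cauchy--Schwarz on $[0,1]$ is applied to $f = (g(\gamma)(t)(\partial,\partial))^{1/2}$ and the constant function $1$, with equality exactly when $f$, hence $f^2$, is constant. Your identity $2E(\gamma) - L(\gamma)^2 = \int_0^1 (f - L(\gamma))^2\,dt$ makes the equality case explicit, and your two readings match the conventions the paper's argument uses implicitly: $L(\gamma) = \int_0^1 f\,dt$ for non-regular $\gamma$, and constancy on $[0,1]$ only.
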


\begin{proof}
  Write $f(t) := (g(\gamma)(t)(\partial, \partial))^{1/2} \ge 0$.
  By the Cauchy--Schwarz inequality,
  \begin{align*}
    L(\gamma) = \int_0^1 f \cdot 1 \, dt
      \le \Bigl( \int_0^1 f^2 \, dt \Bigr)^{1/2}
          \Bigl( \int_0^1 1 \, dt \Bigr)^{1/2}
      = \bigl( 2 E(\gamma) \bigr)^{1/2}.
  \end{align*}
  Squaring gives the inequality; equality holds if and only if $f$ is
  proportional to the constant function $1$, i.e.\
  $f^2 = g(\gamma)(t)(\partial, \partial)$ is constant.
\end{proof}

\begin{lem}\label{lem:reparam}
  Let $\gamma \in \Path(M; x, y)$ be regular. Then there is a diffeomorphism
  $\psi \colon [0,1] \to [0,1]$ with $\psi(0) = 0$, $\psi(1) = 1$ such that
  $\widetilde\gamma := \gamma \circ \psi^{-1}$ satisfies
  \begin{align*}
    g(\widetilde\gamma)(t)(\partial, \partial) \equiv L(\gamma)^2,
    \qquad
    L(\widetilde\gamma) = L(\gamma),
    \qquad
    E(\widetilde\gamma) = \tfrac{1}{2} L(\gamma)^2.
  \end{align*}
\end{lem}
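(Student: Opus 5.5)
This is the classical arc-length reparametrization, carried out on the scalar speed function. The only diffeological input is the change-of-variables rule for tangent vectors along a reparametrized path.

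First, set $f(t) := \bigl(g(\gamma)(t)(\partial,\partial)\bigr)^{1/2}$. Because $\gamma$ is regular, $g(\gamma)(t)(\partial,\partial) > 0$ for all $t$, and $g(\gamma)(t)(\partial,\partial)$ is a smooth function of $t$ (it is $g \circ \langle \dot\gamma,\dot\gamma\rangle$). Hence $f$ is smooth and strictly positive. We may assume $L(\gamma) > 0$, which holds since $f > 0$, and define
\[
  \psi(t) := \frac{1}{L(\gamma)}\int_0^t f(r)\,dr .
\]
Then $\psi(0) = 0$, $\psi(1) = 1$ and $\psi' = f/L(\gamma) > 0$. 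So $\psi$ is a diffeomorphism of $[0,1]$, extending smoothly to a neighbourhood, and $\widetilde\gamma := \gamma\circ\psi^{-1}$ is again a path from $x$ to $y$. Strictly speaking, $\widetilde\gamma$ must be defined on all of $\R$ to lie in $\Path(M;x,y)$. I would extend $\psi$ to a diffeomorphism $\R \to \R$ with $\psi' > 0$ everywhere, for instance by integrating a positive smooth extension of $f$. Only the values on $[0,1]$ enter $E$ and $L$.

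The key step is the velocity of $\widetilde\gamma$. The function $\widetilde\gamma = \gamma\circ\psi^{-1}$ is a composite of the plot $\gamma$ with the smooth map $\psi^{-1}\colon \R\to\R$. By the defining relation of the colimit (Remark~\ref{rem:tangent-structure-2}),
\[
  [\widetilde\gamma, s, \partial] = [\gamma\circ\psi^{-1}, s, 1] = [\gamma, \psi^{-1}(s), (\psi^{-1})'(s)] .
\]
Next, \eqref{eq:TP-linear} gives $[\gamma,\psi^{-1}(s),(\psi^{-1})'(s)] = \kappa_M\bigl((\psi^{-1})'(s), [\gamma,\psi^{-1}(s),1]\bigr)$. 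Bilinearity of $g$ then yields
\[
  g(\widetilde\gamma)(s)(\partial,\partial) = \bigl((\psi^{-1})'(s)\bigr)^2 f(\psi^{-1}(s))^2 .
\]
Writing $t = \psi^{-1}(s)$, we have $(\psi^{-1})'(s) = 1/\psi'(t) = L(\gamma)/f(t)$. Therefore $g(\widetilde\gamma)(s)(\partial,\partial) \equiv L(\gamma)^2$, which is the first claim.

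The remaining identities follow directly. From the constant speed,
\[
  L(\widetilde\gamma) = \int_0^1 L(\gamma)\,ds = L(\gamma), \qquad E(\widetilde\gamma) = \tfrac12\int_0^1 L(\gamma)^2\,ds = \tfrac12 L(\gamma)^2 .
\]
Alternatively, the identity for $E$ is the equality case of Lemma~\ref{lem:energy-length}. I expect no step to be a real obstacle. The only point needing care is the chain-rule identity for $T$ applied to a reparametrized plot, and it is handled by the elementary move in the colimit together with Lemma~\ref{lem:TP-linear}. A minor technical point is extending $\psi$ to all of $\R$ so that $\widetilde\gamma \in \Path(M;x,y)$.
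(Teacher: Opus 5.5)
Your proof is correct and follows the paper's argument essentially verbatim: the same arc-length $\psi$, the same elementary colimit move $[\gamma\circ\psi^{-1},s,1]=[\gamma,\psi^{-1}(s),(\psi^{-1})'(s)]$ rewritten via $\kappa_M$, and bilinearity of $g$ to obtain the constant speed $L(\gamma)^2$. You also extend $\psi$ to a diffeomorphism of $\R$, which the paper passes over; to make that extension surjective onto $\R$, choose the positive extension of $f|_{[0,1]}$ to equal, say, $1$ outside a compact set, since an arbitrary positive extension may have finite integral towards $\pm\infty$.
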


\begin{proof}
  Write $f(\tau) := g(\gamma)(\tau)(\partial, \partial)^{1/2} > 0$ and put
  $\psi(t) := L(\gamma)^{-1} \int_0^t f(\tau)\, d\tau$. Regularity gives
  $\psi' = L(\gamma)^{-1} f > 0$, so $\psi$ is a diffeomorphism of $[0,1]$ fixing
  the endpoints, with smooth inverse satisfying
  $(\psi^{-1})' = L(\gamma) / (f \circ \psi^{-1})$.

  Since $\psi^{-1}$ is a morphism of plots from $\widetilde\gamma$ to $\gamma$
  ($\gamma \circ \psi^{-1} = \widetilde\gamma$), functoriality of $T$ gives
  $T(\widetilde\gamma) = T(\gamma) \circ T(\psi^{-1})$, so by
  Remark~\ref{rem:tangent-structure-2}, for each $u$,
  \begin{align*}
    [\widetilde\gamma, u, \partial]
      = [\gamma,\ \psi^{-1}(u),\ (\psi^{-1})'(u)]
      = \kappa_M\bigl( (\psi^{-1})'(u),\ [\gamma, \psi^{-1}(u), \partial] \bigr),
  \end{align*}
  the second equality by
  $\kappa_M(r, [P, u, u']) = [P, u, r u']$
  (Definition~\ref{def:nat-trans}). Equivalently
  $\dot{\widetilde\gamma}
   = \kappa_M \circ \langle (\psi^{-1})',\ \dot\gamma \circ \psi^{-1} \rangle$;
  the only differentiation involved is that of $\psi^{-1}$ on the domain $\R$.

  By bilinearity of $g$,
  \begin{align*}
    g(\widetilde\gamma)(u)(\partial, \partial)
      = (\psi^{-1})'(u)^2 \, g(\gamma)(\psi^{-1}(u))(\partial, \partial)
      = \Bigl( \frac{L(\gamma)}{f(\psi^{-1}(u))} \Bigr)^{2} f(\psi^{-1}(u))^2
      = L(\gamma)^2 .
  \end{align*}
  Hence the speed is the constant $L(\gamma)$. Therefore
  $L(\widetilde\gamma) = \int_0^1 L(\gamma)\, du = L(\gamma)$ and
  $E(\widetilde\gamma) = \tfrac12 \int_0^1 L(\gamma)^2\, du = \tfrac12 L(\gamma)^2$.
\end{proof}

\begin{prop}\label{prop:minimizer-geodesic}
  Assume the standing hypotheses of \S\ref{subsec:energy}, so that
  Proposition~\ref{prop:critical-geodesic} holds. Let
  $\gamma \in \Path(M; x, y)$ be a regular path that minimizes length, i.e.\
  $L(\gamma) \le L(\sigma)$ for all $\sigma \in \Path(M; x, y)$. Then its
  constant-speed reparametrization $\widetilde\gamma$ (Lemma~\ref{lem:reparam})
  is a geodesic.
\end{prop}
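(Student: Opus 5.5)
The strategy is to show that $\widetilde\gamma$ minimizes energy on $\Path(M;x,y)$ among competitors near it. Minimality makes it a critical path of $E$, and Proposition~\ref{prop:critical-geodesic} then turns criticality into the geodesic equation.

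First, by Lemma~\ref{lem:reparam}, $\widetilde\gamma \in \Path(M;x,y)$ with $E(\widetilde\gamma) = \tfrac12 L(\gamma)^2$ and $L(\widetilde\gamma) = L(\gamma)$. A small point: $\psi$ is only given on $[0,1]$, while paths are defined on $\R$. I would extend $\psi$ to a diffeomorphism of $\R$ with positive derivative, for instance affinely near the ends after a cutoff. Only the restriction to $[0,1]$ enters $E$, $L$ and the endpoint conditions, so the choice of extension does not matter.

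Second, fix any variation plot $P$ of $\widetilde\gamma$. For every $s$ the path $P(s)$ lies in $\Path(M;x,y)$. Lemma~\ref{lem:energy-length} gives $2E(P(s)) \ge L(P(s))^2$. Here $L$ should be read as $\int_0^1 (g(\sigma)(t)(\partial,\partial))^{1/2}\,dt$, which makes sense for non-regular paths too, since the Cauchy--Schwarz argument never uses regularity. Length minimality gives $L(P(s)) \ge L(\gamma)$. Chaining these,
\[
E(P(s)) \;\ge\; \tfrac12 L(\gamma)^2 \;=\; E(\widetilde\gamma) \;=\; E(P(0)).
\]
So $s \mapsto E(P(s))$ attains a minimum at $s=0$.

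Third, I need $E\circ P$ to be differentiable at $0$. Since $\ad(P)$ is a $2$-plot, $(s,t)\mapsto g(\ad(P))(s,t)(\partial_t,\partial_t)$ is smooth by Notation~\ref{not:g-along-plot}. Differentiating under the integral over the compact interval $[0,1]$, exactly as in the proof of Lemma~\ref{lem:first-variation-formula}, shows $E\circ P$ is smooth. An interior minimum of a smooth function of $s$ has zero derivative, so $\frac{d(E\circ P)}{ds}\big|_{s=0}=0$. As $P$ was arbitrary, $\widetilde\gamma$ is a critical path (Definition~\ref{def:critical-path}). Under the standing hypotheses of \S\ref{subsec:energy}, Proposition~\ref{prop:critical-geodesic} then shows that $\widetilde\gamma$ is a geodesic.

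The main obstacle is the interpretive step in the second paragraph. The minimality hypothesis compares against all $\sigma$, but $L$ was defined only for regular paths, whereas varied paths $P(s)$ need not be regular. I would handle this by extending $L$ to all paths through the square-root integrand, noting that Lemma~\ref{lem:energy-length} holds verbatim for this extension, and reading the hypothesis with this extended $L$. Alternatively, one can note that regularity is an open condition in $s$, because the speed is continuous on the compact set $[0,1]\times[-\varepsilon',\varepsilon']$ and strictly positive at $s=0$. Then $P(s)$ is regular for small $|s|$, which is all the local minimum argument needs.
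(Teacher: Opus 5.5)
Your argument is correct and is the paper's proof: the chain $E(\sigma) \ge \tfrac12 L(\sigma)^2 \ge \tfrac12 L(\gamma)^2 = E(\widetilde\gamma)$ shows that $\widetilde\gamma$ minimizes $E$ on $\Path(M;x,y)$, so $E\circ P$ has a minimum at $s=0$ for every variation plot $P$, which makes $\widetilde\gamma$ critical and hence a geodesic by Proposition~\ref{prop:critical-geodesic}. Your additional remarks fill in points the paper leaves implicit: extending $\psi$ to $\R$, making sense of $L$ for non-regular competitors, and justifying the smoothness of $E\circ P$.
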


\begin{proof}
  For any $\sigma \in \Path(M; x, y)$, Lemma~\ref{lem:energy-length} and
  minimality of $L(\gamma)$ give
  \begin{align*}
    E(\sigma) \ge \tfrac12 L(\sigma)^2 \ge \tfrac12 L(\gamma)^2
      = E(\widetilde\gamma),
  \end{align*}
  the last equality by Lemma~\ref{lem:reparam}. Hence $\widetilde\gamma$
  minimizes $E$ over $\Path(M; x, y)$. Consequently, for every variation plot
  $P$ of $\widetilde\gamma$ the smooth function $s \mapsto E(P(s))$ attains a
  minimum at $s = 0$ (as $P(s) \in \Path(M; x, y)$ for all $s$), so
  $\frac{d(E \circ P)}{ds}\big|_{s=0} = 0$; that is, $\widetilde\gamma$ is a
  critical path (Definition~\ref{def:critical-path}). By
  Proposition~\ref{prop:critical-geodesic}, $\widetilde\gamma$ is a geodesic.
\end{proof}

\section{Example: Mapping Spaces}\label{sec:mapping-space}

For elastic diffeological spaces, the explicit descriptions of geometric objects
in terms of domains that are standard in ordinary Riemannian geometry via local
coordinates are generally not available. The main reasons are that
$\colim_{P \in \D(M)}\hat{T}^2U_P \neq T^2M$ in general, and that vector fields
cannot be expressed as limits or colimits of vector fields on domains.
In this section we take mapping spaces as our example and examine the geometric
concepts constructed above in this concrete setting: the tangent structure,
connections and curvature, the Levi-Civita connection, and geodesics all lift
pointwise from the target.

\subsection{Tangent Structure on Mapping Spaces}\label{subsec:mapping-tangent}

The following results are due to Blohmann.

\begin{prop}[{\cite[Proposition~5.8]{B23}}]\label{prop:mapping-space-tangent-stable}
  Let $V$ be a tangent-stable elastic vector space.
  For any diffeological space $Z$, $C^\infty(Z, V)$ is elastic and
  \begin{align*}
    TC^\infty(Z, V) \cong C^\infty(Z, TV).
  \end{align*}
\end{prop}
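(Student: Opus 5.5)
The plan is to prove the identification $TC^\infty(Z,V)\cong C^\infty(Z,TV)$ first, and then derive elasticity of $C^\infty(Z,V)$ from elasticity of $V$ through it. Here \emph{tangent-stable} means, as in \cite{B23}, that $V$ is elastic and $TV\cong V\times V$ compatibly with the tangent structure, with all iterates $T^nV$ of the same form.

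\textbf{Step 1: the comparison map.} Cartesian closedness gives the evaluation $\evl\colon Z\times C^\infty(Z,V)\to V$. Applying $T$, using that $T$ preserves products (Proposition~\ref{prop:tangent-structure-1}), and precomposing with $0_Z$ on the $Z$ factor, we obtain
\[
  Z\times TC^\infty(Z,V)\longrightarrow TV .
\]
Its adjoint is the comparison $\Psi\colon TC^\infty(Z,V)\to C^\infty(Z,TV)$. On representatives it is explicit: $\Psi[P,u,v]$ is the map $z\mapsto[\ad(P)(z,-),u,v]$.

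\textbf{Step 2: an inverse.} Tangent-stability identifies $C^\infty(Z,TV)$ with $C^\infty(Z,V)\times C^\infty(Z,V)$. For a pair $(f,h)$ I will use the $1$-plot
\[
  P(s)=f+s\,h ,
\]
which is affine in $s$ because $V$ is a vector space. Set $\Phi(f,h)=[P,0,1]$. One checks $\Psi\circ\Phi=\id$ directly, since $\partial_s(f+sh)=h$.

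For $\Phi\circ\Psi=\id$, take a plot $Q$ of $C^\infty(Z,V)$ and the class $[Q,u,v]$. The goal is to show it equals the class of the affine plot through $Q(u)$ with slope $\partial_v Q(u)$. I will use that $\delta$ is a subduction (Proposition~\ref{prop:tangent-structure-1}), which reduces to curves $\gamma$. Then I compare $[\gamma,0,1]$ with $[\text{affine},0,1]$ through the $2$-parameter plot $(s,r)\mapsto \gamma(0)+r\,(\gamma(s)-\gamma(0))/s$ interpolating between them. A Hadamard-lemma factorization $\gamma(s)=\gamma(0)+s\,\eta(s)$, with $\eta$ smooth into $C^\infty(Z,V)$, makes this interpolating plot smooth.

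The vector-space structure of $V$ gives the affine combinations. Elasticity of $V$, via (E1) and Lemma~\ref{lem:TP-linear}, supplies the linearity of $T(P)$ needed to identify the classes. Smoothness of $\Phi$ in families follows because the construction is natural in plots.

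\textbf{Step 3: elasticity.} Once $T$ is identified, iterate: $T^nC^\infty(Z,V)\cong C^\infty(Z,T^nV)$, and similarly for the fibre products $T_k$, since $C^\infty(Z,-)$ preserves limits. Each axiom (E1)--(E5) is then the image under $C^\infty(Z,-)$ of the same axiom for $V$, with $\theta_k$, $\tau$, $\lambda$, $\nu_k$ acting pointwise. Injectivity and induction are preserved by $C^\infty(Z,-)$, so (E3) and (E4) transfer.

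\textbf{Main obstacle.} The hard point is the Hadamard factorization in Step 2 for $C^\infty(Z,V)$-valued plots. For a convenient $V$ this uses the integral formula
\[
  \eta(s)=\int_0^1\gamma'(ts)\,dt ,
\]
whose smoothness as a map into $C^\infty(Z,V)$ must be argued plotwise through the adjoint $\ad(\gamma)$. I would try to obtain this directly from tangent-stability of $V$ first, before resorting to convenient-calculus integration.
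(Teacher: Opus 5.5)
The paper does not prove this statement; it quotes it from \cite[Proposition~5.8]{B23}. Measured against what the statement requires, your proposal has a genuine gap at its decisive step, $\Phi\circ\Psi=\id$.

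\textbf{The main gap: circularity in Step 2.} With $F(s,r)=\gamma(0)+r\,\eta(s)$ you get $[\gamma,0,1]=[F,(0,0),(1,1)]$ and $[s\mapsto\gamma(0)+s\eta(0),0,1]=[F,(0,0),(0,1)]$. Equating these requires the cocone map $T(F)\colon\hat T\R^2\to TC^\infty(Z,V)$ to be additive, i.e.\ Lemma~\ref{lem:TP-linear} applied to the space $C^\infty(Z,V)$.

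But the addition on $TC^\infty(Z,V)$ is $\mathbb{L}\hat{+}\circ\theta_2^{-1}$, which exists only once (E1) holds for $C^\infty(Z,V)$. Knowing $[F,(0,0),(1,0)]=0$ in $TC^\infty(Z,V)$ does not give $[F,(0,0),(1,0),(0,1)]=[F,(0,0),(0,0),(0,1)]$ in $\mathbb{L}\hat T_2C^\infty(Z,V)$ unless $\theta_2$ is injective. You obtain elasticity of $C^\infty(Z,V)$ only in Step~3, and you derive it from Step~2.

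Elasticity of $V$ gives additivity of $T(\evl_z)\circ T(F)=T(\evl_z\circ F)$, i.e.\ of $\Psi\circ T(F)$. Passing from there back to $T(F)$ is exactly the injectivity of $\Psi$ you are trying to prove. So the argument is circular.

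\textbf{A repair that needs no linearity.} Apply your Hadamard factorization twice, giving $\gamma(s)=f+sh+s^2\zeta(s)$. Then use the plot $C(s,a,r)=f+sh+a\,\zeta(r)$ of $C^\infty(Z,V)$.
\begin{itemize}
\item We have $\gamma=C\circ n$ with $n(s)=(s,s^2,s)$.
\item The velocity $n'(0)=(1,0,1)$ is tangent to the slice $j(s,r)=(s,0,r)$.
\item On that slice, $C\circ j=(s\mapsto f+sh)\circ\pr_1$.
\end{itemize}
Hence $[\gamma,0,1]=[C,(0,0,0),(1,0,1)]=[C\circ j,(0,0),(1,1)]=[s\mapsto f+sh,0,1]$, using elementary moves in the colimit alone.

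\textbf{Two further steps are not justified as written.}
\begin{itemize}
\item (E1) for $C^\infty(Z,V)$ is not the image under $C^\infty(Z,-)$ of (E1) for $V$. The source $\mathbb{L}\hat T_kC^\infty(Z,V)$ is a colimit over plots of the mapping space, not $C^\infty(Z,\mathbb{L}\hat T_kV)$. Injectivity of $\theta_k$ needs the same jet argument for $k$-parameter plots, using a second-order Hadamard factorization in $k$ variables.
\item The Hadamard factorization is only available, via $\eta(s)=\int_0^1\gamma'(ts)\,dt$ checked along plots of $Z$, when $V$ admits such integrals, e.g.\ when $V$ is convenient. Tangent stability of $V$ supplies derivatives, not smooth difference quotients. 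As it stands, the proposal therefore covers at most the convenient case, not an arbitrary tangent-stable elastic vector space as in the statement.
\end{itemize}
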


\begin{prop}[{\cite[Corollary~5.9]{B23}}]\label{prop:mapping-space-manifolds}
  The mapping space $C^\infty(M, N)$ between manifolds is elastic, and
  \begin{align*}
    TC^\infty(M, N) \cong C^\infty(M, TN).
  \end{align*}
\end{prop}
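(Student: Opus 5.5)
The plan is to deduce the statement from Proposition~\ref{prop:mapping-space-tangent-stable} by localizing in the target $N$. Both sides are local in $N$: by Proposition~\ref{prop:tangent-structure-1}, $T$ is local, and $C^\infty(M,-)$ applied to an open cover of $N$ gives a cover of $C^\infty(M,N)$. That cover is by D-open subsets only when $M$ is compact. Without compactness, I would instead work plot-by-plot, since a plot of $C^\infty(M,N)$ is a smooth map $U\times M\to N$. So I work plotwise. Fix a plot $P\colon U_P\to C^\infty(M,N)$, with adjoint $\ad(P)\colon U_P\times M\to N$. After shrinking $U_P$ around a point $u$ and restricting to a relatively compact open subset of $M$, the image of $\ad(P)$ lies in a chart $V\subseteq N$, which is an open subset of a Euclidean space. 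Euclidean spaces are tangent-stable elastic vector spaces, so Proposition~\ref{prop:mapping-space-tangent-stable} gives $TC^\infty(M',V)\cong C^\infty(M',TV)$ for such pieces.

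First I construct the comparison map $\Psi\colon TC^\infty(M,N)\to C^\infty(M,TN)$ globally. On representatives it is $\Psi[P,u,v](x)=T(\ad(P))\bigl((u,x),(v,0)\bigr)$, that is, $[\ad(P)(-,x),u,v]$ evaluated pointwise. This respects the generating relations of Remark~\ref{rem:tangent-structure-2} by the chain rule, so it descends to the colimit. It is smooth because composing with a plot yields a smooth map $U\times\R^{\dim U}\times M\to TN$. Alternatively, $\Psi$ is the adjoint of $T(\evl)\colon TC^\infty(M,N)\times TM\to TN$ restricted along the zero section of $TM$, using that $T$ preserves products.

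Next I build the inverse. An element $\xi\in C^\infty(M,TN)$ with base map $f=\pi_N\circ\xi$ should be written as $[P,0,1]$ for the $1$-plot $P(s)=\exp_f(s\xi)$, where $\exp$ is taken for some auxiliary Riemannian metric on $N$. This is smooth in families, so the inverse is smooth as a map $C^\infty(M,TN)\to TC^\infty(M,N)$. Checking $\Psi\circ\Psi^{-1}=\id$ reduces to $\frac{d}{ds}\exp(s\xi)|_0=\xi$. Checking $\Psi^{-1}\circ\Psi=\id$ requires showing that $[P,u,v]=[\exp_{P(u)}(s\Psi[P,u,v]),0,1]$. I would connect these two classes through the $2$-parameter plot
\begin{equation*}
  (s,r)\mapsto \exp_{P(u)}\bigl(r\,\exp_{P(u)}^{-1}P(u+sv)\bigr),
\end{equation*}
which is defined for small $s$ because the image of $\ad(P)$ near $\{u\}\times M$ is uniformly close to $P(u)$. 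Compactness of $M$ is used here for a uniform injectivity radius along $P(u)(M)$, so this step needs $M$ compact.

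Elasticity of $C^\infty(M,N)$ then follows by transporting the axioms (E1)--(E5) through $\Psi$. Iterating $\Psi$ gives $T^kC^\infty(M,N)\cong C^\infty(M,T^kN)$ and fibre-product analogues, since $C^\infty(M,-)$ preserves limits. The comparison maps $\theta_k$, $\theta^2$ and $\nu_k$ are then identified with $C^\infty(M,-)$ applied to those of $N$, and the latter are isomorphisms or inductions because $N$ is a manifold. The main obstacle I expect is this inverse construction, specifically verifying that the chain of elementary moves identifying the two representatives exists in the colimit. That is exactly where the non-local, generated equivalence relation bites. If the direct argument fails, I would fall back on showing that $\Psi$ is a subduction, using that $\delta$ is a subduction by Proposition~\ref{prop:tangent-structure-1}, together with injectivity. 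Injectivity should follow by testing against the $1$-forms $\evl_x^*\alpha$ via Lemma~\ref{lem:cotangent-omega}, which separate points of each fibre.
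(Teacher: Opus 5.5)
The paper gives no proof of this statement; it is quoted from \cite[Corollary~5.9]{B23}. Your argument therefore has to stand on its own, and at the decisive step it does not: the identity $\Psi^{-1}\circ\Psi=\id$ is never established. Put $f=P(u)$ and $h(s)=\exp_f^{-1}P(u+sv)$. Your plot $Q(s,r)=\exp_f\bigl(r\,h(s)\bigr)$ equals $P(u+sv)$ at $r=1$ and is constant along $s=0$ and along $r=0$. The comparison curve $s\mapsto\exp_f(s\xi)$ does not factor through $Q$ in general, so $Q$ yields no chain of elementary moves. A plot that does contain both curves, e.g.\ $G(\sigma,s)=\exp_f\bigl(\sigma\,k(s)\bigr)$ with $h(s)=s\,k(s)$ and $k(0)=\xi$, presents them as $[G,(0,0),(1,1)]$ and $[G,(0,0),(1,0)]$. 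Equating these needs additivity of $T(G)$ on the fibre over $f$ (Lemma~\ref{lem:TP-linear}), which is available only once $C^\infty(M,N)$ is known to be elastic, so this route is circular.

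Your fallback is circular as well. The form $\evl_x^*\alpha$ pairs with $v$ as $\alpha\bigl(\Psi(v)(x)\bigr)$, so these forms separate a fibre exactly when $\Psi$ is injective; moreover Lemma~\ref{lem:cotangent-omega} presupposes elasticity. Your use of Proposition~\ref{prop:mapping-space-tangent-stable} on charts also fails: it says nothing about $C^\infty(M',V)$ for an open $V\subseteq\R^n$, and $f(\overline{M'})$ need not lie in one chart.

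The missing idea is to linearize through a vector space of sections rather than through charts of $N$. After embedding $N\subseteq\R^n$, the space $\Gamma=\Gamma(f^*TN)$ is a linear retract of $C^\infty(M,\R^n)$. Hence it is tangent-stable and elastic, by Proposition~\ref{prop:mapping-space-tangent-stable} and stability under retracts. Take $E_f\colon\Gamma\to C^\infty(M,N)$, $\eta\mapsto\exp\circ\eta$, for a complete metric. The two curves are then $E_f\circ h$ and $E_f\circ(s\mapsto s\xi)$. In $T\Gamma\cong\Gamma\times\Gamma$ one has $[h,0,1]=(0,\xi)=[s\xi,0,1]$, and applying $T(E_f)$ gives $[P,u,v]=\Psi^{-1}\Psi[P,u,v]$.

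Two further gaps remain. First, the statement is for arbitrary manifolds $M$, whereas your argument needs $M$ compact (uniform closeness of $P(u+sv)$ to $P(u)$, uniform injectivity radius). For noncompact $M$ no interval in $s$ works uniformly in $x$. One repair is the non-uniform reparametrization $\tilde c(s)(x)=c\bigl(\varepsilon(x)\beta(s/\varepsilon(x))\bigr)(x)$, with $\varepsilon>0$ a smooth function, $\beta$ bounded, and $\beta=\id$ near $0$.
\begin{itemize}
\item Both $c$ and $\tilde c$ factor through $C^\infty(M,\R)$ via $\sigma\mapsto\bigl(x\mapsto c(\sigma(x))(x)\bigr)$, along curves with the same class $(0,1)$ in $TC^\infty(M,\R)\cong C^\infty(M,\R)^2$. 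Hence $[\tilde c,0,1]=[c,0,1]$.
\item $\tilde c$ factors through a map $\Gamma\to C^\infty(M,N)$ defined on all of $\Gamma$: a local addition precomposed with a fibrewise shrinking that is the identity near the zero section. The argument above then applies.
\end{itemize}

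Second, the transport of elasticity is incomplete. Axioms (E2)--(E4) do follow from three facts: $T^kC^\infty(M,N)\cong C^\infty(M,T^kN)$, $C^\infty(M,-)$ preserves fibre products and inductions, and the maps $T^k(\evl_m)$ are jointly injective. But (E1) concerns $\theta_k$, whose source $\mathbb{L}\hat T_kC^\infty(M,N)$ is a colimit over plots of the mapping space, and $\Psi$ says nothing about it. You must show that $k$-tuples of vectors on a common plot are identified simultaneously. The same factorization through $\Gamma$, where (E1) holds, does this, and likewise for the iterated spaces in (E5).
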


In both cases the isomorphism is the canonical comparison map, which exists
for arbitrary $M$ and $N$: for $m \in M$, let
$\evl_m \colon C^\infty(M, N) \to N$ denote the evaluation, and define
\begin{align*}
  \Psi_{M, N} \colon TC^\infty(M, N) \longrightarrow C^\infty(M, TN),
  \qquad
  \Psi_{M, N}(v)(m) := T(\evl_m)(v),
\end{align*}
which is smooth by cartesian closedness of $\mathsf{Diff}$.
This motivates the following definition.

\begin{defn}\label{def:tangent-commuting}
A pair $(M, N)$ of diffeological spaces with $N$ elastic is called
\textbf{tangent-commuting} if $C^\infty(M, N)$ is elastic and the canonical map
$\Psi_{M, N} \colon TC^\infty(M, N) \to C^\infty(M, TN)$ is a diffeomorphism.
\end{defn}

Under this identification, $T(\evl_m)$ corresponds to the evaluation of
$C^\infty(M, TN)$ at $m$; iterating,
\begin{align*}
   T^k(\evl_m) \colon T^kC^\infty(M, N) \to T^kN
\end{align*}
corresponds to the evaluation of $C^\infty(M, T^kN)$ at $m$,
provided the pairs $(M, T^jN)$ for $j < k$ are tangent-commuting.

\begin{convention}\label{conv:hereditarily-tc}
  Throughout this section we assume that $(M, T^kN)$ is tangent-commuting for every $k \geq 0$ (where $T^0N = N$). When $N$ is a manifold this follows from Proposition~\ref{prop:mapping-space-manifolds}, since $T^kN$ is again a manifold; when $N$ is a tangent-stable elastic vector space it follows from Proposition~\ref{prop:mapping-space-tangent-stable}, since $TN \cong N \times N$ is again a tangent-stable elastic vector space.
  We write $\X := C^\infty(M, N)$ and suppress the identifications $T^k\X \cong C^\infty(M, T^kN)$ from the notation. Note also that, being a right adjoint by cartesian closedness, $C^\infty(M, -)$ preserves fibre products; in particular $T_2\X \cong C^\infty(M, T_2N)$ and $T\X \times_{\X} T\X \times_{\X} T\X \cong C^\infty(M, TN \times_N TN \times_N TN)$.
\end{convention}

This assumption is used repeatedly to identify
$T^kC^\infty(M,N)$
with
$C^\infty(M,T^kN)$,
thereby allowing all higher-order constructions (connections, curvature and geodesic sprays) to be evaluated pointwise.

Since the tangent structure $(\pi_N, 0_N, \kappa_N, +_N, \tau_N, \lambda_N)$ on $N$ consists of natural transformations, and post-composition $C^\infty(M, -)$ is a covariant functor, the tangent structure on $\X$ is obtained by applying $C^\infty(M, -)$ to that of $N$. Explicitly:
\begin{description}
  \item[Projection]
    $\pi_{\X}(v) = \pi_N \circ v$,\quad
    $v \in C^\infty(M, TN)$.
  \item[Zero section]
    $0_{\X}(f) = 0_N \circ f$,\quad
    $f \in C^\infty(M, N)$.
  \item[Addition]
    $(v_1 +_{\X} v_2)(x) = v_1(x) +_N v_2(x)$,\quad
    $x \in M$.
  \item[Scalar multiplication]
    $\kappa_{\X}(r, v)(x) = \kappa_N(r, v(x))$,\quad
    $r \in \R$.
  \item[Symmetric structure]
    $\tau_{\X}(u) = \tau_N \circ u$,\quad
    $u \in C^\infty(M, T^2N)$.
  \item[Vertical lift]
    $\lambda_{\X}(u) = \lambda_N \circ u$.
\end{description}
The axioms of a tangent structure are equations between composites of these
natural transformations, and are therefore preserved by the functor
$C^\infty(M, -)$.

\subsection{Connections and Curvature on Mapping Spaces}%
\label{subsec:mapping-connection}

Connections on mapping spaces likewise lift from the target space.

\begin{prop}\label{prop:mapping-space-connection}
 Let $K_N$ and $H_N$ be a vertical connection and a horizontal connection on
 $N$, respectively. Define
 $K_{\X} \colon T^2\X \to T\X$ and
 $H_{\X} \colon T\X \times_{\X} T\X \to T^2\X$ by
 \begin{align*}
   K_{\X}(u) &:= K_N \circ u,
     \qquad u \in C^\infty(M, T^2N),\\
   H_{\X}(v, w) &:= H_N \circ \langle v, w \rangle,
     \qquad \langle v, w \rangle \in C^\infty(M, TN \times_N TN).
 \end{align*}
 Then $K_{\X}$ is a vertical connection on $\X$ and $H_{\X}$ is a horizontal
 connection on $\X$. If $K_N$ is torsion-free, so is $K_{\X}$; if $(K_N, H_N)$
 is a full connection, so is $(K_{\X}, H_{\X})$.
\end{prop}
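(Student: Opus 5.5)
The plan is to reduce every axiom to the corresponding axiom on $N$ by applying the functor $C^\infty(M,-)$, using the identifications $T^k\X \cong C^\infty(M,T^kN)$ of Convention~\ref{conv:hereditarily-tc}. The first thing to settle is how $T$ acts on a morphism of the form $C^\infty(M,F)$. For $F \colon A \to B$ between spaces such that $(M,A)$ and $(M,B)$ are tangent-commuting, I claim that under $\Psi$,
\[
  T\bigl(C^\infty(M,F)\bigr) = C^\infty(M,T(F)).
\]
To see this, note that $\evl_m \circ C^\infty(M,F) = F \circ \evl_m$. Applying $T$ gives $T(\evl_m) \circ T(C^\infty(M,F)) = T(F) \circ T(\evl_m)$. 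By definition $\Psi(v)(m) = T(\evl_m)(v)$, and $\Psi$ is injective, so this pointwise equality for every $m$ yields the claim. Applied to $F = K_N$ and $F = H_N$, it gives $T(K_\X) = T(K_N)\circ -$ and $T(H_\X) = T(H_N)\circ -$.

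For $H_\X$, I also need $T(T\X \times_\X T\X) \cong C^\infty(M, T(TN\times_N TN))$. This follows because $T$ preserves the pullbacks $T_2$ and $C^\infty(M,-)$ preserves fibre products. The same reasoning shows that $T_{(2)}\kappa$, $\kappa^{(1)}$, $\kappa^{(2)}$, $\mu$, $U$ and $\langle \id,\id\rangle$ on $\X$ are all postcomposition with their counterparts on $N$. In each case these maps are built from the structure maps of the tangent structure (already identified pointwise above) using composition, pairing and $T$.

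With these identifications in place, each axiom (a)--(f) of Definition~\ref{def:vertical-connection} for $K_\X$ is an equation between composites of maps of the form $C^\infty(M,F)$. Functoriality turns it into $C^\infty(M,-)$ applied to the same equation on $N$, which holds by hypothesis. Axioms (e) and (f) involve a factor $\R$. Here I use the identification $\R \times C^\infty(M,A) \to C^\infty(M,\R\times A)$, $(r,u)\mapsto \langle r,u\rangle$, and observe that $\kappa_\X(r,u)(x) = \kappa_N(r,u(x))$. So I would check these two axioms by direct evaluation at each $x\in M$ rather than through pure functoriality. The same argument handles axioms (a)--(f) of Definition~\ref{def:horizontal-connection}. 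It also handles torsion-freeness, since $K_\X\circ\tau_\X = (K_N\circ\tau_N)\circ - = K_N\circ - = K_\X$. For the full connection, axioms (a) and (b) of Definition~\ref{def:full-connection} are likewise pointwise, using that $+$, $\mu$, $U$ and $\langle K,\pi\rangle$ act pointwise.

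The step I expect to be the main obstacle is the very first one. I need to verify that the identifications $\Psi$ at the various levels are compatible, so that, for example, $T(\lambda_\X)$, $\tau_{T\X}$ and $\lambda_{T\X}$ appearing in axioms (c) and (d) are genuinely postcomposition with $T(\lambda_N)$, $\tau_{TN}$ and $\lambda_{TN}$ under the iterated isomorphism $T^3\X\cong C^\infty(M,T^3N)$. I would do this by induction on $k$, using the pair $(M,T^{k}N)$ at each stage. The check at each level is the evaluation computation above, applied with $N$ replaced by $T^jN$. Naturality of each structure map with respect to $\evl_m$ then identifies its components on $\X$ with postcomposition. Once this bookkeeping is done, everything else is formal.
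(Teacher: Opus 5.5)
Your proposal is correct and follows essentially the same route as the paper. Under the identifications $T^k\X \cong C^\infty(M,T^kN)$, every structure map on $\X$ is postcomposition with its counterpart on $N$, so the equational axioms transfer by functoriality of $C^\infty(M,-)$ and the scalar axioms are checked pointwise in $m \in M$; your explicit check that $T(C^\infty(M,F))$ corresponds to $C^\infty(M,T(F))$ via naturality of $\Psi$, together with the level-by-level compatibility of the identifications, supplies bookkeeping that the paper's two-line proof leaves implicit.
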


\begin{proof}
  Under the identifications of Convention~\ref{conv:hereditarily-tc}, $K_{\X} = C^\infty(M, K_N)$, $H_{\X} = C^\infty(M, H_N)$, and every structure map appearing in Definitions~\ref{def:vertical-connection}, \ref{def:horizontal-connection} and~\ref{def:full-connection} is the image under $C^\infty(M, -)$ of the corresponding map for $N$. 
  Each axiom, as well as torsion-freeness, is an equation between composites of such maps,
  and is therefore inherited from $N$ by functoriality; the scalar axioms, which involve the parameter $\R$, are likewise verified pointwise in $m \in M$.
\end{proof}

\begin{prop}\label{prop:mapping-space-effective}
   If $K_N$ is an effective vertical connection on $N$ (Definition~\ref{def:effective-vertical-connection}), then $K_{\X}$ is also effective.
\end{prop}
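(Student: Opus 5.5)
The plan is to show that the map $\Theta_{\X} = \langle \pi_{T\X},\, T(\pi_{\X}),\, K_{\X}\rangle \colon T^2\X \to T\X \times_{\X} T\X \times_{\X} T\X$ is, under the identifications of Convention~\ref{conv:hereditarily-tc}, exactly the image under the functor $C^\infty(M,-)$ of the diffeomorphism $\Theta_N = \langle \pi_{TN},\, T(\pi_N),\, K_N\rangle \colon T^2N \to TN\times_N TN\times_N TN$. Since any functor sends isomorphisms to isomorphisms, $C^\infty(M,\Theta_N)$ is a diffeomorphism with inverse $C^\infty(M,\Theta_N^{-1})$, i.e.\ post-composition with $\Theta_N^{-1}$. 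It then only remains to check that this diffeomorphism is $\Theta_{\X}$.

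I would first make the two sides explicit. By Convention~\ref{conv:hereditarily-tc}, $T^2\X \cong C^\infty(M,T^2N)$. Since $C^\infty(M,-)$ is a right adjoint, it preserves the triple fibre product, so $T\X\times_{\X}T\X\times_{\X}T\X \cong C^\infty(M, TN\times_N TN\times_N TN)$. Next I would compare componentwise. From the list of structure maps in \S\ref{subsec:mapping-tangent}, $\pi_{T\X}(u) = \pi_{TN}\circ u$. By definition $K_{\X}(u) = K_N\circ u$ (Proposition~\ref{prop:mapping-space-connection}). For $T(\pi_{\X})$, I would use that $\pi_{\X} = C^\infty(M,\pi_N)$ and that, under $\Psi$, the tangent map of a post-composition map $C^\infty(M,f)$ is post-composition with $T(f)$. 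The way to see this is to evaluate: $T(\evl_m)\circ T(C^\infty(M,f)) = T(f\circ \evl_m) = T(f)\circ T(\evl_m)$, which gives $\Psi\circ T(C^\infty(M,f)) = C^\infty(M,T f)\circ\Psi$. Hence $T(\pi_{\X})(u) = T(\pi_N)\circ u$. Pairing the three components then shows $\Theta_{\X}(u) = \Theta_N\circ u$ under the fibre-product identification, since that identification is itself given pointwise by $\langle a,b,c\rangle \mapsto (a,b,c)$.

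The only delicate step is the naturality of $\Psi$ in the target, applied at the level $T^2$, where one uses $\Psi_{M,TN}\circ T(\Psi_{M,N})$. I would handle this by the same evaluation argument applied twice, using that the family $\{\evl_m\}$ is jointly monic on $C^\infty(M,T^2N)$. The resulting diffeomorphism statement holds in the diffeological sense because the identifications of Convention~\ref{conv:hereditarily-tc} are themselves diffeomorphisms. Elasticity of $\X$ is part of the standing convention, so $\Theta_{\X}$ is a diffeomorphism of elastic spaces, i.e.\ $K_{\X}$ is effective.
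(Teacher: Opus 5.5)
Your proposal is correct and follows the paper's proof. Both arguments identify $\Theta_{\X}$ with $C^\infty(M,\Theta_N)$ under the identifications of Convention~\ref{conv:hereditarily-tc}, using that $C^\infty(M,-)$ preserves fibre products, and then use that functors preserve isomorphisms, so that $C^\infty(M,\Theta_N^{-1})$ is the smooth inverse. Your evaluation argument showing that $T(\pi_{\X})$ acts as post-composition with $T(\pi_N)$ spells out the naturality of $\Psi$ that the paper leaves implicit.
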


\begin{proof}
  Under the same identifications (using that $C^\infty(M, -)$ preserves
  fibre products),
  \begin{align*}
    \Theta_{\X} = \langle \pi_{T\X}, T(\pi_{\X}), K_{\X} \rangle
      = C^\infty(M, \Theta_N);
  \end{align*}
  since functors preserve isomorphisms,
  $C^\infty(M, \Theta_N^{-1})$ is a smooth inverse of $\Theta_{\X}$, so
  $K_{\X}$ is effective.
\end{proof}

The curvature (Section~\ref{sec:curvature}) also localizes.

\begin{prop}\label{prop:mapping-space-curvature}
  Let $K_N$ be a vertical connection on $N$. Under the identification $T^3\X \cong C^\infty(M, T^3N)$, the curvature morphism of $K_{\X}$ (Definition~\ref{def:curvature-morphism}) is given by post-composition with that of $K_N$:
  \begin{align*}
    C_{K_{\X}}(u) = C_{K_N} \circ u,
    \qquad u \in C^\infty(M, T^3N).
  \end{align*}
  In particular, $K_{\X}$ is flat if and only if $K_N$ is flat.
\end{prop}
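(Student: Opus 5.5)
The plan is to write $C_{K_{\X}} = K_{\X}\circ T(K_{\X})\circ \tau_{T\X} - K_{\X}\circ T(K_{\X})$ and identify each ingredient, under Convention~\ref{conv:hereditarily-tc}, with the image under $C^\infty(M,-)$ of the corresponding map for $N$. Proposition~\ref{prop:mapping-space-connection} already gives $K_{\X} = C^\infty(M,K_N)$. The tangent structure on $\X$ is obtained componentwise by applying $C^\infty(M,-)$ to that of $N$. Applying this at the object $T\X \cong C^\infty(M,TN)$, i.e.\ to the pair $(M,TN)$, which is tangent-commuting by the convention, yields $\tau_{T\X} = C^\infty(M,\tau_{TN})$.

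The key step is $T(K_{\X}) = C^\infty(M, T(K_N))$ as maps $T^3\X\cong C^\infty(M,T^3N)\to T^2\X\cong C^\infty(M,T^2N)$. More generally, for any smooth $\phi\colon A\to B$ with $(M,A)$ and $(M,B)$ tangent-commuting, I would show that $\Psi_{M,B}\circ T(C^\infty(M,\phi)) = C^\infty(M,T\phi)\circ\Psi_{M,A}$. The argument is to evaluate at $m\in M$ and use $\evl_m\circ C^\infty(M,\phi) = \phi\circ \evl_m$, then apply $T$:
\[
\Psi_{M,B}(T(\phi_*)v)(m) = T(\evl_m)T(\phi_*)v = T\phi\,T(\evl_m)v = T\phi\circ\Psi_{M,A}(v)(m).
\]
I apply this with $A=T^2N$ and $B=TN$. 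I would then check that the iterated identifications $T^k\X\cong C^\infty(M,T^kN)$ compose consistently, namely that $T(\Psi)$ followed by $\Psi$ is the iterated evaluation, as remarked after Definition~\ref{def:tangent-commuting}. I expect this bookkeeping of nested identifications to be the only real obstacle, and I would handle it by always testing equalities after composing with $T^k(\evl_m)$, which jointly detect equality pointwise in $m$.

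With these identifications, both composites $K_{\X}\circ T(K_{\X})\circ\tau_{T\X}$ and $K_{\X}\circ T(K_{\X})$ become post-composition with $K_N\circ T(K_N)\circ\tau_{TN}$ and $K_N\circ T(K_N)$ respectively. The difference is formed in the fibrewise group structure of $T\X$, where addition and negatives are pointwise: $-v = \kappa_{\X}(-1,v)$ by Remark~\ref{rem:negatives}, and $\kappa_{\X}$ and $+_{\X}$ act pointwise. Hence $C_{K_{\X}}(u) = C_{K_N}\circ u$.

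For flatness, suppose first that $C_{K_N} = 0_N\circ\pi_N\circ\pi_{TN}\circ\pi_{T^2N}$. Composing with $u$ and using the pointwise descriptions of $\pi_{\X}$ and $0_{\X}$ gives $C_{K_{\X}} = 0_{\X}\circ\pi_{\X}\circ\pi_{T\X}\circ\pi_{T^2\X}$. For the converse, I would test on constant maps: for $w\in T^3N$, take $u=\mathrm{const}_w$ (smooth). Then $C_{K_{\X}}(u)=0$ gives $C_{K_N}(w) = 0_N(\pi_N\pi_{TN}\pi_{T^2N}(w))$. If $M=\emptyset$ the converse would fail, so I would assume $M\neq\emptyset$ tacitly.
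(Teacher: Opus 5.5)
Your proposal is correct and follows the same route as the paper: you identify each ingredient of $C_{K_{\X}}$ ($K_{\X}$, $T(K_{\X})$, $\tau_{T\X}$, and the pointwise fibrewise subtraction) with its counterpart on $N$ under Convention~\ref{conv:hereditarily-tc}, and you test the converse on constant maps. Your explicit naturality check for $\Psi$ and the testing against $T^k(\evl_m)$ spell out what the paper compresses into ``by the same argument as in Proposition~\ref{prop:mapping-space-connection}''. Your observation that the converse needs $M\neq\emptyset$ is a valid tacit assumption that the paper leaves unstated.
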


\begin{proof}
  Both composites $K_N \circ T(K_N) \circ \tau_{TN}$ and $K_N \circ T(K_N)$, as well as the fibrewise subtraction in the abelian group structure of $\pi_N \colon TN \to N$, are built from structure maps of $N$ that lift by post-composition; in particular the fibrewise addition and inverse of $\pi_{\X}$ are obtained by applying
  $C^\infty(M, -)$ to those of $\pi_N$. 
  Hence $C_{K_{\X}} = C^\infty(M, C_{K_N})$ by the same argument as in Proposition~\ref{prop:mapping-space-connection}.
  If $C_{K_N} = 0$, then $C_{K_{\X}}(u) = C_{K_N} \circ u = 0$ for every $u$.
  Conversely, if $C_{K_{\X}} = 0$, then evaluating at constant maps $u \equiv w$ for $w \in T^3N$ gives $C_{K_N}(w) = 0$.
\end{proof}

\begin{ex}\label{ex:flat-mapping-space}
  For $N = \R^n$ with the canonical vertical connection $K_0$ of Example~\ref{ex:euclidean-vertical-connection}, $C_{K_0} = 0$ by Example~\ref{ex:flat-euclidean}. Hence, for any manifold $M$, the connection $K_{C^\infty(M, \R^n)}$ on the mapping space $C^\infty(M, \R^n)$ is flat.
\end{ex}

\subsection{The Levi-Civita Connection on Mapping Spaces}%
\label{subsec:mapping-lc}

For the remainder of this section, $M$ is a closed manifold equipped with a
volume form $\vol_M$, and
$(N, g_N)$ is an elastic Riemannian diffeological space.

\begin{defn}\label{def:g-map}
  The \textbf{mapping-space metric} $g_{\mathit{map}}$ on $\X = C^\infty(M, N)$ is defined, for $v, w \in T\X$ lying in a common
  fibre, by
  \begin{align*}
    g_{\mathit{map}}(v, w)
      := \int_M g_N\bigl(T(\evl_m)(v),\, T(\evl_m)(w)\bigr)\, \vol_M(m).
  \end{align*}
  In the plot notation of Notation~\ref{not:g-along-plot} this reads $g_{\mathit{map}}(P)(u)(v, w) = \int_M g_N(\evl_m \circ P)(u)(v, w)\,\vol_M(m)$, as in \cite{KSS}.
\end{defn}

\begin{lem}\label{lem:g-map-metric}
  $g_{\mathit{map}}$ is a Riemannian metric on $\X$ in the sense of
  Definition~\ref{def:metric}; in particular it is definite.
\end{lem}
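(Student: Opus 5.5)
Under the identification $T\X \cong C^\infty(M, TN)$ of Convention~\ref{conv:hereditarily-tc}, $T(\evl_m)(v) = v(m)$. Two tangent vectors $v, w \in T\X$ lie in a common fibre exactly when $\pi_N \circ v = \pi_N \circ w$. Hence
\[
  g_{\mathit{map}}(v,w) = \int_M g_N\bigl(v(m), w(m)\bigr)\,\vol_M(m),
\]
where the integrand is the function $g_N \circ \langle v, w\rangle \colon M \to \R$. This function is smooth, because $\langle v, w\rangle \in C^\infty(M, T_2N)$ by Convention~\ref{conv:hereditarily-tc}. I will verify each axiom of Definition~\ref{def:metric} pointwise in $m$ and then integrate.

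The first step is smoothness. The map $g_{\mathit{map}}$ is the composite
\[
  T_2\X \cong C^\infty(M, T_2N) \xrightarrow{\,g_N \circ -\,} C^\infty(M,\R) \xrightarrow{\,\int_M(-)\vol_M\,} \R .
\]
Post-composition is smooth by cartesian closedness. Integration over the compact manifold $M$ is smooth on $C^\infty(M,\R)$ with the functional diffeology. To see this, take a plot $u \mapsto F(u,\cdot)$; then $F$ is smooth on $U \times M$, and differentiation under the integral sign, justified by compactness, makes $u \mapsto \int_M F(u,m)\,\vol_M(m)$ smooth.

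The second step is the algebraic axioms. Addition and scalar multiplication on $T\X$ are pointwise (see the list after Convention~\ref{conv:hereditarily-tc}). Bilinearity and symmetry of $g_N$ at each $m$ therefore give bilinearity and symmetry of the integrand, and linearity of the integral finishes these two axioms. For positivity I will take $\vol_M$ to be a positive volume form, which is implicit in ``volume form'' on a closed, hence orientable-by-choice, manifold. Then $g_N(v(m),v(m)) \ge 0$ gives $g_{\mathit{map}}(v,v) \ge 0$.

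The third step, definiteness, is the main point. Suppose $g_{\mathit{map}}(v,v)=0$. The function $h(m) := g_N(v(m),v(m))$ is continuous (indeed smooth) and nonnegative. If $h(m_0) > 0$ for some $m_0$, then $h > 0$ on an open neighbourhood of $m_0$, and since $\vol_M$ is positive the integral would be strictly positive. Hence $h \equiv 0$. Definiteness of $g_N$ then gives $v(m) = 0_N(\pi_N(v(m)))$ for every $m$. This says $v = 0_N \circ f$ with $f = \pi_N \circ v$, which is $0_{\X}(\pi_{\X}(v))$ by the pointwise description of $0_{\X}$ and $\pi_{\X}$. The converse direction follows from bilinearity.

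I expect the main obstacle to be smoothness of the integration map. The pointwise steps are routine, whereas integration requires arguing at the level of plots rather than pointwise. I would handle it by passing through adjoints as in Notation~\ref{not:adjoint} and using the classical parameter-dependent integral on $U \times M$.
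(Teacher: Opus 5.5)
Your proof is correct and follows essentially the same route as the paper. You write $g_{\mathit{map}}$ as integration against $\vol_M$ composed with pointwise evaluation of $g_N$, which is smooth by cartesian closedness. You check bilinearity, symmetry and positivity pointwise. You get definiteness by noting that a smooth non-negative function with vanishing integral is zero, then applying definiteness of $g_N$ and the identification $T\X \cong C^\infty(M,TN)$.

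You also make explicit two points the paper takes for granted. The first is smoothness of integration on $C^\infty(M,\R)$, which you obtain from parameter-dependent integrals over $U \times M$. The second is that $\vol_M$ must be positive with respect to the orientation it determines.
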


\begin{proof}
  Write $\mathcal{I} \colon C^\infty(M, \R) \to \R$ for the integration against $\vol_M$, a smooth linear map, and $\mathcal{G} \colon T_2\X \to C^\infty(M, \R)$ for the pointwise metric evaluation, $\mathcal{G}(v, w)(m) := g_N(T(\evl_m)(v), T(\evl_m)(w))$, which is smooth by cartesian closedness. 
  Then $g_{\mathit{map}} = \mathcal{I} \circ \mathcal{G}$ is smooth. 
  Bilinearity and symmetry hold pointwise in $m$ and are preserved by the linear map $\mathcal{I}$; positivity follows since the integrand is non-negative. For definiteness, suppose $g_{\mathit{map}}(v, v) = 0$. 
  The function $m \mapsto g_N(T(\evl_m)(v), T(\evl_m)(v))$ is smooth, non-negative, and has vanishing integral over the closed manifold $M$, hence vanishes identically; definiteness of $g_N$ then gives $T(\evl_m)(v) = 0$ for every $m$, i.e.\ $\Psi_{M,N}(v) = 0$, and so $v = 0$.
\end{proof}

Let $K_N$ be a vertical Levi-Civita connection for $g_N$.

\begin{prop}\label{prop:mapping-space-levi-civita}
   $K_{\X}$ is a vertical Levi-Civita connection for $g_{\mathit{map}}$.
   By Theorem~\ref{thm:uniqueness-cov-deriv}, its covariant derivative $\nabla^{K_{\X}}$ is the unique Levi-Civita covariant derivative for $g_{\mathit{map}}$.
\end{prop}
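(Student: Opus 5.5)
We must verify the two conditions of Definition~\ref{def:levi-civita} for $K_{\X}$ with respect to $g_{\mathit{map}}$. The second sentence of the statement then follows directly from Theorem~\ref{thm:uniqueness-cov-deriv}, since $g_{\mathit{map}}$ is definite by Lemma~\ref{lem:g-map-metric}.

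\emph{Torsion-freeness.} This is already given by Proposition~\ref{prop:mapping-space-connection}. Under the identification $T^2\X\cong C^\infty(M,T^2N)$ we have $\tau_{\X}(u)=\tau_N\circ u$, so
\[
K_{\X}\circ\tau_{\X}(u)=K_N\circ\tau_N\circ u=K_N\circ u=K_{\X}(u).
\]

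\emph{Metric compatibility.} The plan is to write $g_{\mathit{map}}=\mathcal{I}\circ\mathcal{G}$ as in the proof of Lemma~\ref{lem:g-map-metric}, and to note that under Convention~\ref{conv:hereditarily-tc} one has $T_2\X\cong C^\infty(M,T_2N)$ and $\mathcal{G}=C^\infty(M,g_N)$, that is, $\mathcal{G}(\xi)=g_N\circ\xi$. Then the chain rule gives
\[
d(g_{\mathit{map}})=\pr_2\circ T(\mathcal{I})\circ T(\mathcal{G}).
\]
Three identifications are needed, in this order.
\begin{enumerate}
\item $T(C^\infty(M,T_2N))\cong C^\infty(M,T(T_2N))$. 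This uses that $T$ preserves the fibre product together with Convention~\ref{conv:hereditarily-tc}.
\item $T(\mathcal{G})$ corresponds to post-composition with $T(g_N)$. This follows by naturality of $\Psi$: for each $m$ we have $\evl_m\circ\mathcal{G}=g_N\circ\evl_m$, so $T(\evl_m)\circ T(\mathcal{G})=T(g_N)\circ T(\evl_m)$, and $\Psi_{M,\R}$ is the identification $TC^\infty(M,\R)\cong C^\infty(M,T\R)$.
\item Since $\mathcal{I}$ is linear, $\pr_2\circ T(\mathcal{I})$ is $\mathcal{I}$ applied to the fibre component. On $T C^\infty(M,\R)\cong C^\infty(M,\R\times\R)$ this means $(f,h)\mapsto\int_M h\,\vol_M$.
\end{enumerate}
Combining these gives
\[
d(g_{\mathit{map}})(\Xi)=\int_M d(g_N)\bigl(\Xi(m)\bigr)\,\vol_M(m)
\]
for $\Xi\in C^\infty(M,T(T_2N))$.

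With this formula in hand, we apply the metric compatibility \eqref{eq:metric-compatibility} of $K_N$ pointwise in $m$:
\[
d(g_N)(\Xi(m))=g_N\bigl((K_N\times_\pi\pi_{TN})(\Xi(m))\bigr)+g_N\bigl((\pi_{TN}\times_\pi K_N)(\Xi(m))\bigr).
\]
Integrating over $M$ and using $K_{\X}=C^\infty(M,K_N)$ and $\pi_{T\X}=C^\infty(M,\pi_{TN})$, the right-hand side becomes
\[
g_{\mathit{map}}\circ(K_{\X}\times_\pi\pi_{T\X})+g_{\mathit{map}}\circ(\pi_{T\X}\times_\pi K_{\X}),
\]
which is \eqref{eq:metric-compatibility} for $K_{\X}$.

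The main obstacle is step (3): identifying $\pr_2\circ T(\mathcal{I})$ with integration of the derivative component, i.e.\ differentiating under the integral in Blohmann's tangent functor. I would first check that for a linear smooth map $L$ between tangent-stable elastic vector spaces, $T(L)=L\times L$ under $TV\cong V\times V$. I would verify this on representatives $[P,u,v]$, where $T(\mathcal{I})[P,u,v]=[\mathcal{I}\circ P,u,v]$ and $\pr_2$ of this is $d(\mathcal{I}\circ P)_u(v)$. The claim then reduces to $\partial_u\int_M P(u)(m)\,\vol_M=\int_M\partial_u\bigl(P(u)(m)\bigr)\,\vol_M$. This holds by classical differentiation under the integral sign, since $\ad(P)$ is smooth on $U_P\times M$ and $M$ is compact. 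Joint epimorphy (Lemma~\ref{lem:TP-linear}) then upgrades the pointwise statement on representatives to an equality of maps.
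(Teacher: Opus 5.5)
Your proposal is correct and follows essentially the same route as the paper. Torsion-freeness is inherited from Proposition~\ref{prop:mapping-space-connection}. Metric compatibility comes from writing $g_{\mathit{map}}=\mathcal{I}\circ\mathcal{G}$, passing $d$ under the integral, applying the compatibility condition of $K_N$ pointwise in $m$, and integrating back; the paper phrases the middle step through $h_m=g_N\circ T_2(\evl_m)$ and the chain rule, which is the same computation as your $T(\mathcal{G})=T(g_N)\circ(-)$. Your check of differentiation under the integral sign on representatives $[P,u,v]$, using compactness of $M$ and joint surjectivity of the cocone maps, fills in a step the paper only asserts from the linearity of $\mathcal{I}$.
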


\begin{proof}
   Torsion-freeness of $K_{\X}$ was shown in Proposition~\ref{prop:mapping-space-connection}. We verify the metric
   compatibility condition \eqref{eq:metric-compatibility} for $g_{\mathit{map}}$.

   For $m \in M$ write
   $h_m := g_N \circ (T(\evl_m) \times_{\evl_m} T(\evl_m))
   \colon T_2\X \to \R$, so that
   $g_{\mathit{map}} = \mathcal{I} \circ \mathcal{G}$ with
   $\mathcal{G}(\xi)(m) = h_m(\xi)$ as in Lemma~\ref{lem:g-map-metric}.
   Since $\mathcal{I}$ is smooth and linear, under the identification
   $TC^\infty(M, \R) \cong C^\infty(M, T\R) \cong C^\infty(M, \R \times \R)$
   its tangent map acts componentwise, whence
   $d(\mathcal{I} \circ \mathcal{G})
    = \mathcal{I} \circ \bigl(m \mapsto d(h_m)\bigr)$; that is,
   \begin{align}\label{eq:diff-under-integral}
     d(g_{\mathit{map}})(W)
       = \int_M d(h_m)(W)\, \vol_M(m),
     \qquad W \in T(T_2\X).
   \end{align}
   Next, $h_m = g_N \circ T_2(\evl_m)$ with
   $T_2(\evl_m) := T(\evl_m) \times T(\evl_m) \colon T_2\X \to T_2N$, so the
   chain rule $d(f \circ h) = d(f) \circ T(h)$ (immediate from
   $d = \pr_2 \circ T$ and functoriality) gives
   $d(h_m) = d(g_N) \circ T(T_2(\evl_m))$, where $T(T_2(\evl_m))$ is
   identified with $T^2(\evl_m) \times T^2(\evl_m)$ on
   $T(T_2\X) \cong T^2\X \times_{T\X} T^2\X$. Substituting the metric
   compatibility condition \eqref{eq:metric-compatibility} for $(N, g_N, K_N)$,
   \begin{align*}
     d(h_m)(W_1, W_2)
       = g_N\bigl(K_N T^2(\evl_m) W_1,\; \pi_{TN} T^2(\evl_m) W_2\bigr)
       + g_N\bigl(\pi_{TN} T^2(\evl_m) W_1,\; K_N T^2(\evl_m) W_2\bigr).
   \end{align*}
   By Convention~\ref{conv:hereditarily-tc}, $T^2(\evl_m)$ is the evaluation
   of $C^\infty(M, T^2N)$ at $m$, so
   $K_N \circ T^2(\evl_m) = T(\evl_m) \circ K_{\X}$ and
   $\pi_{TN} \circ T^2(\evl_m) = T(\evl_m) \circ \pi_{T\X}$. Hence
   \begin{align*}
     d(h_m)(W_1, W_2)
       = h_m\bigl(K_{\X}(W_1),\, \pi_{T\X}(W_2)\bigr)
       + h_m\bigl(\pi_{T\X}(W_1),\, K_{\X}(W_2)\bigr),
   \end{align*}
   and integrating over $M$ via \eqref{eq:diff-under-integral},
   \begin{align*}
     d(g_{\mathit{map}})
       = +_{\R} \circ
         \langle g_{\mathit{map}} \circ (K_{\X} \times_{\pi} \pi_{T\X}),\;
                g_{\mathit{map}} \circ (\pi_{T\X} \times_{\pi} K_{\X}) \rangle,
   \end{align*}
   which is \eqref{eq:metric-compatibility} on $\X$.
\end{proof}

\subsection{Geodesics on Mapping Spaces}\label{subsec:mapping-geodesics}

We keep the hypotheses of \S\ref{subsec:mapping-lc}. For a path
$\gamma \colon \R \to \X$ and $m \in M$, write
$\gamma_m := \evl_m \circ \gamma \colon \R \to N$, a path on $N$.

\begin{prop}\label{prop:mapping-space-geodesic}
  A path $\gamma \colon \R \to \X$ is a geodesic for $K_{\X}$ if and only if
  $\gamma_m$ is a geodesic on $N$ for $K_N$ for every $m \in M$.
\end{prop}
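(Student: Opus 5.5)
The plan is to compute $\nabla^{K_{\X}}_\partial \dot\gamma$ pointwise in $m \in M$ and then use that a map into $C^\infty(M,TN)$ is determined by its evaluations.

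First, identify the velocity field of $\gamma_m$ with the evaluation of the velocity field of $\gamma$. Since $\gamma_m = \evl_m \circ \gamma$, functoriality of $T$ gives
\begin{align*}
  \dot\gamma_m = T(\evl_m) \circ T(\gamma) \circ \partial = T(\evl_m) \circ \dot\gamma .
\end{align*}
Under Convention~\ref{conv:hereditarily-tc}, this says that $\dot\gamma_m(t) = \dot\gamma(t)(m)$ when $\dot\gamma(t)$ is viewed as an element of $C^\infty(M,TN)$.

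Next, apply $T$ once more. We get
\begin{align*}
  T(\dot\gamma_m)\circ\partial = T^2(\evl_m)\circ T(\dot\gamma)\circ\partial .
\end{align*}
By Convention~\ref{conv:hereditarily-tc}, $T^2(\evl_m)$ is the evaluation of $C^\infty(M,T^2N)$ at $m$. This is where the tangent-commuting hypothesis for $(M,TN)$ is used.

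Now use $K_{\X}(u) = K_N \circ u$ from Proposition~\ref{prop:mapping-space-connection}, which means $T(\evl_m)\circ K_{\X} = K_N \circ T^2(\evl_m)$. Hence
\begin{align*}
  T(\evl_m)\circ \nabla^{K_{\X}}_\partial\dot\gamma
  = K_N \circ T^2(\evl_m)\circ T(\dot\gamma)\circ\partial
  = \nabla^{K_N}_\partial \dot\gamma_m .
\end{align*}
On the other side, $T(\evl_m)\circ 0_{\X}\circ\gamma = 0_N\circ\evl_m\circ\gamma = 0_N\circ\gamma_m$ by naturality of $0$.

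It remains to conclude. A point of $T\X \cong C^\infty(M,TN)$ is determined by its values at all $m$, because $\Psi_{M,N}$ is a bijection. Therefore $\nabla^{K_{\X}}_\partial\dot\gamma = 0_{\X}\circ\gamma$ holds if and only if, for every $m$ and $t$, $\nabla^{K_N}_\partial\dot\gamma_m(t) = 0_N(\gamma_m(t))$. This gives both directions of the equivalence at once.

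The main obstacle I expect is bookkeeping rather than substance. One must check that the identification $T^2\X \cong C^\infty(M,T^2N)$ really intertwines $T^2(\evl_m)$ with evaluation at $m$, i.e.\ that $T(\Psi_{M,N})$ composed with $\Psi_{M,TN}$ is compatible with $T(T(\evl_m))$. I would verify this directly from the definition $\Psi(v)(m) = T(\evl_m)(v)$, using $\evl_m^{TN}\circ\Psi_{M,N} = T(\evl_m)$ and applying $T$. This is the statement already recorded after Definition~\ref{def:tangent-commuting}, so I would simply invoke it.
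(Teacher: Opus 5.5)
Your proposal is correct and follows essentially the same route as the paper's proof. Both compute $T(\evl_m)\circ\nabla^{K_{\X}}_\partial\dot\gamma = \nabla^{K_N}_\partial\dot\gamma_m$ using $T(\evl_m)\circ K_{\X} = K_N\circ T^2(\evl_m)$, the identity $\dot\gamma_m = T(\evl_m)\circ\dot\gamma$, and naturality of the zero section, and then conclude by joint injectivity of the family $(T(\evl_m))_{m\in M}$, which comes from $\Psi_{M,N}$.
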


\begin{proof}
  By naturality of the identifications, for every $m \in M$,
  \begin{align*}
    T(\evl_m)\bigl(\nabla^{K_{\X}}_\partial \dot\gamma\bigr)
      &= T(\evl_m) \circ K_{\X} \circ T(\dot\gamma) \circ \partial
       = K_N \circ T^2(\evl_m) \circ T(\dot\gamma) \circ \partial \\
      &= K_N \circ T\bigl(T(\evl_m) \circ \dot\gamma\bigr) \circ \partial
       = K_N \circ T(\dot\gamma_m) \circ \partial
       = \nabla^{K_N}_\partial \dot\gamma_m,
  \end{align*}
  using $T(\evl_m) \circ \dot\gamma
  = T(\evl_m) \circ T(\gamma) \circ \partial = T(\gamma_m) \circ \partial
  = \dot\gamma_m$, and likewise
  $T(\evl_m) \circ 0_{\X} \circ \gamma = 0_N \circ \gamma_m$.
  Hence, if $\gamma$ is a geodesic, applying $T(\evl_m)$ to
  $\nabla^{K_{\X}}_\partial \dot\gamma = 0_{\X} \circ \gamma$ shows that each
  $\gamma_m$ is a geodesic. Conversely, if every $\gamma_m$ is a geodesic,
  then $T(\evl_m) \circ \nabla^{K_{\X}}_\partial \dot\gamma
  = T(\evl_m) \circ 0_{\X} \circ \gamma$ for every $m$; since the family
  $(T(\evl_m))_{m \in M}$ corresponds under $\Psi_{M,N}$ to the evaluations of
  $C^\infty(M, TN)$, it is jointly injective, so
  $\nabla^{K_{\X}}_\partial \dot\gamma = 0_{\X} \circ \gamma$.
\end{proof}

The geodesic flow, and hence the existence theory of
\S\ref{subsec:geodesic-existence}, also transfers pointwise.

\begin{prop}\label{prop:mapping-space-flow}
  Suppose $N \in \mathsf{Elast}_{\mathrm{curve}}$ and let $(K_N, H_N)$ be a full
  connection on $N$ whose geodesic spray $S_N$ is complete, with geodesic flow
  $\Phi_N \colon \R \times TN \to N$. Then the geodesic spray $S_{\X}$ of
  $(K_{\X}, H_{\X})$ is complete, with geodesic flow
  \begin{align*}
    \Phi_{\X} \colon \R \times T\X \to \X,
    \qquad
    \Phi_{\X}(s, v)(m) := \Phi_N\bigl(s,\, T(\evl_m)(v)\bigr).
  \end{align*}
  Moreover, for every $v_0 \in T\X$ there is a unique geodesic
  $\gamma \colon \R \to \X$ with $\dot\gamma(0) = v_0$, namely
  $\gamma = \Phi_{\X} \circ \iota_{v_0}$.
\end{prop}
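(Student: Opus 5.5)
We work throughout under the identifications $T^k\X \cong C^\infty(M, T^kN)$ of Convention~\ref{conv:hereditarily-tc}. The argument has three steps: show $\Phi_{\X}$ is smooth, check that it solves the second-order system $(\X, S_{\X}, \id_{T\X})$, and then get existence and uniqueness of geodesics from the pointwise characterisation of Proposition~\ref{prop:mapping-space-geodesic}.

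\emph{Smoothness.} Write $\Phi_{\X}$ as the adjoint of
\[
  \R \times C^\infty(M,TN) \times M \to N, \qquad (s,v,m) \mapsto \Phi_N(s, v(m)).
\]
This map is the composite of $\Phi_N$ with the smooth evaluation map $C^\infty(M,TN)\times M \to TN$. Cartesian closedness of $\mathsf{Diff}$ then makes $\Phi_{\X}$ smooth. In the same way,
\[
  S_{\X} = H_{\X}\circ\langle \id,\id\rangle = C^\infty(M, S_N)
\]
by Proposition~\ref{prop:mapping-space-connection}.

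\emph{Solution property.} The central claim is that the derivatives of $\Phi_{\X}$ are computed pointwise:
\[
  \Phi_{\X}^{(k)}(s,v)(m) = \Phi_N^{(k)}\bigl(s, v(m)\bigr), \qquad k=1,2,
\]
that is, $T^k(\evl_m)\circ \Phi_{\X}^{(k)} = \Phi_N^{(k)}\circ(\id_\R\times T(\evl_m))$. For this we use the naturality
\[
  \evl_m\circ\Phi_{\X} = \Phi_N \circ (\id_\R \times T(\evl_m)),
\]
apply $T$, and use functoriality together with $T(\evl_m)\circ 0_{T\X} = 0_{TN}\circ T(\evl_m)$. This gives
\[
  T(\evl_m)\circ \Phi_{\X}^{(1)} = \Phi_N^{(1)}\circ(\id\times T(\evl_m)),
\]
and iterating gives the second-order statement. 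Once this is established, the initial condition $\Phi_{\X}^{(1)}(0,v)=v$ and the differential condition $\Phi_{\X}^{(2)} = S_{\X}\circ\Phi_{\X}^{(1)}$ follow from the corresponding identities for $\Phi_N$ after applying $T^k(\evl_m)$. The family $(T^k(\evl_m))_m$ is jointly injective because it corresponds to evaluations in $C^\infty(M,T^kN)$, so the pointwise identities imply the identities on $\X$. Hence $S_{\X}$ is complete with flow $\Phi_{\X}$.

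\emph{Main obstacle and the last claim.} The main obstacle is that Theorem~\ref{thm:geodesic-existence} cannot be applied directly, because we do not know whether $\X$ lies in $\mathsf{Elast}_{\mathrm{curve}}$; uniqueness must instead be proved pointwise. Existence follows verbatim from the first half of that theorem's proof: $\gamma := \Phi_{\X}\circ\iota_{v_0}$ is a geodesic with $\dot\gamma(0)=v_0$.

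For uniqueness, let $\gamma'$ be another geodesic with $\dot\gamma'(0)=v_0$. By Proposition~\ref{prop:mapping-space-geodesic}, each $\gamma'_m$ is a geodesic on $N$ with
\[
  \dot\gamma'_m(0) = T(\evl_m)(v_0) = v_0(m).
\]
Since $N\in\mathsf{Elast}_{\mathrm{curve}}$ and $S_N$ is complete, Theorem~\ref{thm:geodesic-existence} gives
\[
  \gamma'_m = \Phi_N(-, v_0(m)) = \gamma_m.
\]
Because the maps $\evl_m$ are jointly injective on $\X$, we conclude $\gamma'=\gamma$.

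The one delicate step is justifying that the intrinsic geodesic condition for $K_N$ agrees with the pointwise one. This is exactly the computation in the proof of Proposition~\ref{prop:mapping-space-geodesic}, so we can cite that result rather than redo it.
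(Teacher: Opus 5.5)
Your proposal is correct and follows the paper's proof essentially step by step. The shared steps are: smoothness of $\Phi_{\X}$ by cartesian closedness; the pointwise identities $T^k(\evl_m)\circ\Phi_{\X}^{(k)} = \Phi_N^{(k)}\circ(\id_\R\times T(\evl_m))$, obtained from $\evl_m\circ\Phi_{\X} = \Phi_N\circ(\id_\R\times T(\evl_m))$; joint injectivity of the evaluations to transfer the initial and differential conditions; and pointwise uniqueness via Proposition~\ref{prop:mapping-space-geodesic} and Theorem~\ref{thm:geodesic-existence} applied on $N$.

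The only deviation is in existence. You rerun the first half of the proof of Theorem~\ref{thm:geodesic-existence} directly on $\X$, which is legitimate since $(K_{\X},H_{\X})$ is full and that half never uses the curve-object hypothesis; the paper instead checks each $\gamma_m$ via Lemma~\ref{lem:flow-initial}. One small correction: the naturality of the zero section you need is $T^2(\evl_m)\circ 0_{T\X} = 0_{TN}\circ T(\evl_m)$, since $T(\evl_m)\circ 0_{T\X}$ does not typecheck.
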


\begin{proof}
  The map $(s, v, m) \mapsto \Phi_N(s, T(\evl_m)(v))$ is smooth, so
  $\Phi_{\X}$ is smooth by cartesian closedness. Under the identifications,
  $S_{\X} = C^\infty(M, S_N)$ and
  $\evl_m \circ \Phi_{\X} = \Phi_N \circ (\id_\R \times T(\evl_m))$;
  applying $T$ and $T^2$ to the latter and using naturality of $0$, one
  obtains
  \begin{align*}
    T(\evl_m) \circ \Phi_{\X}^{(1)}
      = \Phi_N^{(1)} \circ (\id_\R \times T(\evl_m)),
    \qquad
    T^2(\evl_m) \circ \Phi_{\X}^{(2)}
      = \Phi_N^{(2)} \circ (\id_\R \times T(\evl_m)).
  \end{align*}
  The initial and differential conditions for $\Phi_{\X}$
  (Definition~\ref{def:nth-solution}) then follow from those for $\Phi_N$ by
  the joint injectivity of the families $(T(\evl_m))_m$ and
  $(T^2(\evl_m))_m$, as in Proposition~\ref{prop:mapping-space-geodesic}.
  Hence $S_{\X}$ is complete with geodesic flow $\Phi_{\X}$.

  For the final claim, existence: as in
  Theorem~\ref{thm:geodesic-existence}, the slice
  $\gamma = \Phi_{\X} \circ \iota_{v_0}$ satisfies
  $\dot\gamma(0) = \Phi_{\X}^{(1)}(0, v_0) = v_0$, and each
  $\gamma_m = \Phi_N(\cdot, T(\evl_m)(v_0))$ is a geodesic on $N$
  (Lemma~\ref{lem:flow-initial}), so $\gamma$ is a geodesic by
  Proposition~\ref{prop:mapping-space-geodesic}. Uniqueness: if $\gamma$ and
  $\sigma$ are geodesics on $\X$ with
  $\dot\gamma(0) = \dot\sigma(0) = v_0$, then for each $m$ the paths
  $\gamma_m$ and $\sigma_m$ are geodesics on $N$ with the same initial
  velocity $T(\evl_m)(v_0)$, so $\gamma_m = \sigma_m$ by
  Theorem~\ref{thm:geodesic-existence} applied to $N$; since the
  evaluations $(\evl_m)_m$ are jointly injective, $\gamma = \sigma$.
\end{proof}

\begin{rem}\label{rem:mapping-space-lc-membership}
  Proposition~\ref{prop:mapping-space-flow} provides existence and uniqueness
  of geodesics on $\X$ without settling whether
  $\X \in \mathsf{Elast}_{\mathrm{curve}}$, i.e.\ whether $\R$ is a curve object
  for $\X$: the uniqueness above is derived pointwise from
  $N \in \mathsf{Elast}_{\mathrm{curve}}$ rather than from a curve-object
  property of $\X$. Whether $N \in \mathsf{Elast}_{\mathrm{curve}}$ implies
  $C^\infty(M, N) \in \mathsf{Elast}_{\mathrm{curve}}$ is left open.
\end{rem}

Collecting the preceding propositions, we obtain the following theorem.

\begin{thm}\label{thm:summary-of-mapping-space}
Let $M$ be a closed manifold equipped with a volume form $\vol_M$ and let $(N,g_N)$ be an elastic Riemannian diffeological space such that $(M, T^kN)$ is tangent-commuting for every $k \ge 0$. Write $\X = C^\infty(M,N)$.
\begin{enumerate}
  \item $\X$ is elastic and its tangent structure is obtained pointwise from
    that of $N$.
  \item $g_{\mathit{map}}$ is a Riemannian metric on $\X$.
  \item If $K_N$ is a vertical connection on $N$, then $K_\X$ is a vertical connection on $\X$; $K_\X$ is torsion-free (resp.\ flat, effective) whenever $K_N$ is, and its curvature is $C_{K_N}\circ(-)$.
  \item If $K_N$ is a Levi-Civita connection for $g_N$, then $K_\X$ is a Levi-Civita connection for $g_{\mathit{map}}$.
  \item If $(K_N,H_N)$ is a full connection whose geodesic spray is complete, then the geodesic spray of $(K_\X,H_\X)$ is complete. If moreover $N \in \mathsf{Elast}_{\mathrm{curve}}$, every $v_0 \in T\X$ determines a unique geodesic.
\end{enumerate}
\end{thm}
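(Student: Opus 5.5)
The theorem collects results already proved in this section, so the plan is to go through items (1)--(5) in order and cite the matching proposition each time. I only need to check that the hypotheses there are exactly the standing ones: $M$ is a closed manifold with $\vol_M$, and $(M,T^kN)$ is tangent-commuting for every $k\ge 0$ (Convention~\ref{conv:hereditarily-tc}).

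For (1), elasticity of $\X$ is part of the tangent-commuting hypothesis with $k=0$ (Definition~\ref{def:tangent-commuting}). Iterating $\Psi$ gives $T^k\X\cong C^\infty(M,T^kN)$. Under these identifications every structure map $\pi,0,\kappa,+,\tau,\lambda$ of $\X$ is $C^\infty(M,-)$ applied to the corresponding map of $N$. This uses naturality of $\Psi$ in the target, together with the fact that $C^\infty(M,-)$ preserves fibre products, being a right adjoint. Item (2) is Lemma~\ref{lem:g-map-metric}. For (3), Proposition~\ref{prop:mapping-space-connection} gives that $K_\X$ is a vertical connection and that torsion-freeness is inherited. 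Proposition~\ref{prop:mapping-space-effective} handles effectiveness, and Proposition~\ref{prop:mapping-space-curvature} gives both the curvature formula $C_{K_\X}=C_{K_N}\circ(-)$ and the flatness equivalence. Item (4) is Proposition~\ref{prop:mapping-space-levi-civita}. For this I will check that differentiation under the integral \eqref{eq:diff-under-integral} is justified by the smoothness and linearity of $\mathcal I$ on $C^\infty(M,\R)$. Compactness of $M$ is what makes $\mathcal I$ defined and smooth.

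The only real point is item (5), because Proposition~\ref{prop:mapping-space-flow} was stated assuming $N\in\mathsf{Elast}_{\mathrm{curve}}$ throughout, while the theorem separates completeness from uniqueness. My plan is to rerun the first half of that proof without the curve hypothesis. Define $\Phi_\X(s,v)(m)=\Phi_N(s,T(\evl_m)v)$. It is smooth by cartesian closedness. Applying $T$ and $T^2$ to $\evl_m\circ\Phi_\X=\Phi_N\circ(\id_\R\times T(\evl_m))$ gives the intertwining relations for $\Phi^{(1)}$ and $\Phi^{(2)}$. The initial and differential conditions then follow from joint injectivity of $(T^k(\evl_m))_m$, which is exactly injectivity of $\Psi$. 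Note that the geodesic flow of $N$ is just a chosen solution, so no uniqueness is used here. With $N\in\mathsf{Elast}_{\mathrm{curve}}$ added, existence and uniqueness of the geodesic through $v_0$ follow as in the last paragraph of Proposition~\ref{prop:mapping-space-flow}. Existence comes from the slice $\Phi_\X\circ\iota_{v_0}$ together with Proposition~\ref{prop:mapping-space-geodesic} and Lemma~\ref{lem:flow-initial}. Uniqueness comes from applying Theorem~\ref{thm:geodesic-existence} to each $\gamma_m$ on $N$ and then using joint injectivity of the evaluations.

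The step I expect to need care is the identification $S_\X=C^\infty(M,S_N)$, that is, the statement that $H_\X\circ\langle\id,\id\rangle$ corresponds to post-composition. This follows from the definition of $H_\X$ and from $T\X\times_\X T\X\cong C^\infty(M,TN\times_NTN)$.
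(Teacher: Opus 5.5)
Your proposal is correct and follows the paper's own route, which simply assembles Lemma~\ref{lem:g-map-metric} and Propositions~\ref{prop:mapping-space-connection}--\ref{prop:mapping-space-flow} under Convention~\ref{conv:hereditarily-tc}. Your extra point on item (5) is also right: the completeness half of the proof of Proposition~\ref{prop:mapping-space-flow} uses only the existence of $\Phi_N$ and the joint injectivity of the $T^k(\evl_m)$, never $N \in \mathsf{Elast}_{\mathrm{curve}}$, and this is exactly what justifies the way the theorem splits its hypotheses.
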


\section{Perspectives}\label{sec:perspectives}

The present work establishes a first framework for Riemannian geometry on
elastic diffeological spaces. Several natural directions arise from this
construction, including affine geometry on elastic spaces, Lie-theoretic
applications, and categorical formulations of Riemannian geometry; we
briefly describe some of them below. A detailed treatment of the first two
will appear in a separate paper.

\subsection{Connections on elastic vector spaces}\label{subsec:elastic-vs}

A natural next step is a systematic theory of affine connections on
tangent-stable elastic vector spaces, for which the Euclidean computations
of Section~\ref{sec:connections} serve as the finite-dimensional model. One
expects vertical connections on such spaces to be classified by
Christoffel-type operators --- bilinear correction terms added to a
canonical connection --- and, in particular, to be automatically effective,
so that vertical and full connections coincide there. A further step is
connections on diffeological manifolds modelled on such spaces; this raises
the questions of whether elasticity is a local property and of how to glue
local Christoffel data in the absence of smooth partitions of unity.

\subsection{Elastic groups}\label{subsec:elastic-group}

Another promising direction is the study of left-invariant Riemannian
metrics on elastic groups. For left-invariant vector fields the derivative
terms in the Koszul formula vanish, and one expects the geodesic equation
to reduce to an Euler--Arnold equation formulated using only the metric and
the tangent-category bracket --- avoiding the dual of the Lie algebra, the
coadjoint action, and the inertia operator that are the usual analytic
obstacles in infinite dimensions. Loop groups, which are elastic with
pointwise geodesics by the results of Section~\ref{sec:mapping-space}, and
their central extensions, such as affine Kac--Moody groups and the Virasoro
group, provide natural test cases, with geometric interpretations of
integrable equations such as KdV and Camassa--Holm as a long-term aim. Such
a theory would extend classical Lie group geometry to elastic spaces and
clarify the interaction between algebraic and geometric structures in this
setting.

\subsection{Tangent-categorical formulation}\label{subsec:tangent-core}

Since many constructions in this paper rely only on the tangent-categorical
structure, it is natural to ask over which tangent categories the theory
survives. The metric compatibility condition, the Koszul formula, and the
uniqueness of the Levi-Civita covariant derivative use only the tangent
structure, the Lie bracket, and the ring operations, and should carry over
to Cartesian tangent categories with scalar multiplication over a
commutative ring object; by contrast, positivity, the musical maps, and the
geodesic theory require additional structure --- an ordered ring, a
cotangent identification, and a curve object, respectively. Isolating the
precise axioms needed for each of these layers would reveal the categorical
essence of elastic Riemannian geometry and facilitate comparisons with
existing axiomatic frameworks.

\subsection{Further geometric problems}\label{subsec:further-problems}

Several foundational questions remain open, beginning with those of Open
Question~\ref{prob:open-questions}: the existence of a Levi-Civita
connection for a strongly non-degenerate metric, and the uniqueness of the
connection itself rather than of its covariant derivative --- the
obstruction being that the covariant derivative need not determine the
connection in the absence of local triviality. On mapping spaces, it is
open whether the curve-object property passes from the target $N$ to
$C^\infty(M, N)$. For global geometry, natural next goals are a Gauss-type
lemma showing that geodesics are locally length-minimizing, analogues of
the Hopf--Rinow theorem, and comparison results based on curvature
invariants.

\medskip
The directions outlined above indicate that elastic Riemannian geometry has
the potential to develop into a broad framework encompassing differential
geometry, Lie theory, and categorical geometry. These topics will be
investigated in future work.

\appendix
\section{Tangent categories}\label{sec:tangent-categories}

In this appendix, we summarize the basic categorical concepts regarding tangent categories used in the main text.
For detailed introductions and proofs, we refer the reader to \cite{AB, CC14, CC15, M} and the references therein.
Note that in this appendix, following convention, the objects of an abstract tangent category $\mathcal{C}$ are denoted by $X, Y, Z$.

\subsection{Bundles with algebraic structure and symmetric structures}\label{sec:first-definitions}

Consider a category $\mathcal{C}$ that has a terminal object $*$ and such that for any object $X$, the overcategory $\mathcal{C}\downarrow X$ has finite products (this implies that $\mathcal{C} \cong \mathcal{C} \downarrow *$ itself also has finite products).

\begin{defn}\label{def:bundle-abelian-groups}
  A bundle of abelian groups $p\colon A \to X$ is an abelian group object in $\mathcal{C}\downarrow X$.
\end{defn}

This data consists of the bundle projection $p\colon A \to X$, along with morphisms for addition $+\colon A \times_X A \to A$, the zero-section $0\colon X \to A$, and the inverse $\iota\colon A \to A$, satisfying the usual axioms of an abelian group (associativity, identity, commutativity, etc.). No local triviality is assumed.

\begin{defn}\label{def:bundle-morphism}
  Let $p\colon A \to X$ and $p'\colon A' \to X'$ be bundles of abelian groups with addition $+$ and $+'$, respectively.
  A morphism of bundles is a commutative diagram in the category $\mathcal{C}$ of the following form:
  $$
    \begin{tikzcd}
      A \ar[r, "\phi"] \ar[d, "p"'] & A' \ar[d, "p'"] \\
      X \ar[r, "f"'] & X'
    \end{tikzcd}
  $$
  Furthermore, if the following diagram commutes, $\phi$ is called a morphism of bundles of abelian groups:
  $$
    \begin{tikzcd}
      A \times_X A \ar[r, "\phi \times_f \phi"] \ar[d, "+"'] & A' \times_{X'} A' \ar[d, "+'"] \\
      A \ar[r, "\phi"'] & A'
    \end{tikzcd}
  $$
\end{defn}

For a commutative ring object $R$ in $\mathcal{C}$, the projection $X \times R \to X$ becomes a ring object in $\mathcal{C} \downarrow X$.
A pair consisting of a bundle of abelian groups $p\colon A \to X$ and a scalar multiplication $\kappa\colon R \times A \to A$ satisfying the axioms of a left module is called a bundle of $R$-modules.

\begin{defn}\label{def:symmetric-structure}
  Consider a covariant endofunctor $T\colon \mathcal{C} \to \mathcal{C}$ and a natural transformation $\tau\colon T^2 \to T^2$.
  Let $\tau_{12} \coloneqq \tau T$ and $\tau_{23} \coloneqq T \tau$ be the trivial extensions to natural transformations $T^3 \to T^3$.
  If $\tau \circ \tau = 1$ and the braid relation
  $\tau_{12} \circ \tau_{23} \circ \tau_{12} = \tau_{23} \circ \tau_{12} \circ \tau_{23}$
  is satisfied, then $\tau$ is called a symmetric structure.
\end{defn}

\begin{defn}\label{def:preserves-fiber-products}
  For a bundle of abelian groups $p\colon A \to X$, the covariant endofunctor $T\colon \mathcal{C} \to \mathcal{C}$ is said to preserve fibre products of $p\colon A \to X$ if the natural morphisms
  $$
    \nu_{k,X} \colon T(A \times_X \dots \times_X A)
      \to TA \times_{TX} \dots \times_{TX} TA
  $$
  are all isomorphisms for any $k \geq 1$.
\end{defn}

\subsection{Axioms of tangent categories}\label{sec:RosickysAxioms}

\begin{defn}\label{def:tangent-structure-axioms}
  A tangent structure on a category $\mathcal{C}$ consists of a covariant endofunctor $T\colon \mathcal{C} \to \mathcal{C}$ and the following natural transformations:
  \makeatletter
  $$
  \begin{alignedat}{2}
    & \text{projection}\quad &  \pi &\colon T \to 1 \\
    &\text{zero-section}\quad & 0 &\colon 1 \to T \\
    & \text{addition}\quad & +&\colon T_2 \to T \\
    &\text{vertical lift}\quad & \lambda&\colon T \to T^2 \\
    &\text{symmetric structure}\quad & \tau &\colon T^2 \to T^2.
  \end{alignedat}
  $$
  \makeatother
  The axioms they must satisfy are as follows:
  \begin{itemize}
    \item The projection $\pi\colon T \to 1$ is a bundle of abelian groups over $1$ with zero-section $0$ and addition $+$.
    \item All pullbacks $T_k \coloneqq T \times_1 \dots \times_1 T$ exist and are preserved pointwise by $T$.
    \item The natural transformation $\tau$ is a symmetric structure and a morphism of bundles of abelian groups.
    \item The following diagrams commute, and in particular, the left diagram is a morphism of bundles of abelian groups:
      $$
        \begin{tikzcd}
          T  \ar[r, "\lambda"] \ar[d, "\pi"'] & T^2 \ar[d, "\pi T"] \\
          1  \ar[r, "0"'] & T
        \end{tikzcd}
        \qquad
        \begin{tikzcd}
          T \ar[r, "\lambda"] \ar[d, "\lambda"'] & T^2   \ar[d, "\lambda T"] \\
          T^2  \ar[r, "T\lambda"'] & T^3
        \end{tikzcd}
      $$
    \item The following diagrams commute with respect to the vertical lift and the symmetric structure:
      $$
        \begin{tikzcd}
          & T \ar[dl, "\lambda"'] \ar[dr, "\lambda"] & \\
          T^2  \ar[rr, "\tau"'] & & T^2
        \end{tikzcd}
        \qquad
        \begin{tikzcd}
          T^2  \ar[r, "T\lambda"] \ar[d, "\tau"'] & T^3  \ar[r, "\tau T"] & T^3 \ar[d, "T\tau"] \\
          T^2  \ar[r, "\lambda T"'] & T^3 \ar[r, "\tau"'] & T^3
        \end{tikzcd}
      $$
    \item The diagram formed by the following natural transformation $\lambda_2$ is a pointwise pullback:
      $$
        \begin{tikzcd}
          T_2  \ar[r, "T0 \times_0 \lambda"] \ar[d, "\pi \circ \pr_1"']
            & T_2T \ar[r, "+_T"]
            & T^2 \ar[r, "\tau"]
            & T^2 \ar[d, "T\pi"] \\
          1  \ar[rrr, "0"'] & & & T
        \end{tikzcd}
      $$
  \end{itemize}
\end{defn}

\subsection{Cartesian tangent categories and scalar multiplication}\label{sec:CatTanScal}

Suppose the tangent category $\mathcal{C}$ has finite products.

\begin{defn}\label{def:cartesian-tangent-category}
  A tangent category $(\mathcal{C}, T)$ is called a Cartesian tangent category if $T* \cong *$ and the natural morphisms
  $$
    \chi_{X,Y}\colon T(X \times Y) \longrightarrow TX \times TY
  $$
  are isomorphisms for any objects $X, Y$.
\end{defn}

In a Cartesian tangent category, for a morphism $f\colon X \times Y \to Z$, the partial tangent morphisms with respect to each variable can be defined as follows:
$$
\begin{aligned}
  T_{(1)} f \colon TX \times Y
    &\xrightarrow{\id_{TX} \times 0_Y} TX \times TY
     \xrightarrow{\chi_{X,Y}^{-1}} T(X \times Y)
     \xrightarrow{Tf} TZ, \\
  T_{(2)} f \colon X \times TY
    &\xrightarrow{0_X \times \id_{TY}} TX \times TY
     \xrightarrow{\chi_{X,Y}^{-1}} T(X \times Y)
     \xrightarrow{Tf} TZ.
\end{aligned}
$$

\begin{prop}[{\cite[Proposition 2.10]{CC14}}]\label{prop:decomposition-tangent-morphism}
  For a morphism $f\colon X \times Y \to Z$ in a Cartesian tangent category $(\mathcal{C}, T)$, the following holds:
  $$
  \begin{aligned}
    Tf = T_{(1)} f \circ (\id_{TX} \times \pi_Y) \circ \chi_{X,Y}
         +_{TZ} T_{(2)} f \circ (\pi_X \times \id_Y) \circ \chi_{X,Y}.
  \end{aligned}
  $$
\end{prop}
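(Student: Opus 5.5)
This decomposition formula is a structural fact about Cartesian tangent categories, so I will argue purely from the axioms. I work in the bundle of abelian groups $\pi_Z\colon TZ\to Z$, where $+_{TZ}$ denotes the fibrewise addition $+_Z$.

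The plan is to decompose the identity of $TX\times TY$ (equivalently of $T(X\times Y)$ via $\chi$) as a fibrewise sum, and then push the sum through $Tf$ using additivity of $Tf$. First observe that, as maps $TX\times TY\to TX\times TY$,
\[
\id_{TX\times TY} \;=\; (\id_{TX}\times 0_Y\pi_Y) \;+\; (0_X\pi_X\times \id_{TY}),
\]
where the sum is taken in the bundle $\pi_X\times\pi_Y\colon TX\times TY\to X\times Y$. This bundle carries the product addition $+_X\times +_Y$. Componentwise the identity reduces to $v=v+0_X\pi_X(v)$ and $w=0_Y\pi_Y(w)+w$, which are the unit laws of the bundles of abelian groups $TX$ and $TY$.

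Next, transport this to $T(X\times Y)$ via $\chi_{X,Y}^{-1}$. I need that $\chi_{X,Y}$ is an isomorphism of bundles of abelian groups over $X\times Y$, i.e.\ that $+_{X\times Y}$ corresponds to $+_X\times+_Y$ under $\chi$. This follows from naturality of $+$ applied to the two projections $X\times Y\to X$ and $X\times Y\to Y$, together with the fact that $T_2$ of a product is the product of the $T_2$'s, since $T$ preserves the pullbacks $T_2$ and products commute with pullbacks.

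Then apply $Tf$. By naturality of $+$, $Tf$ is additive from $T(X\times Y)\to X\times Y$ to $TZ\to Z$ over $f$, that is, $Tf\circ +_{X\times Y} = +_Z\circ (Tf\times_f Tf)$. Hence
\[
Tf = Tf\chi^{-1}(\id\times 0_Y\pi_Y)\chi \;+_{TZ}\; Tf\chi^{-1}(0_X\pi_X\times\id)\chi .
\]
Finally, rewrite each summand as in the statement. We have $(\id_{TX}\times 0_Y\pi_Y)=(\id_{TX}\times 0_Y)\circ(\id_{TX}\times\pi_Y)$, so the first term is $T_{(1)}f\circ(\id_{TX}\times\pi_Y)\circ\chi_{X,Y}$ by the definition of the partial tangent. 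The second term is handled symmetrically.

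The main obstacle is the bookkeeping in the step identifying $\chi$ as additive. There one must check that the fibre product $T(X\times Y)\times_{X\times Y}T(X\times Y)$ is identified with $T_2X\times T_2Y$, and that the summands lie over the same base point, so that the sum is defined. Both are straightforward consequences of $\pi$ and $0$ being natural and of $\pi\circ 0=\id$, so the argument is short once this identification is stated carefully.
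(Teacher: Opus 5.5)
Your argument is correct. The paper gives no proof of its own, only the reference \cite[Proposition~2.10]{CC14}, and your route is the standard argument for this fact: split $\id_{TX\times TY}=(\id_{TX}\times 0_Y\pi_Y)+(0_X\pi_X\times\id_{TY})$, transport along $\chi_{X,Y}$, use that $Tf$ is additive by naturality of $+$, and recognise the two summands as $T_{(1)}f$ and $T_{(2)}f$. One attribution is off: $T(X\times Y)\times_{X\times Y}T(X\times Y)\cong T_2X\times T_2Y$ follows from $\chi_{X,Y}$ being an isomorphism over $X\times Y$ (that is, $T$ preserving products) together with limits commuting with limits, not from $T$ preserving the pullbacks $T_2$. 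In fact you do not need this identification at all, since additivity of $\chi_{X,Y}$ (naturality of $+$ at the product projections) plus invertibility already makes $\chi_{X,Y}^{-1}$ additive.
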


Fix a commutative ring object $R$ in a Cartesian tangent category $\mathcal{C}$.
The natural transformation $1 \times R \to 1$ inherits the structure of a ring object in the category $\operatorname{End}(\mathcal{C}) \downarrow 1$ from $R$.
An $R$-module structure on $T$ is a $(1 \times R \to 1)$-module structure on $\pi$.
This is given by a natural morphism $\kappa\colon R \times T \to T$.
Furthermore, if the natural morphism $TR \to R \times R$ is an isomorphism of $R$-modules, $R$ is called tangent-stable.

\begin{defn}\label{def:scalar-multiplication}
  A Cartesian tangent category with scalar $R$-multiplication is a tuple consisting of a Cartesian tangent category $\mathcal{C}$, a commutative ring object $R$, and a tangent-stable $R$-module structure $\kappa$ on $T$ that makes the following diagrams commute:
  $$
    \begin{tikzcd}[column sep=large]
      R \times TX \ar[r, "\id_R \times \lambda_X"] \ar[d, "\kappa_X"']
        & R \times T^2 X \ar[d, "\kappa_{TX}"] \\
      TX \ar[r, "\lambda_X"'] & T^2 X
    \end{tikzcd}
  $$
  $$
    \begin{tikzcd}[column sep=6em]
      TR \times TX \ar[r, "T_{(1)}\kappa_X"] \ar[d, "{(\pi_R, \mathsf{Val}_R) \times \id_{TX}}"', "\cong"]
        &[4em] T^2X \\
      R \times R \times TX
        \ar[r, "{(\kappa_X \circ (\pr_1,\pr_3),\, \kappa_X \circ (\pr_2,\pr_3))}"']
        & T_2 X \ar[u, "\lambda_{2,X}"']
    \end{tikzcd}
  $$
  $$
    \begin{tikzcd}[column sep=3em]
      R \times T^2 X \ar[r, "T_{(2)}\kappa_X"] \ar[d, "\id_R \times \tau_X"'] & T^2 X \\
      R \times T^2 X \ar[r, "\kappa_{TX}"'] & T^2 X \ar[u, "\tau_X"']
    \end{tikzcd}
  $$
  Here, $\mathsf{Val}_R \colon TR \to R$ is the projection onto the second component of $TR \cong R \times R$.
\end{defn}

\subsection{Differential bundles and their morphisms}

The concept of a \textbf{differential bundle} in a tangent category was introduced in \cite{CC18}.
A differential bundle over an object $M$ is a tuple $\mathsf{q} = (q,\, z_q,\, \lambda_q)$ consisting of a morphism $q \colon E \to M$ (projection), $z_q \colon M \to E$ (zero-section), and a lift $\lambda_q \colon E \to TE$, satisfying various conditions compatible with the axioms of a tangent category (see \cite{CC18} for details).
The tangent bundle $\pi_M \colon TM \to M$ is a prototypical example of a differential bundle.

\begin{defn}[\cite{CC18}]\label{def:differential-bundle-morphism}
Let $\mathsf{q} = (q,\, z_q,\, \lambda_q)$ and $\mathsf{q}' = (q',\, z_{q'},\, \lambda_{q'})$ be differential bundles over a tangent category $\mathcal{C}$.
\begin{enumerate}
  \item A \textbf{bundle morphism} between them is a pair of morphisms $f_1 \colon E \to E'$ and $f_0 \colon M \to M'$ satisfying $q' \circ f_1 = f_0 \circ q$:
    $$
      \begin{tikzcd}
        E \ar[r, "f_1"] \ar[d, "q"'] & E' \ar[d, "q'"] \\
        M \ar[r, "f_0"'] & M'
      \end{tikzcd}
    $$
  \item A bundle morphism $(f_1, f_0)$ is said to be \textbf{linear} if it additionally satisfies $\lambda_{q'} \circ f_1 = T(f_1) \circ \lambda_q$:
    $$
      \begin{tikzcd}
        E  \ar[r, "f_1"] \ar[d, "\lambda_q"'] & E' \ar[d, "\lambda_{q'}"] \\
        TE \ar[r, "T(f_1)"'] & TE'
      \end{tikzcd}
    $$
\end{enumerate}
\end{defn}

\begin{prop}[{\cite[Proposition~2.16]{CC18}}]\label{prop:linear-bundle-additive}
A linear bundle morphism is automatically \textbf{additive}.
That is, it preserves the addition and the zero-section of the differential bundle $\mathsf{q}$.
\end{prop}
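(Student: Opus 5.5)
The statement to prove is Proposition~\ref{prop:linear-bundle-additive}: a linear bundle morphism $(f_1,f_0)$ between differential bundles preserves addition and zero section. I would argue through the universal property of the lift, which is the defining feature of a differential bundle in \cite{CC18}. Two facts from that definition are used. First, the lift $\lambda_q$ is compatible with the additive structure: $(\lambda_q, 0_M)$ is a morphism of additive bundles from $q$ to $T(q)$. Second, the square formed by $\lambda_q$, $q$, $z_q$ and $T(q)$ is a pullback that is preserved by powers of $T$, so $\lambda_q$ is a monomorphism. The plan is to transport the comparison of $f_1\circ +_q$ with $+_{q'}\circ(f_1\times_{f_0} f_1)$ into $TE'$, where $T(f_1)$ is additive by naturality of the tangent addition.

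\textbf{Zero section.} I need $f_1\circ z_q = z_{q'}\circ f_0$. Apply $\lambda_{q'}$ and use linearity to get $\lambda_{q'} f_1 z_q = T(f_1)\lambda_q z_q$. The axiom $\lambda_q z_q = 0_E z_q$ gives $T(f_1)\,0_E\,z_q = 0_{E'} f_1 z_q$, by naturality of $0$. On the other side, $\lambda_{q'} z_{q'} f_0 = 0_{E'} z_{q'} f_0$. So it suffices to compare $f_1 z_q$ with $z_{q'} f_0$ after composing with $0_{E'}$, and $0_{E'}$ is split mono since $\pi\circ 0=\id$. Thus I must first establish $\lambda_{q'}\circ f_1\circ z_q = 0_{E'}\circ f_1\circ z_q$. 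This holds because $\lambda_{q'}$ applied to any point of the form $T(f_1)(\text{zero})$ is a zero. More precisely, the universality pullback identifies elements $x$ with $\lambda_{q'}x = 0_{E'}x$ as exactly the zero-section points. I will use that characterization: $x$ factors through $z_{q'}$ precisely when $\lambda_{q'}x$ lies over $z_{q'}$ in both components, which follows from the pullback of $\lambda$ along $z$.

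\textbf{Addition.} Apply the monomorphism $\lambda_{q'}$. For the first composite, $\lambda_{q'}\circ f_1\circ +_q = T(f_1)\circ\lambda_q\circ +_q = T(f_1)\circ +_{TE}\circ(\lambda_q\times\lambda_q)$, using additivity of $\lambda_q$ for the $\pi_E$-bundle structure on $TE$. Naturality of the tangent addition $+$ then turns this into $+_{TE'}\circ(T(f_1)\lambda_q\times T(f_1)\lambda_q)$. By linearity this equals $+_{TE'}\circ(\lambda_{q'}f_1\times\lambda_{q'}f_1)$, and by additivity of $\lambda_{q'}$ it equals $\lambda_{q'}\circ +_{q'}\circ(f_1\times_{f_0} f_1)$. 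Cancelling the mono $\lambda_{q'}$ gives the claim.

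\textbf{Main obstacle.} The key step is justifying that $\lambda_q$ is additive from $+_q$ to $+_{TE}$, that is, for the $\pi_E$-structure rather than the $T(q)$-structure, and checking that the fibre products match. This is part of the differential-bundle axioms in \cite{CC18}, and I would cite it directly. I would also check the fibre-product bookkeeping: $(f_1\times_{f_0}f_1)$ must land in $E'\times_{M'}E'$, which holds because $q' f_1 = f_0 q$.
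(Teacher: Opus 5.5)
The paper does not prove this statement; it quotes it from \cite[Proposition~2.16]{CC18}. Your argument therefore has to stand on the differential-bundle axioms of \cite{CC18} alone.

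\textbf{Addition.} This half is sound, granting that $\lambda_{q'}$ is monic. The chain from $\lambda_{q'}\circ f_1\circ +_q$ to $\lambda_{q'}\circ +_{q'}\circ(f_1\times_{f_0}f_1)$ correctly uses linearity, naturality of the tangent addition, and the axiom that $(\lambda_q,z_q)$ is an additive bundle morphism into $\pi_E\colon TE\to E$. (In the paper's notation that addition is $+_E$, not $+_{TE}$.)

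\textbf{Zero section: the gap.} The reduction ``compose with the split mono $0_{E'}$'' is circular, since it is equivalent to the goal. Your two computations $\lambda_{q'}(f_1z_q)=0_{E'}(f_1z_q)$ and $\lambda_{q'}(z_{q'}f_0)=0_{E'}(z_{q'}f_0)$ do not by themselves relate $f_1z_q$ to $z_{q'}f_0$. Everything rests on the claim that $\lambda_{q'}x=0_{E'}x$ forces $x=z_{q'}q'x$, and the justification you give does not work:
\begin{itemize}
\item One has $\pi_{E'}\lambda_{q'}=z_{q'}q'$ and $T(q')\lambda_{q'}=0_{M'}q'$ for \emph{every} $x$, so ``lies over $z_{q'}$ in both components'' is vacuous.
\item There is no ``pullback of $\lambda$ along $z$'' among the axioms.
\end{itemize}
Relatedly, you misstate universality: the commuting square $T(q)\circ\lambda_q=0_M\circ q$ is not a pullback. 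For $E=T\R^n$, the pullback of $0$ along $T(\pi)$ is $\{(u,v_0,0,v_{01})\}\cong T_2\R^n$, not $T\R^n$. The axiom in \cite{CC18} says instead that
\[
  \mu \coloneqq T(+_q)\circ\langle \lambda_q\circ\pr_1,\ 0_E\circ\pr_2\rangle \colon E\times_M E\to TE
\]
is the pullback of $0_M$ along $T(q)$. Hence $\mu$ is monic, being a pullback of a split mono.

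\textbf{The fix.} Route both $\lambda_{q'}$ and $0_{E'}$ through $\mu'$. Using $\lambda_{q'}z_{q'}=0_{E'}z_{q'}$, $T(q')\lambda_{q'}=0_{M'}q'$, naturality of $0$, preservation of $E'\times_{M'}E'$ by $T$, and the unit law, one checks
\[
  \lambda_{q'}=\mu'\circ\langle \id,\ z_{q'}q'\rangle,
  \qquad
  0_{E'}=\mu'\circ\langle z_{q'}q',\ \id\rangle .
\]
The first identity is what actually shows $\lambda_{q'}$ is monic, so it also supplies the missing justification for your addition half. Now take $x=f_1z_q$, which lies over $f_0$. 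The equation $\lambda_{q'}x=0_{E'}x$ becomes $\mu'\langle x,z_{q'}f_0\rangle=\mu'\langle z_{q'}f_0,x\rangle$, and cancelling the monomorphism $\mu'$ gives $f_1z_q=z_{q'}f_0$.
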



\begin{thebibliography}{99}
\bibitem[AB]{AB} L. Aintablian, C. Blohmann, Differentiable groupoid objects and their
  abstract Lie algebroids, Appl. Categ. Structures 33, no. 5, Paper No. 33, 97 pp., 2025
\bibitem[Bloh23]{B23} C. Blohmann, Elastic diffeological spaces, Recent Advances in Diffeologies and Their Applications, Contemp. Math. 794, AMS, 2024, pp. 49–86.
\bibitem[Bloh24]{B24} C. Blohmann, Lagrangian Field Theory, Diffeology, Variational Cohomology,
  Multisymplectic Geometry, 2024. {\tt https://people.mpim-bonn.mpg.de/blohmann/LFT/}
\bibitem[CC14]{CC14} J.R.B. Cockett and G.S.H. Cruttwell. Differential structure, tangent
  structure, and SDG. Appl. Categ. Structures, {\bf 22} (2014), 331--417.
\bibitem[CC15]{CC15} J.R.B. Cockett and G.S.H. Cruttwell. The Jacobi identity for tangent
  categories. Cah. Topol. G\'eom. Diff\'er. Cat\'eg. {\bf 56} (2015), 301--316.
\bibitem[CC17]{CC17} J.R.B. Cockett and G.S.H. Cruttwell, Connections in tangent categories,
  Theory and Applications of Categories, {\bf 32} (2017), 835--888.
\bibitem[CC18]{CC18} J.R.B. Cockett and G.S.H. Cruttwell, Differential bundles and fibrations
  for tangent categories, Cah. Topol. G\'eom. Diff\'er. Cat\'eg. 59.1 (2018), pp.\ 10--92.
\bibitem[CCL]{CCL} J.R.B. Cockett, G.S.H. Cruttwell, J.S.P. Lemay, Differential equations in
  a tangent category I: Complete vector fields, flows, and exponentials,
  Applied Categorical Structures 29 (5), 773--825, 2021.
\bibitem[GW]{GW} N. Goldammer, K. Welker, Towards optimization techniques on diffeological spaces by generalizing Riemannian concepts, Appl. Math. Optim. 93, No. 1, Paper No. 5, 31 p. (2026). 
\bibitem[PIZ12]{PIZ12} P. Iglesias-Zemmour, Diffeology, Mathematical Surveys and Monographs,
  185, AMS, Providence, 2012.
\bibitem[PIZ25]{PIZ25} P. Iglesias-Zemmour, Lectures on diffeology,
  Beijing World Publishing Corporation, 2025.
\bibitem[Kihara]{Kihara} H. Kihara, Smooth homotopy of infinite-dimensional $C^\infty$-manifolds,
  Memoirs of the American Mathematical Society 1436. Providence, RI: American Mathematical Society (AMS), 2023.
\bibitem[KMS]{KMS} I. Kol\'a\v{r}, P. W. Michor, J. Slov\'ak, Natural Operations in Differential Geometry, Springer-Verlag Berlin, 1993.
\bibitem[KSS]{KSS} K. Kuribayashi, K. Sakai, Y. Shiobara, Towards Riemannian diffeology,
  Proc. Roy. Soc. Edinburgh Sect. A, published online 2025, pp.\ 1--30.
  \texttt{doi:10.1017/prm.2025.10114}.
\bibitem[LW]{LW} L.B.B. Lucyshyn-Wright, On the geometric notion of connection and its
  expression in tangent categories. Theory and Applications of Categories 33 (2018), 832--866.
\bibitem[Miya]{M} D. Miyamoto, Lie algebras of quotient groups, preprint, 2025.
  {\tt https://arxiv.org/abs/2502.10260v2}.
\bibitem[Ros]{R} J. Rosick\'y. Abstract tangent functors. Diagrammes, 12: JR1--JR11, 1984.
\bibitem[Taho]{Taho} M. Taho, Tangent spaces of diffeological spaces and their variants,
Topology and its Applications, Volume 381, 2026, {\tt https://doi.org/10.1016/j.topol.2026.109741}.
\bibitem[Vin]{V} M. Vincent, Diffeological differential geometry, MSc. Copenhagen, DK: University of Copenhagen, 2008.  
\end{thebibliography}
\end{document}